\documentclass[reqno]{amsart}

\textwidth 16cm \oddsidemargin -0.01cm
\evensidemargin -0.01cm
\addtolength{\textheight}{3cm} \addtolength{\topmargin}{-2cm}

\usepackage[USenglish]{babel}
\usepackage{amsfonts}
\usepackage{esint}
\usepackage[all]{xy}

\usepackage{amssymb,latexsym}
\usepackage{amsmath}
\usepackage{amsthm}
\usepackage{graphicx}
\usepackage{titletoc}
\numberwithin{equation}{section}

\renewcommand{\a }{\alpha }

\newcommand{\D }{\Delta }

\renewcommand{\l }{\lambda }

\newcommand{\Sg }{\Sigma}

\newcommand{\be}{\begin{equation}}
\newcommand{\ee}{\end{equation}}

\newcommand{\N}{\mathbb{N}}
\newcommand{\no}{\noindent}

\newcommand{\T}{\mathbb{T}}

\newtheorem{theorem}{Theorem}[section]
\newtheorem{proposition}[theorem]{Proposition}
\newtheorem{lemma}[theorem]{Lemma}
\newtheorem{corollary}[theorem]{Corollary}

\theoremstyle{definition}

\newtheorem{remark}{Remark}

\def\l{\langle}
\def\r{\rangle}

\def\XXint#1#2#3{{\setbox0=\hbox{$#1{#2#3}{\int}$}
     \vcenter{\hbox{$#2#3$}}\kern-.5\wd0}}

\def\l{\langle}
\def\r{\rangle}

\begin{document}
\title[The topological degree of the Mean field equation]{On the Topological degree of the Mean field equation \\with two parameters}

\author{ Aleks Jevnikar}

\address{ Aleks Jevnikar,~University of Rome `Tor Vergata', Via della Ricerca Scientifica 1, 00133 Roma, Italy}
\email{jevnikar@mat.uniroma2.it}

\author{Juncheng Wei}
\address{ Juncheng ~Wei,~Department of Mathematics, University of British Columbia,
Vancouver, BC V6T 1Z2, Canada}
\email{jcwei@math.ubc.ca}

\author{Wen Yang}
\address{ Wen ~Yang,~Center for Advanced Study in Theoretical Sciences (CASTS), National Taiwan University, Taipei 10617, Taiwan}
\email{math.yangwen@gmail.com}

\thanks{A.J. is partially supported by the PRIN project {\em Variational and perturbative aspects of nonlinear differential problems}. J.W. and W.Y. are partially supported by the NSERC of Canada. Part of the paper was written when A.J. was visiting the Mathematics Department at the University of British Columbia. He would like to thank the institute for the hospitality and for the financial support.}

\keywords{Geometric PDEs, Mean field equation, Topological degree}

\subjclass[2000]{ 35J20, 35J60, 35R01.}

\begin{abstract}
We consider the following class of equations with exponential nonlinearities on a compact surface $M$:
$$
  - \D u = \rho_1 \left( \frac{h_1 \,e^{u}}{\int_M
      h_1 \,e^{u} } - \frac{1}{|M|} \right) - \rho_2 \left( \frac{h_2 \,e^{-u}}{\int_M
      h_2 \,e^{-u} } - \frac{1}{|M|} \right),
$$
which is associated to the mean field equation of the equilibrium turbulence with arbitrarily signed vortices. Here $h_1, h_2$ are smooth positive
functions and $\rho_1, \rho_2$ are two positive parameters.

\medskip

We start by proving a concentration phenomena for the above equation, which leads to a-priori bound for the solutions of this problem provided $\rho_i\notin 8\pi\N, \, i=1,2$.

Then we study the blow up behavior when $\rho_1$ crosses $8\pi$ and $\rho_2 \notin 8\pi\N$. By performing a suitable decomposition of the above equation and  using the \emph{shadow system} that was introduced for the $SU(3)$ Toda system, we can compute the Leray-Schauder topological degree for $\rho_1 \in (0,8\pi) \cup (8\pi,16\pi)$ and $\rho_2 \notin 8\pi\N$.

As a byproduct our argument, we give new existence results when the underlying manifold is a sphere and a new proof for some known existence result.
\end{abstract}

\maketitle

\section{Introduction}

\no In this paper we are concerned with a mean field equation of the followin type
\begin{equation} \label{liouv}
  - \D u = \rho_1 \left( \frac{h_1 \,e^{u}}{\int_M
      h_1 \,e^{u} } - \frac{1}{|M|} \right) - \rho_2 \left( \frac{h_2 \,e^{-u}}{\int_M
      h_2 \,e^{-u} } - \frac{1}{|M|} \right) \quad \mbox{on } M,
\end{equation}
where $\D$ is the Laplace-Beltrami operator, $\rho_1, \rho_2$ are two positive parameters, $h_1,h_2$ are smooth positive functions and $M$ is a compact orientable surface without boundary. Throughout the paper, for the sake of simplicity we normalize the total volume of $M$ so that $|M|=1$.

\medskip

Equation \eqref{liouv} plays an important role in mathematical physics, as it arises as a mean field equation of the equilibrium turbulence with arbitrarily signed vortices, see Joyce and Montgomery \cite{joy-mont} and Pointin and Lundgren \cite{point-lund}. In this model the vortices are made of positive and negative intensities with the same value; here $u$ is associated with the stream function of the fluid while $\rho_1/\rho_2$ corresponds to the ratio of the numbers of the signed vortices. See, for instance, \cite{cho,lio, mar-pul,new,os} and the references therein. When the nonlinear term $e^{-u}$ in \eqref{liouv} is replaced by $e^{-\gamma u}$ with $\gamma>0$, the equation \eqref{liouv} describes a more general type of equation which arises in the context of the statistical
mechanics description of 2D-turbulence. For the recent developments of such equation, we refer the readers to \cite{pr,rt,rtzz} and the references therein. Moreover, let us point out that equation \eqref{liouv} has some connections with geometry; in fact, the case $\rho_1 = \rho_2$ turns out to be useful in the construction of constant mean curvature surfaces as explained in \cite{wen1,wen2}.

\

The goal of this paper is to compute the Leray-Schauder degree of \eqref{liouv}. To describe the main features of the problem we will first focus on the one-parameter case (when $\rho_2=0$) of \eqref{liouv}, i.e. the standard mean field equation. For future purposes, let us consider a generalization of it, in which singular sources appear on the right-hand side of \eqref{liouv}, namely
\begin{align} \label{eq2}
  - \D u = \rho \left( \frac{h \,e^{u}}{\int_M
      h \,e^{u} } - 1 \right) - 4\pi\sum_{q\in S}\alpha_q(\delta_q-1),
\end{align}
where $S$ is a finite set of points in $M$ and $\a_q \geq 0$ for all $q\in S$. Roughly speaking, in the blow up analysis of problem \eqref{liouv} when one component of $(e^u, e^{-u})$ blows up and the other one stays bounded, equation \eqref{liouv} resembles the one with singular sources \eqref{eq2}.

Equation \eqref{eq2} has a close relation with geometry as it rules the change of Gaussian curvature under conformal deformation of the underlying metric. Indeed, when $a_q = 0$ for all $q\in S$,  setting $\tilde g= e^{2v}g$ one gets
$$
	\D_{\tilde g} = e^{-2v}\D_g, \qquad	-\D_g v = K_{\tilde g} e^{2v} - K_g,
$$
where $K_g$ and $K_{\tilde g}$ are the Gaussian curvatures of $(M, g)$ and of $(M, \tilde g)$ respectively. More in general, when $a_q \neq 0$ for some $q\in S$, the new metric will have a conic singularity at the point $q$. Equation \eqref{eq2} also appears in mathematical physics in the description of the abelian or non-abelian Chern-Simons gauge field theory; we refer the interested reader to \cite{cal, du1, du2, kies, nt2}. There is an extensive literature on \eqref{eq2} in the past decades, see \cite{bah-cor,bt,bt2,cha,cha2,djadli,chenlin,cl1,cl2,cl3,djlw1,li,lin,mal,nt1,scho-zha,tar}.

One of the main difficulties in attacking this kind of problems is due to the lack of compactness; indeed, the solutions of \eqref{eq2} might blow up. This phenomena was treated in \cite{bt,bar-mon,bm, l, ls} where the authors proved a quantization result. More precisely, if we have blow up at a regular point $x_R\in M\setminus S$ for a sequence $(u_n)_n$ of solutions to \eqref{eq2}, it holds
\begin{align*}
\lim_{r \to 0} \lim_{n \to + \infty}  \rho \frac{ \int_{B_r(x_R)} h \, e^{u_n} }{ \int_M h \, e^{u_n} }  = 8 \pi.
\end{align*}
On the other hand, if blow up occurs at a singular point $x_S\in S$ with weight $4\pi\a$ then one has
\begin{align*}
\lim_{r \to 0} \lim_{n \to + \infty}  \rho \frac{ \int_{B_r(x_S)} h \, e^{u_n} }{ \int_M h \, e^{u_n} }  = 8 \pi(1+\a).
\end{align*}
Let us introduce the set $\Sigma$ of the critical parameters and it plays a crucial role for this class of equations:
\begin{align} \label{Sigma}
\begin{split}
\Sigma:&=\biggr\{8N\pi +\Sigma_{q\in A}8\pi(1+\alpha_{q})\; : \; A\subseteq S,~N\in\mathbb{N}\cup\{0\}\biggr\}\setminus\{0\}\\
&=\big\{8\mathfrak{a}_{k}\pi \; : \; k=1,2,3,\dots\big\},
\end{split}
\end{align}
where $\mathfrak{a}_{k}$ will be defined in \eqref{a_k}. By some further analysis, see for example \cite{bt,bat-man, bm, ls}, from the above quantization result it follows that the set of solutions to \eqref{eq2} is uniformly bounded in $C^{2,\beta}$ for any fixed $\beta \in (0,1)$ provided that $\rho \notin \Sg$. Thus, the Leray-Schauder degree $d_\rho$ of \eqref{eq2} is well-defined for $\rho \notin \Sg$. It was first pointed out in \cite{l} that this degree
should depend only on the topology of $M$ for the case without singularities and that $d(\rho)=1$ for $\rho < 8\pi$. Moreover, by the homotopic invariance of the degree, we have that it is a constant in each interval $(8\mathfrak{a}_k\pi, 8\mathfrak{a}_{k+1}\pi)$, where $\mathfrak{a}_0=0$. Finally, in \cite{cl1}-\cite{cl3}, Chen and Lin derived the topological degree counting formula, see Theorem A.

The numbers $\mathfrak{a}_{k}$ are combinations of the elements of the set $\Sigma$ and they can be expressed through the following \emph{generating function} $\Xi_0:$
\begin{align} \label{a_k}
\Xi_0(x)=&\bigr(1+x+x^2+x^3+\cdots\bigr)^{-\chi(M)+|S|}\Pi_{q\in S}(1-x^{1+\alpha_{q}})\nonumber\\
=&1+\mathfrak{c}_1x^{\mathfrak{a}_1}+\mathfrak{c}_2x^{\mathfrak{a}_2}+\cdots+\mathfrak{c}_kx^{\mathfrak{a}_k}+\cdots,
\end{align}
where $\chi(M)$ denotes the Euler characteristic of $M.$ Moreover, it would be helpful to define a modified generating function:
\begin{align} \label{gen}
\begin{split}
\Xi_1(x)=&\bigr(1+x+x^2+x^3+\cdots\bigr)^{-\chi(M)+1+|S|}\Pi_{q\in S}(1-x^{1+\alpha_{q}}) \\
=&1+\tilde{\mathfrak{c}}_1x^{\tilde{\mathfrak{a}}_1}+\tilde{\mathfrak{c}}_2x^{\tilde{\mathfrak{a}}_2}+\cdots+\tilde{\mathfrak{c}}_kx^{\tilde{\mathfrak{a}}_k}+\cdots.
\end{split}
\end{align}
It is easy to see that
\begin{align*}
(1+x+x^2+x^3\cdots)^{-\chi(M)+1}=1+b_1x+b_2x^2+\cdots+b_kx^k+\cdots.
\end{align*}
where
\begin{align} \label{b_k}
b_k=\left(\begin{array}{l}k-\chi(M)\\ ~\quad k\end{array}\right),
\end{align}
and
\begin{align*}
\left(\begin{array}{l}k-\chi(M)\\ ~\quad k\end{array}\right)=
\left\{\begin{array}{ll}
\frac{(k-\chi(M))\cdots(1-\chi(M))}{k!} &\mathrm{if}~k\geq1,\\
1 &\mathrm{if}~k=0.
\end{array}
\right.
\end{align*}

\medskip

With these ingredients one can express the Leray-Schauder degree for (\ref{eq2}) as stated in the following result.

\noindent {\bf{Theorem A}}. (\cite{cl3}) {\em Let $d_{\rho}$ be the Leray-Schauder degree for (\ref{eq2}), $\mathfrak{a}_k$ and $\tilde{\mathfrak{c}}_k$ be defined in \eqref{a_k} and \eqref{gen}, respectively. Suppose $8\mathfrak{a}_k\pi<\rho<8\mathfrak{a}_{k+1}\pi$. Then
$$d_{\rho}=\tilde{\mathfrak{c}}_k,$$
where $d_\rho=1$ for $k=0$.}

\

\begin{remark} \label{remark}
It is not difficult to see that when $\alpha_q = 0$ for any $q\in S$ the above formula can cover the degree counting formula for the regular case obtained in \cite{cl1} and it holds $d_{\rho}=b_k$. On the other hand, an interesting case is when $\alpha_q \in \mathbb{N}$ for any $q\in S$. In this situation the set $\Sigma$ in \eqref{Sigma} has the form $\Sigma=\{ 8n\pi \,:\, n\in\N \}$ and the generating function $\Xi_1$ in \eqref{gen} can be expressed as
\begin{align*}
\Xi_1(x)=&\bigr(1+x+x^2+x^3+\cdots\bigr)^{-\chi(M)+1}\Pi_{q\in S}\bigr(1+x+\cdots +x^{\alpha_{q}}\bigr).
\end{align*}
By direct computations, when $|S|=1$ and $\alpha_q=2$ we can get the explicit representation of $\Xi_1$ as follows
$$
\Xi_1(x)= 1+(b_1+1)x+(b_2+b_1+1)x^2+ \cdots +(b_k+b_{k-1}+b_{k-2})x^k + \cdots
$$
and it will appear in Theorem \ref{th1.5} and Theorem \ref{th1.6}.
\end{remark}

\

\noindent Concerning the mean field equation \eqref{liouv} there are fewer result regarding blow up analysis. However, one still expects an analogous quantization property to hold. This was indeed proved in \cite{jwyz} for the case $h_1=h_2=h$ by exploiting the geometric interpretation of equation \eqref{liouv} and  a quantization result concerning harmonic maps; recently, in \cite{jwy} the authors generalized this result for any choice of the two positive functions $h_1,h_2$. For a blow up point $p$ and a sequence $(u_n)_n$ of solutions to \eqref{liouv} it holds
\begin{align*}
    &\lim_{r \to 0} \lim_{n \to + \infty} \rho_{1} \frac{ \int_{B_r(p)} h \, e^{u_n} }{ \int_M h\, e^{u_n} } \in 8 \pi \N, \\
    &\lim_{r \to 0} \lim_{n \to + \infty} \rho_{2} \frac{ \int_{B_r(p)} h \, e^{-u_n} }{ \int_M h\, e^{-u_n} } \in 8 \pi \N.
\end{align*}
Let us point out that the case of multiples of $8 \pi$ indeed occurs, see \cite{ew, gp}.

\medskip

On the other hand, the topological degree theory for equation \eqref{liouv} is still not developed. Indeed, the existence result to \eqref{liouv} relies mainly on variational techniques and Morse theory, see for example \cite{m5, jev, jev2}. The only result regarding the topological degree was obtained in \cite{jev3} where the author proved that the degree is always odd provided the two parameters are comparable, namely $\rho_1,\rho_2\in (8k\pi,8(k+1)\pi), k\in\mathbb{N}$.

\

The aim of this paper is to study the blow up behavior of \eqref{liouv} in the first non-trivial case, namely when $\rho_1$ crosses $8\pi$ and $\rho_2 \notin 8\pi\N$. Then, exploiting this analysis we will provide the first degree formula for this class of equations. It is easy to see that equation \eqref{liouv} is invariant by adding constant to the solutions. Therefore, we assume that $\int_M u = 0$ and throughout the paper we will always work in the following space:
$$
	\mathring H^1 = \left\{ u\in H^1(M) \; : \; \int_M u = 0 \right\}.
$$
The first step in this program is to understand under which conditions the blow up phenomena occur. We point out that the following result can be obtained by suitably modifying the argument in \cite{bt,bat-man, bm, ls}; however, it was never proved in full details. We provide here an alternative proof, see Section \ref{sec:equiv}, which is based on a concentration property of the blowing up solutions and which is interesting by itself.
\begin{theorem}
\label{th1.1}
Suppose $h_i$ are positive smooth functions and $\rho_i\notin 8\pi\mathbb{N},~i=1,2.$ Then, there exists a positive constant $C$ such that for any solution of equation (\ref{liouv}), there holds:
$$|u(x)|\leq C \qquad \forall x\in M.$$
\end{theorem}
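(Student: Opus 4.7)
The plan is to argue by contradiction, combining the $8\pi$-quantization of local masses (recalled in the introduction after \eqref{liouv} from \cite{jwy,jwyz}) with a \emph{concentration property} that rules out any absolutely continuous residual in the weak limits of the non-linear terms. Suppose there exists a sequence $(u_n) \subset \mathring{H}^1$ of solutions to \eqref{liouv} with $\|u_n\|_{L^\infty(M)} \to +\infty$. Using $|M|=1$, rewrite \eqref{liouv} as
\[
-\Delta u_n \;=\; V_{1,n}\, e^{u_n} \;-\; V_{2,n}\, e^{-u_n} \;+\; (\rho_2 - \rho_1),
\]
where $V_{1,n} := \rho_1 h_1 / \int_M h_1 e^{u_n}$ and $V_{2,n} := \rho_2 h_2 / \int_M h_2 e^{-u_n}$. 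The measures $\mu_{1,n} := V_{1,n}\,e^{u_n}\,dV_g$ and $\mu_{2,n} := V_{2,n}\,e^{-u_n}\,dV_g$ satisfy $\mu_{i,n}(M) = \rho_i$; up to a subsequence they converge in the sense of measures to some $\mu_i \ge 0$ with $\mu_i(M) = \rho_i$.

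Next I would introduce the blow-up sets $\mathcal{S}_1, \mathcal{S}_2 \subset M$, where $u_n \to +\infty$, respectively $-u_n \to +\infty$, along some subsequence. Since $\int_M u_n = 0$ but $\|u_n\|_\infty \to +\infty$, at least one of these sets is non-empty. By the quantization result recalled in the introduction, each $\mathcal{S}_i$ is finite and at every $p \in \mathcal{S}_i$ the local mass $\sigma_i(p) := \lim_{r \to 0}\lim_{n \to \infty}\mu_{i,n}(B_r(p))$ belongs to $8\pi\mathbb{N}$.

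The crucial ingredient, and the main obstacle, is the \emph{concentration property}
\[
\mu_i \;=\; \sum_{p \in \mathcal{S}_i}\sigma_i(p)\,\delta_p, \qquad i=1,2,
\]
that is, no diffuse mass survives outside the blow-up set. To obtain it, one splits $u_n = w_n^{(1)} - w_n^{(2)}$ with $w_n^{(i)}$ solving $-\Delta w_n^{(i)} = \mu_{i,n} - \rho_i$ and $\int_M w_n^{(i)} = 0$, so that each $w_n^{(i)}$ satisfies a scalar mean-field-type equation. A Brezis--Merle alternative applied on small geodesic disks around any $p \notin \mathcal{S}_1 \cup \mathcal{S}_2$ then yields uniform $L^\infty$ bounds on $w_n^{(i)}$ near $p$, via elliptic regularity and the uniform local integrability of $e^{\pm w_n^{(i)}}$. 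The delicate part is the coupling through the normalisations $V_{i,n}$: one must iterate the estimates between the two components, and handle carefully the possible overlap $\mathcal{S}_1 \cap \mathcal{S}_2$, in order to transfer the $L^\infty$ control from one sign to the other.

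Once the concentration property is established, for every index $i$ with $\mathcal{S}_i \neq \emptyset$ one obtains
\[
\rho_i \;=\; \mu_i(M) \;=\; \sum_{p \in \mathcal{S}_i}\sigma_i(p) \;\in\; 8\pi\,\mathbb{N}.
\]
The standing assumption $\rho_1, \rho_2 \notin 8\pi\mathbb{N}$ therefore forces $\mathcal{S}_1 = \mathcal{S}_2 = \emptyset$. But then $u_n$ is uniformly bounded from above and from below on $M$, contradicting $\|u_n\|_\infty \to +\infty$ and proving Theorem \ref{th1.1}.
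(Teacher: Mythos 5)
Your overall strategy matches the paper's: argue by contradiction, invoke the $8\pi$-quantization of local masses, establish a concentration property to rule out diffuse mass, and deduce $\rho_i\in 8\pi\N$. However, the step you identify as ``the crucial ingredient, and the main obstacle'' --- the concentration $\mu_i=\sum_{p\in\mathcal{S}_i}\sigma_i(p)\delta_p$ --- is precisely where your sketch has a genuine gap. The Brezis--Merle alternative on disks around points \emph{outside} $\mathcal{S}_1\cup\mathcal{S}_2$ only gives local $L^\infty$ bounds (which the paper also obtains, see its estimate \eqref{2.3}); it does \emph{not} give the required decay $u_{k,i}\to-\infty$ there, nor does it determine the fate of residual mass. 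You acknowledge that the ``delicate part'' is the coupling and the overlap $\mathcal{S}_1\cap\mathcal{S}_2$, but you give no mechanism for resolving it.

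The paper's Lemma \ref{le2.2} fills this gap by exploiting the \emph{structure} of the local mass pair $(\sigma_{p,1},\sigma_{p,2})=(2m(m+1),2m(m-1))$ (or its swap) from \cite{jwy} at each common blow-up point $p\in\mathfrak{S}_1\cap\mathfrak{S}_2$, with $m\ge 2$ after Lemma \ref{le2.1}. The key is that the \emph{difference} of local masses is $8\pi m\ge 16\pi$: one compares $u_{k,1}$ with a subsolution $z_k$ via the maximum principle, computes the limiting Green's representation carrying a Dirac of weight $\ge 16\pi$, deduces $z(x)\geq 8\log\frac{1}{|x-p|}+O(1)$, and derives a contradiction with $\int e^{u_{k,1}}\le C$. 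This mechanism, applied to whichever component has the larger local mass, gives concentration of that component, which is enough to reach $\rho_i\in 8\pi\N$; there is no need to prove concentration for both components, and indeed it is not clear that your sketch could do so for the smaller-mass component. Moreover, in your framework you also implicitly need that both $\mathcal{S}_1$ and $\mathcal{S}_2$ are nonempty and that they intersect (the paper proves both via auxiliary arguments with \cite{l,ls}); your text says only ``at least one of these sets is non-empty.'' In short: right scaffolding, but the load-bearing beam --- the precise local mass classification and the Green's function comparison it enables --- is missing.
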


\medskip

\noindent It follows that the topological degree $d_{SG}$ of equation (\ref{liouv}) is well-defined for $\rho_i\notin 8\pi\mathbb{N},~i=1,2.$ Observing that by deforming the equation one gets $d_{SG} = d_{\rho_2}$ for $\rho_1<8\pi$ and $\rho_2\notin 8\pi\N$, where $d_{\rho_2}$ denotes the degree associated to equation \eqref{eq2} with $S=\emptyset$. Moreover, $d_{\rho_2}$ is known by Theorem A; therefore, our goal in this paper is to compute the degree $d_{SG}$ for $\rho_1\in(8\pi,16\pi)$. By the homotopic invariance, we can get the degree is a constant in each interval $(8k\pi, 8(k+1)\pi), k\in\N$. Then our work is reduced to calculate the difference between the degree for $\rho_1\in(0,8\pi)$ and $\rho_1\in(8\pi,16\pi)$ provided $\rho_2$ is fixed. This jump might be not zero due to the contribution by the degree of the bubbling solutions for $\rho_1$ crosses $8\pi, \rho_2 \notin 8\pi\N$, see the proof of Theorem \ref{th1.6} for more details.

For $n\in\N$ we let
\begin{equation}\label{not}
	d_{SG}(n) = d_{SG} \quad \mbox{for } \rho_1\in(8n\pi, 8(n+1)\pi),\; \rho_2\notin8\pi\N
\end{equation}
we have the following formula:
\begin{align*}
	d_{SG}(n+1)=	d_{SG}(n)~ + ~\Bigr\{\mbox{degree of the blow up solutions for }\rho_1~\mathrm{crosses}~8(n+1)\pi\Bigr\}.
\end{align*}

\medskip

In order to compute the jump of the degree we start by decomposing $u$ such that $u=v_1-v_2,$ where $v_1,v_2$ satisfy
\begin{equation}
\label{s}
\begin{cases}
\displaystyle{\Delta v_1+\rho_1\left(\frac{h_1e^{v_1-v_2}}{\int_Mh_1e^{v_1-v_2}}-1\right)}=0,~&\int_Mv_1=0,\vspace{0.2cm}\\
\displaystyle{\Delta v_2+\rho_2\left(\frac{h_2e^{v_2-v_1}}{\int_Mh_2e^{v_2-v_1}}-1\right)}=0,~&\int_Mv_2=0.
\end{cases}
\end{equation}
Concerning the equation (\ref{liouv}) and the system (\ref{s}), we have the following result, see Section \ref{sec:equiv}.
\begin{theorem}
\label{th1.2}
Let $d_{SG}$ denote the topological degree for the equation (\ref{liouv}). Then, the topological degree $d_s$ of the system (\ref{s}) is well defined and we have
$$d_{SG}=d_s \,.$$
\end{theorem}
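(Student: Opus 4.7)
The plan is to exhibit a bijection of solutions between (\ref{liouv}) and (\ref{s}) and then match the two Leray-Schauder degrees via a linear change of variables and a product-formula argument.

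First I would set up the correspondence rigorously. Given $u \in \mathring H^1$ solving (\ref{liouv}), define $v_1, v_2 \in \mathring H^1$ as the unique mean-zero solutions of
\begin{equation*}
-\Delta v_1 = \rho_1\!\left( \frac{h_1 e^u}{\int_M h_1 e^u} - 1 \right), \qquad -\Delta v_2 = \rho_2\!\left( \frac{h_2 e^{-u}}{\int_M h_2 e^{-u}} - 1 \right).
\end{equation*}
By subtraction, $v_1 - v_2$ solves (\ref{liouv}) with zero mean, whence $u = v_1 - v_2$ by uniqueness. Conversely, any $(v_1,v_2)$ solving (\ref{s}) yields $u := v_1 - v_2$ as a solution of (\ref{liouv}). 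Combined with Theorem \ref{th1.1} and standard elliptic estimates applied to the Laplace equations defining $v_i$, this shows that solutions of (\ref{s}) are also uniformly bounded; in particular $d_s$ is well defined.

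Next I would perform the linear change of variables $\Phi(v_1,v_2) := (v_1 - v_2,\, v_1 + v_2)$, a linear homeomorphism of $\mathring H^1 \times \mathring H^1$ lying in the identity component of the general linear group, so that it preserves the Leray-Schauder degree. Writing $T_e$ for the compact operator whose fixed points are solutions of (\ref{liouv}) and $S$ for the compact operator assigning to $u$ the unique mean-zero solution of
\begin{equation*}
-\Delta w = \rho_1\!\left( \tfrac{h_1 e^u}{\int_M h_1 e^u} - 1 \right) + \rho_2\!\left( \tfrac{h_2 e^{-u}}{\int_M h_2 e^{-u}} - 1 \right),
\end{equation*}
a direct computation (splitting the Laplace problem by linearity) shows that in the new variables the system operator takes the triangular form $\tilde T_s(u,w) = (T_e(u),\, S(u))$; crucially both components are independent of $w$.

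Finally I would apply the homotopy $H_t(u,w) := (u - T_e(u),\, w - tS(u))$ for $t \in [0,1]$, which connects $I - \tilde T_s$ at $t=1$ to $(I - T_e) \times I$ at $t=0$. Any zero of $H_t$ has $u$ a fixed point of $T_e$, hence uniformly bounded by Theorem \ref{th1.1}; then $|w| = t|S(u)| \le |S(u)|$ is also uniformly bounded by elliptic regularity. Choosing a ball $B_R^2 \subset \mathring H^1 \times \mathring H^1$ containing all such zeros, the homotopy invariance and product formula for the Leray-Schauder degree yield
\begin{equation*}
d_s = \deg(I - \tilde T_s,\, B_R^2,\, 0) = \deg(H_0,\, B_R^2,\, 0) = \deg(I - T_e,\, B_R,\, 0) \cdot \deg(I,\, B_R,\, 0) = d_{SG}.
\end{equation*}

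The main obstacle is establishing the uniform a priori bounds that legitimize the homotopy invariance step: one must rule out blow-up of $w$ independently of $u$. This is resolved by the explicit structure $H_t = (u - T_e(u),\, w - tS(u))$, which reduces the question of bounds to that for the single equation (\ref{liouv}), already handled by Theorem \ref{th1.1}.
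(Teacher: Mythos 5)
Your proposal is correct and gives a genuinely different proof from the paper's. The paper establishes the degree identity by explicitly comparing Morse indices: at each corresponding pair of solutions $u$ and $(v_1,v_2)$ it constructs a bijection between eigenfunctions with negative eigenvalue of the linearization of (\ref{liouv}) and of (\ref{s}), and then reads off that the local indices agree. You instead perform the linear change of variables $(u,w)=(v_1-v_2,\,v_1+v_2)$, observe that the system operator becomes the triangular map $\tilde T_s(u,w)=(T_e(u),S(u))$ with both components independent of $w$, and finish with the homotopy $H_t(u,w)=(u-T_e(u),\,w-tS(u))$ together with the product formula. Your route is slicker and bypasses the linearization bookkeeping entirely; the paper's route is more hands-on and makes the Morse-index correspondence explicit, which is closer in spirit to how the degree is ultimately used in the later sections. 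The a priori bound argument you give for the homotopy (boundedness of $u$ from Theorem \ref{th1.1}, boundedness of $w=tS(u)$ by elliptic estimates) is exactly right. One precision worth adding: the justification ``\,$\Phi$ lies in the identity component of the general linear group, so it preserves the Leray-Schauder degree\,'' is not quite the relevant statement, since $\Phi-I$ is not compact and Kuiper's theorem about connectedness of $GL(H)$ does not by itself yield degree invariance. The correct fact is the conjugation invariance $\deg(I-T,\Omega,0)=\deg(I-\Phi T\Phi^{-1},\Phi\Omega,0)$ for any bounded linear isomorphism $\Phi$ and compact $T$, which follows by passing to finite-rank approximations of $T$ and using $\det(Lf'(x_0)L^{-1})=\det(f'(x_0))$ in the Brouwer computation; with this substitution your argument is fully rigorous.
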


\medskip

\noindent As a consequence of Theorem \ref{th1.2}, we can rewrite the problem (\ref{liouv}) in an equivalent way in terms of the above system (\ref{s}) such that the their degree are coincide. Then we focus on the problem \eqref{s} and calculate the degree jump when $\rho_1$ crosses $8\pi$. Specifically, we need to compute the topological degree of the bubbling solution of (\ref{s}) when $\rho_1$ crosses $8\pi,$ $\rho_2\notin 8\pi\mathbb{N}.$ Let us now introduce the Green function $G(x,p)$:
\begin{align*}
-\Delta G(x,p)=\delta_p-1~\mathrm{in}~M,\quad\mathrm{with}~\int_MG(x,p)=0.
\end{align*}
We will denote $R(x,p)$ as the regular part of the Green function $G(x,p)$. Then, we have the following result, see Section \ref{sec:shadow}.
\begin{theorem}
\label{th1.3}
Let $(v_{1k},v_{2k})$ be a sequence of solutions to (\ref{s}) with $(\rho_{1k},\rho_{2k})\rightarrow(8\pi,\rho_2)$,
and assume \mbox{$\displaystyle{\max_M}(v_{1k},v_{2k})\rightarrow\infty$.} Then, it holds:
\begin{itemize}
  \item [(i)] For some $p\in M$ we have
  \begin{align}
  \label{1.01}
  \rho_{1k}\frac{h_1e^{v_{1k}-v_{2k}}}{\int_Mh_1e^{v_{1k}-v_{2k}}}\rightarrow8\pi\delta_p\;.
  \end{align}
  \item [(ii)]
  $v_{2k}\rightarrow w~\mathrm{in}~C^{2,\alpha}(M)$: moreover the couple $(p,w)$ satisfies
\begin{align}
\label{sy}
\left\{\begin{array}{l}
\displaystyle{\Delta w+\rho_2\left(\frac{h_2e^{w-8\pi G(x,p)}}{\int_{M}h_2e^{w-8\pi G(x,p)}}-1\right)=0,}\\
\nabla\biggr(\log (h_1e^{-w})(x)+4\pi R(x,x)\biggr)_{|x=p}=0.
\end{array}\right.
\end{align}
\end{itemize}
\end{theorem}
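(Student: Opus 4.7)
My plan is to proceed in three stages. First, I would identify the blow-up configuration using the quantization result recalled from \cite{jwy}, which, combined with $\rho_{1k}\to 8\pi$ and $\rho_2\notin 8\pi\N$, forces exactly one blow-up point carrying mass $8\pi$ in the $e^{u_k}$ direction (with $u_k = v_{1k}-v_{2k}$) and no concentration in the $e^{-u_k}$ direction; this simultaneously gives (i) and a uniform bound on $v_{2k}$. Second, I would pass to the limit in the equation for $v_{2k}$ to obtain the first identity of \eqref{sy}. Third, I would derive the location constraint, i.e.\ the second identity of \eqref{sy}, via a Pohozaev-type computation around the blow-up point.

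\textbf{Stages 1 and 2.} Since $u_k$ solves \eqref{liouv}, the quantization from \cite{jwy} implies that at each blow-up point the two normalized integrals concentrate with mass in $8\pi\N\cup\{0\}$. As $\rho_{1k}\to 8\pi$, there is at most one blow-up point $p$ carrying mass $8\pi$ from $e^{u_k}$, at which $u_k(p_k)\to+\infty$ along some sequence of approximate maxima $p_k\to p$. The condition $\rho_2\notin 8\pi\N$, exploited through the symmetric role of $\rho_2$ in the $v_{2k}$-equation together with the normalization $\int_M v_{2k}=0$, is used to rule out concentration of $e^{-u_k}$; this yields (i). Elliptic regularity applied to the $v_{2k}$-equation (whose right-hand side is then uniformly bounded in every $L^q$) gives a uniform $C^{2,\alpha}$ bound and, up to subsequence, $v_{2k}\to w$ in $C^{2,\alpha}(M)$ with $\int_M w=0$. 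Combining (i) with the $v_{1k}$-equation one obtains $v_{1k}\to 8\pi G(\cdot,p)$ in $C^{2}_{loc}(M\setminus\{p\})$, while $e^{-v_{1k}}\to 0$ uniformly in a neighborhood of $p$; hence $\int_M h_2 e^{v_{2k}-v_{1k}}\to \int_M h_2 e^{w-8\pi G(\cdot,p)}$ and passing to the limit in the $v_{2k}$-equation yields the first identity of \eqref{sy}.

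\textbf{Stage 3 and main obstacles.} For the location constraint I would rewrite the $v_{1k}$-equation as a Liouville-type equation with effective coefficient $\rho_{1k} h_1 e^{-v_{2k}}/\int_M h_1 e^{u_k}$, whose limit is governed by $w$ (and hence by $p$), and apply the Pohozaev identity on a small ball $B_\delta(p_k)$. Decomposing $v_{1k}$ as a standard bubble plus the harmonic contribution $8\pi G(\cdot,p_k)$ plus a smooth remainder, and then sending $k\to\infty$ followed by $\delta\to 0$, the bulk term contributes $\nabla\log(h_1 e^{-w})(p)$ while the boundary terms produce $4\pi\nabla R(x,x)|_{x=p}$; the factor arises from the mass $8\pi$ together with the symmetry $\nabla_x R(x,p)|_{x=p}=\tfrac{1}{2}\nabla R(x,x)|_{x=p}$. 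This gives the second identity of \eqref{sy}. The main technical obstacle is the exclusion of $e^{-u_k}$ concentration in Stage 1: pure mass-quantization only forces the concentrated mass of $\rho_{2k}h_2 e^{-u_k}/\int h_2 e^{-u_k}$ to be an integer multiple of $8\pi$ strictly less than $\rho_2$, so a finer argument exploiting the coupled structure of \eqref{s} (and the zero-average constraint on $v_{1k},v_{2k}$) is needed to show it vanishes. The Pohozaev step in Stage 3 is by now standard but requires uniform control of the $k$-dependence of the effective coefficient and of the remainder in the bubble expansion; both follow from the $C^{2,\alpha}$-convergence $v_{2k}\to w$ established in Stage 2.
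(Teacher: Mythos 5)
Your overall strategy---invoke the quantization from \cite{jwy} to isolate a single $8\pi$ concentration for $e^{u_k}$ and rule out concentration of $e^{-u_k}$, pass to the limit in the $v_{2k}$-equation, and derive the location constraint via a Pohozaev identity---matches the structure of the paper's proof (which cites \cite{chenlin} for the Pohozaev step). The genuine gap is precisely the one you flag and leave open: the exclusion of $e^{-u_k}$-concentration. The ingredients you gesture at (coupled structure of \eqref{s}, zero-average normalization) do not close it, because mass quantization alone is consistent with $e^{-u_k}$ concentrating some multiple of $8\pi$ \emph{together with} residual diffuse mass, so $\rho_2\notin 8\pi\N$ gives no immediate contradiction. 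What the paper actually uses is Lemma~\ref{le2.1}, the concentration Lemma~\ref{le2.2}, and the local-mass classification from \cite{jwy}: at any common blow-up point $p$ of $u_{k,1},u_{k,2}$ one has $(\sigma_{p,1},\sigma_{p,2})=(2m(m\pm1),2m(m\mp1))$ with $m\geq 2$, and Lemma~\ref{le2.2} then shows that the component carrying local mass $2m(m+1)$ tends to $-\infty$ off the blow-up set, eliminating all residual mass. If $\sigma_{p,1}=2m(m+1)\geq 12$, then $\int_{B_\delta(p)}\rho_{1k} h_1 e^{u_{k,1}}\to 2\pi\sigma_{p,1}\geq 24\pi$, exceeding the total mass $\rho_{1k}\to 8\pi$; if instead $\sigma_{p,2}=2m(m+1)$, concentration of $u_{k,2}$ forces $\rho_2\in 8\pi\N$; and if the two blow-up sets are disjoint one argues as in \eqref{2.11}--\eqref{2.14} to get $\rho_2\in 8\pi\N$ again. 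This is the ``finer argument'' you need to supply.

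A secondary point: from an $L^\infty$ bound on $e^{-u_k}$, elliptic regularity yields $v_{2k}\to w$ only in $C^{1,\alpha}$, since near $p$ the factor $e^{-v_{1k}}$ is controlled merely in $L^\infty$; the $C^{2,\alpha}$ convergence required to pass to the limit in \eqref{sy} needs the gradient version of the single-bubble estimate from \cite{l,chenlin}, as in \cite[Lemma~2.4]{lwy}. Your Pohozaev derivation of the location constraint is sound in outline (it is essentially the argument underlying \cite{chenlin}) once this uniform $C^{2,\alpha}$ control of $\tilde h_k=h_1 e^{-v_{2k}}$ is available.
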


\medskip

\noindent The system \eqref{sy} is called the \emph{shadow system} of \eqref{s}. Similar systems were obtained in \cite{lwy} and \cite{dapiru}.
We say $(p,w)$ is a non-degenerate solution of \eqref{sy} if the linearized system in $(p,w)$ admits only trivial solution. By using the transversality theorem, we will prove in Section \ref{sec:shadow} that we can always choose two positive functions $h_1, h_2$ such that the solutions of \eqref{sy} are non-degenerate. Since the topological degree is independent of $h_1,h_2$, we may assume that all the solutions of \eqref{sy} are non-degenerate and this non-degenerate property is necessary in our approach. On the contrary, for any non-degenerate solution $(p,w)$ of \eqref{sy} we can construct a sequence of bubbling solutions $(v_{1k},v_{2,k})$ of \eqref{s} with $\rho_{1k}\to8\pi,$ $\rho_2\notin 8\pi\N$ such that \eqref{sy} holds and $v_{2k}\to w.$

We will see that for a sequence of bubbling solutions which blows up a point $p$, the rate of $|\rho_{1k}-8\pi|$ plays a crucial role in all the arguments (see for example Theorem \ref{th1.4}) and it is related to the following quantity:
\begin{equation}\label{l(p)}
l(p)=\Delta\log h_1(p)-\rho_2+8\pi-2K(p),
\end{equation}
where $K(p)$ is the Gaussian curvature at $p$.
\begin{theorem}\label{bubb}
Let $(p,w)$ be a non-degenerate solution of \eqref{s}, $\rho_2\notin 8\pi\N$ and suppose $l(p)\neq 0$, where $l(p)$ is given in \eqref{l(p)}. Then, there exists a sequence of bubbling solutions $(v_{1k},v_{2k})$ to \eqref{s} with $\rho_{1k}\to8\pi$ such that \emph{(i)} and \emph{(ii)} of Theorem \ref{th1.3} hold true.
\end{theorem}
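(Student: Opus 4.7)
The plan is to perform a Lyapunov-Schmidt reduction around the non-degenerate solution $(p,w)$ of the shadow system, with three free parameters, namely the bubble location $\xi$ close to $p$ and the concentration rate $\lambda$, and with $\varepsilon:=\rho_{1}-8\pi$ serving as a bifurcation parameter. In isothermal coordinates around $\xi$ I introduce the standard Liouville bubble
\begin{equation*}
U_{\lambda,\xi}(y)=\log\frac{8\lambda^{2}}{\bigl(1+\lambda^{2}h_{1}(\xi)e^{-w(\xi)}|y|^{2}\bigr)^{2}},
\end{equation*}
normalised so that $\int h_{1}(\xi)e^{-w(\xi)}e^{U_{\lambda,\xi}}=8\pi$ in the flat plane, and glue it to the outer profile $8\pi G(\cdot,\xi)+w$ by a cut-off followed by projection onto mean-zero functions on $M$, producing $V_{1}(\cdot;\lambda,\xi)$. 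For the second component I set $V_{2}=w+\eta$, where $\eta$ is a small correction compensating the tail of $\int h_{2}e^{V_{2}-V_{1}}$ generated by the bubble.

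Writing $(v_{1},v_{2})=(V_{1}+\phi_{1},V_{2}+\phi_{2})$, the linearisation of (\ref{s}) at $(V_{1},V_{2})$ carries an approximate three-dimensional kernel spanned by $\partial_{\lambda}V_{1}$ and $\partial_{\xi_{j}}V_{1}$, coming from the scaling and translation symmetries of the limiting Liouville equation on $\mathbb{R}^{2}$. Outside this kernel, the linearisation decouples to leading order into the linearisation of the shadow system (\ref{sy}) on the outer scale, and the classical Liouville linearisation on the inner scale: the non-degeneracy of $(p,w)$ furnishes uniform invertibility of the former, while the well-known kernel description of the Liouville equation handles the latter. A contraction argument in a weighted $L^{\infty}$ norm then provides $\phi_{1},\phi_{2}$ with $\|\phi_{i}\|\lesssim \lambda^{-2}\log\lambda$, solving the full system modulo a linear combination of the three approximate kernel elements.

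It remains to solve the finite-dimensional reduced problem obtained by testing the residual equation against each approximate kernel element. The two equations associated with $\partial_{\xi_{j}}V_{1}$ reproduce, as $\lambda\to\infty$, exactly the critical-point condition in the second equation of (\ref{sy}), so the non-degeneracy of $(p,w)$ yields, via the implicit function theorem, $\xi=\xi(\lambda)\to p$. The equation associated with $\partial_{\lambda}V_{1}$, expanded to leading order, takes the form
\begin{equation*}
\varepsilon \;=\; \frac{c\,l(p)\,\log\lambda}{\lambda^{2}} \;+\; o\!\left(\frac{\log\lambda}{\lambda^{2}}\right)
\end{equation*}
with $c\neq 0$ a universal constant and $l(p)$ the quantity in (\ref{l(p)}). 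Since $l(p)\neq 0$ by hypothesis, this scalar relation can be inverted to produce $\lambda=\lambda(\varepsilon_{k})\to\infty$ for any sequence $\varepsilon_{k}\to 0$, giving the desired bubbling family $(v_{1k},v_{2k})$.

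The main technical obstacle will be the derivation of the reduced $\lambda$-equation with the \emph{exact} coefficient $l(p)$. This requires expanding $\int_{M}h_{1}e^{V_{1}-V_{2}}$ and $\int_{M}h_{2}e^{V_{2}-V_{1}}$ to order $\lambda^{-2}\log\lambda$ and identifying the four contributions that assemble into $l(p)$: the conformal factor of $g$ at $p$ (the $-2K(p)$ term), the local expansion of $\log h_{1}$ at $p$ (the $\Delta\log h_{1}(p)$ term), the coupling with $w$ handled via the second equation of (\ref{sy}) (the $-\rho_{2}$ term), and the universal $+8\pi$ coming from the Liouville profile. Keeping these expansions coordinate-invariant, while correctly accounting for the projection onto mean-zero functions and for the interaction between the bubble scale and the outer Green-function singularity, is the most delicate step.
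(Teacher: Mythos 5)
Your proposal takes a genuinely different route from the paper. The paper does not construct the bubbling solutions directly: it defines the set $S_{\rho_1}(p,w)\times S_{\rho_2}(p,w)$ of candidate bubbling profiles (Section 4), computes the Leray--Schauder degree $d_T(p,w)$ of $(v_1+T_1,v_2+T_2)$ on that set (Sections 5--6), and reads off existence from $d_T(p,w)=-\mathrm{sgn}(\rho_1-8\pi)\,d_S(p,w)=\mp 1\neq0$; that is, Theorem \ref{bubb} is declared to follow \emph{a posteriori} from Theorem \ref{th1.4}. You instead propose a constructive Lyapunov--Schmidt gluing, in the spirit of d'Aprile--Pistoia--Ruiz for the Toda system. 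The trade-off is clear: the degree argument is free once the machinery needed anyway for Theorem \ref{th1.6} is in place, while the gluing route yields sharper asymptotics but requires building the contraction and reduction from scratch. Both strategies are standard and legitimate here, and your identification of $l(p)\log\lambda/\lambda^{2}$ as the rate of $\rho_{1k}-8\pi$ agrees (after the change of variable $e^{\lambda_{\mathrm{paper}}}\sim\lambda^{2}$) with the paper's \eqref{4.12}.

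There is, however, a gap in your inversion step. You assert that, outside the three-dimensional approximate kernel, ``the non-degeneracy of $(p,w)$ furnishes uniform invertibility'' of the outer linearization. But non-degeneracy of $(p,w)$ for the shadow system \eqref{sy} means invertibility of the \emph{coupled} operator $\mathcal{M}_1$ acting on pairs $(\nu,\Psi)\in\mathbb{R}^2\times\mathring{W}^{2,\mathfrak{p}}$ (see \eqref{6.11}); it does \emph{not} imply that the pure mean-field linearization
$$
L\Psi=\Delta\Psi+\rho_2\frac{h_2e^{w-8\pi G(x,p)}}{\int_Mh_2e^{w-8\pi G(x,p)}}\Psi-\rho_2\frac{h_2e^{w-8\pi G(x,p)}}{\big(\int_Mh_2e^{w-8\pi G(x,p)}\big)^2}\int_Mh_2e^{w-8\pi G(x,p)}\Psi
$$
is invertible on mean-zero functions. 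Once $\partial_\lambda V_1,\partial_{\xi_j}V_1$ are projected out, the $\nu$-directions are no longer available to the contraction, and solving for $\phi_2$ then requires invertibility of $L$ alone, which can fail even when $(p,w)$ is a non-degenerate shadow solution. The fix is to enlarge the finite-dimensional reduction by $\ker L$: the resulting reduced system in $(\xi,\ker L)$ has linearization exactly $\mathcal{M}_1$, which the non-degeneracy hypothesis does invert, and the remaining scalar $\lambda$-equation closes via $l(p)\neq0$ as you describe. With that amendment your argument is sound; without it the contraction step is not justified.
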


\medskip

Roughly speaking, the proof of the above result will follow by considering the solutions of \eqref{s} in the set of the bubbling solutions satisfying (i) and (ii) of  Theorem \ref{th1.3} and showing that the associated degree is not zero. More precisely, we will get the conclusion of Theorem \ref{bubb} once we prove Theorem \ref{th1.4}, see Sections \ref{sec:blowup}, \ref{sec:operator} and \ref{sec:proof}.

\medskip

Due to the presence of this kind of solutions, we need to compute the topological degree of (\ref{s}) contributed by these bubbling solutions. In particular we will see it is enough to consider the bubbling solutions contained in the subset $S_{\rho_1}(p,w)\times S_{\rho_2}(p,w)$, see \eqref{4.14} and \eqref{4.15} in Section 4 for the definition of $S_{\rho_i}(p,w),~i=1,2.$ Let $d_T(p,w)$ denote the degree contributed by the solutions $(v_{1k},v_{2k})\in S_{\rho_1}(p,w)\times S_{\rho_2}(p,w)$ and $d_S(p,w)$ denote the degree of the shadow system (\ref{sy}) contributed by the Morse index of $(p,w)$. Then we have the following result (see Section \ref{sec:proof} for the argument concerning the following results).
\begin{theorem}
\label{th1.4}
Let $\rho_2\notin 8\pi\mathbb{N}$ and suppose that $(p,w)$ is a non-degenerate solution of (\ref{sy}) and $l(p)\neq 0$, where $l(p)$ is given in \eqref{l(p)}. Let $d_T(p,w)$ and $d_S(p,w)$ be defined as above. Then
\begin{equation*}
d_T(p,w)=-\mathrm{sgn}(\rho_1-8\pi)\,d_S(p,w).
\end{equation*}
\end{theorem}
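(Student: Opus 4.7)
The plan is to carry out a finite-dimensional Lyapunov--Schmidt reduction around a family of approximate solutions concentrating at $p$. Fix a non-degenerate solution $(p,w)$ of \eqref{sy} with $l(p)\neq 0$, and look for solutions of \eqref{s} of the form $v_{1}=U_{q,\lambda}+\phi_1$ and $v_{2}=w+\phi_2$, where $q\in M$ is a would-be concentration point close to $p$, $\lambda\gg 1$ is a scale parameter, $U_{q,\lambda}$ is the projection on $M$ of the standard Liouville bubble $\log\frac{8\lambda^2}{(1+\lambda^2|x-q|^2)^2}$ corrected by the regular part of the Green function $G(\cdot,q)$, and $(\phi_1,\phi_2)$ is a small remainder. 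The parameters $(q,\lambda)$ play the role of finite-dimensional moduli coming from the translation and scaling invariance of the Liouville equation on $\mathbb{R}^2$.

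After plugging the ansatz into \eqref{s}, the linearized operator at $(U_{q,\lambda},w)$ decouples at leading order into the linearization of the Liouville equation at the bubble plus the linearization of the shadow system at $w$. The former has a three-dimensional approximate kernel spanned by $\partial_{q_1}U_{q,\lambda}$, $\partial_{q_2}U_{q,\lambda}$ and $\partial_\lambda U_{q,\lambda}$; the latter is invertible thanks to the non-degeneracy of $(p,w)$ for \eqref{sy}. A standard Lyapunov--Schmidt argument will then produce a unique small correction $(\phi_1(q,\lambda),\phi_2(q,\lambda))$ solving \eqref{s} modulo these three approximate kernel modes, uniformly in $(q,\lambda)$ in a neighborhood of $\{p\}\times[\Lambda_0,\infty)$, with quantitative estimates analogous to those in \cite{cl3}.

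The bifurcation equations then reduce to a map $\mathbb{F}(q,\lambda;\rho_1)\in\mathbb{R}^3$ whose zeros are genuine solutions of \eqref{s} in $S_{\rho_1}(p,w)\times S_{\rho_2}(p,w)$. Expanding $\mathbb{F}$ via the Green representation and the definition of $l(p)$ should yield a horizontal component
\begin{equation*}
\mathbb{F}_{\text{hor}}(q,\lambda;\rho_1)=\nabla\bigl(\log h_1(q)-w(q)+4\pi R(q,q)\bigr)+o(1)\qquad(\lambda\to\infty),
\end{equation*}
which matches the second equation of \eqref{sy}, and a scaling component of the form
\begin{equation*}
\mathbb{F}_{\text{scal}}(q,\lambda;\rho_1)=c_0(\rho_1-8\pi)\log\lambda\,-\,l(q)+o(1),
\end{equation*}
with $c_0>0$ a universal constant. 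Non-degeneracy of $(p,w)$ for \eqref{sy} makes the Jacobian of $\mathbb{F}_{\text{hor}}$ in $q$ at $(p,\infty)$ invertible, and its signed determinant realizes the Morse-index contribution $d_S(p,w)$. The assumption $l(p)\neq 0$ then forces the branch of bubbling solutions to exist only on the side of $\rho_1=8\pi$ on which $l(p)$ and $\rho_1-8\pi$ have matching signs.

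The local topological degree $d_T(p,w)$ is the Brouwer degree of $\mathbb{F}$ at the origin, which factorizes as the product of the sign of the Jacobian of $\mathbb{F}_{\text{hor}}$ in $q$ and the sign of $\partial_\lambda\mathbb{F}_{\text{scal}}$ evaluated on the branch. From the expansion above one reads $\partial_\lambda\mathbb{F}_{\text{scal}}=c_0(\rho_1-8\pi)/\lambda+o(\lambda^{-1})$, whose sign on the bubbling branch, after accounting for the orientation reversal caused by parametrizing the branch by $\lambda\to+\infty$ instead of by $\rho_1$, equals $-\mathrm{sgn}(\rho_1-8\pi)$. Combining these two contributions gives the claimed identity $d_T(p,w)=-\mathrm{sgn}(\rho_1-8\pi)\,d_S(p,w)$. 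The main obstacle will be establishing the precise asymptotic form of $\mathbb{F}_{\text{scal}}$ with the correct sign and constant $c_0>0$; this requires sharp pointwise estimates on $\phi_1$ away from and near the concentration point, together with a Pohozaev-type identity of the kind used in \cite{cl3} to isolate the combination $\Delta\log h_1(p)-\rho_2+8\pi-2K(p)$ from the integrals $\int_M h_1 e^{U_{q,\lambda}-w}$.
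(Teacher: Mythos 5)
Your proposal misses a key structural feature of the degree computation: it treats the $v_2$-perturbation $\phi_2$ as something to be inverted away in the Lyapunov--Schmidt tangential step (``the latter is invertible thanks to the non-degeneracy of $(p,w)$''), and then attributes the entire shadow-system contribution $d_S(p,w)$ to the Jacobian of the horizontal map $\mathbb{F}_{\mathrm{hor}}$ in $q$ alone. This cannot be right. The degree $d_S(p,w)$ is, by definition, $(-1)$ raised to the Morse index of the \emph{coupled} linearization of \eqref{sy} in the variables $(p,w)$, and that linearization genuinely mixes the $p$-equation and the $w$-equation; reading it off from the $2\times 2$ Jacobian in $q$ of the reduced horizontal map is not possible, and the Morse index of the $\phi_2$-directions does not vanish just because the coupled system is non-degenerate. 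In the paper's argument this is handled by \emph{not} inverting $\psi=v_2-w$: the map $\Phi_p$ keeps $\psi$ as an (infinite-dimensional) variable, and the block $\mathcal{M}_1=\partial(\Phi_{p,1},\Phi_{p,4})/\partial(p,\psi)$ is shown in \eqref{6.11} to coincide exactly with the linearized operator of the shadow system, so that $(-1)^{N(\mathcal{M}_1)}=d_S(p,w)$. Your reduction throws away precisely the information encoded in the $\psi$-column of that block.

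There is a second, smaller discrepancy: your finite-dimensional moduli are $(q,\lambda)\in M\times\mathbb{R}$, a three-parameter family, while the paper works with the four-parameter family $(q,\lambda,a)$ where $a$ is an amplitude close to $1$ (i.e.\ $v_{q,\lambda,a}=a(v_q-\bar v_q)$); correspondingly the orthogonal complement of $O^{(1)}_{q,\lambda}$ is spanned by $v_q$, $\partial_\lambda v_q$, $\partial_q v_q$, which is four-dimensional. The extra mode $v_q$ (equivalently the parameter $a$) is what makes the off-diagonal structure in \eqref{6.8} block-diagonalize cleanly, separating the $(\lambda,a)$-block $\mathcal{M}_2$ from the $(q,\psi)$-block $\mathcal{M}_1$. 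Dropping $a$ does not obviously give you this decoupling. Finally, your extraction of the factor $-\mathrm{sgn}(\rho_1-8\pi)$ from ``orientation reversal caused by parametrizing the branch by $\lambda$'' is heuristic; the paper gets it from the concrete identity $\partial\Phi_{p,2}/\partial\lambda = -(\rho_1-8\pi)+O(e^{-\lambda})$, itself a consequence of \eqref{6.10} which relates $\rho_1-8\pi$ to $\Delta H(q)\lambda e^{-\lambda}$ (here $\Delta H$ carries the quantity $l$). You would need a comparable sharp computation, and as you note yourself this is the hard step; but until the coupling issue above is repaired, the reduction scheme does not match the quantity you are trying to compute.
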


\medskip

\noindent Once we get Theorem \ref{th1.4}, it is natural for us to consider the degree of the shadow system. The idea of solving this problem is to decouple the system \eqref{sy} and then we use Theorem A to get the degree of the first equation in \eqref{sy}. The explicit result is stated in the following:
\begin{theorem}
\label{th1.5}
Assume $\rho_2\notin 8\pi\N$. Then the set of solutions $(p,w)$ for (\ref{sy}) is pre-compact in the space
$M\times\mathring{H}_1(M)$. Let $d_S$ denote the topological degree for $(\ref{sy})$. Then
\begin{equation}
\label{1.18}
d_S=\chi(M) \bigr(b_k+b_{k-1}+b_{k-2}\bigr), \qquad \rho_2\in(8k\pi,8(k+1)\pi),
\end{equation}
where $b_{-1}=b_{-2}=0.$
\end{theorem}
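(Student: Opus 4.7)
The plan is to first establish pre-compactness by reducing the first equation of \eqref{sy} to a Liouville-type equation with a single conic source, and then to compute $d_S$ by a homotopy that decouples the system into a finite-dimensional vector field on $M$ and a singular Liouville equation on $\mathring{H}^1$, whose degrees are respectively governed by Poincar\'e--Hopf and Theorem~A.

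For pre-compactness, I will fix $p$ and make the substitution $\tilde u:=w-8\pi G(\cdot,p)$ in the first equation of \eqref{sy}, which yields
$$-\D\tilde u = \rho_2\left(\frac{h_2 \,e^{\tilde u}}{\int_M h_2\, e^{\tilde u}}-1\right)-8\pi(\delta_p-1).$$
This is precisely \eqref{eq2} with $S=\{p\}$ and $\alpha_p=2$; for this configuration the critical set $\Sg$ in \eqref{Sigma} collapses to $8\pi\N$, so the assumption $\rho_2\notin 8\pi\N$ puts us in the compactness regime for \eqref{eq2}. The standard quantization analysis cited in the introduction therefore yields uniform $C^{2,\beta}$ bounds on $\tilde u$, and since $G(\cdot,p)$ depends continuously on $p$ these bounds are uniform in $p\in M$. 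Together with the compactness of $M$, this proves pre-compactness of the solution set in $M\times\mathring{H}^1$, so $d_S$ is well defined.

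For the degree, I will pick a Morse function $H\in C^\infty(M)$ with non-degenerate zeros $\{p_i\}$ of $\n H$ and homotope only the second equation of \eqref{sy} to $\n H(p)=0$:
$$(1-t)\,\n_x\bigl[\log h_1(x)-w(x)+4\pi R(x,x)\bigr]_{x=p}+t\,\n H(p)=0,\qquad t\in[0,1].$$
Since the first equation is untouched, the pre-compactness above still applies uniformly in $t$; no solutions escape, so $d_S$ equals the degree of the $t=1$ system by the homotopy invariance of the Leray--Schauder degree. At $t=1$ the second equation depends only on $p$ and is solved exactly at the points $p_i$; near each $(p_i,w)$ the linearization is upper block triangular because the $\n H$ block is independent of $w$, so the local degree factorizes as $\mathrm{ind}_H(p_i)\cdot d^{(1)}(p_i)$, where $d^{(1)}(p_i)$ denotes the degree on $\mathring{H}^1$ of the $w$-equation at $p=p_i$. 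Moving $p$ continuously on the connected manifold $M$ and invoking homotopy invariance of $d^{(1)}(\cdot)$ shows that $d^{(1)}(p_i)$ is independent of $i$; call it $d^{(1)}$. Summing and applying Poincar\'e--Hopf then gives $d_S=\chi(M)\cdot d^{(1)}$. Finally, the $w$-equation is \eqref{eq2} with $|S|=1$ and $\alpha_p=2$, so by Remark~\ref{remark} the modified generating function factorizes as $\Xi_1(x)=(1+x+x^2)\sum_{j\ge 0}b_j x^j$, and Theorem~A yields $d^{(1)}=b_k+b_{k-1}+b_{k-2}$ for $\rho_2\in(8k\pi,8(k+1)\pi)$, establishing \eqref{1.18}.

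The main obstacle will be justifying cleanly the product formula at $t=1$. Although the block-triangular structure of the linearization makes the factorization plausible, one must carry out a careful local analysis in a tubular neighborhood of each $(p_i,w_i)$ in $M\times\mathring{H}^1$, split the linearization into its finite-dimensional and Leray--Schauder components, and verify that degree theory on this mixed space behaves as expected under the splitting. The Morse non-degeneracy of $H$ and the pre-compactness established in the first step are exactly what supply the needed transversality.
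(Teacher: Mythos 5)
Your proof is essentially the same as the paper's: both decouple the system via a homotopy on the second equation (the paper deforms to $\nabla\bigl(\log h_1+4\pi R(x,x)\bigr)|_{x=p}=0$, you to an arbitrary $\nabla H(p)=0$ for a Morse function $H$, but either way Poincar\'e--Hopf gives $\chi(M)$), establish compactness by identifying the $w$-equation with \eqref{eq2} for $|S|=1$, $\alpha_p=2$ so that $\Sigma$ reduces to $8\pi\N$, and then multiply the finite-dimensional degree by the value $b_k+b_{k-1}+b_{k-2}$ supplied by Theorem~A. Your block-triangular factorization argument and the observation that $d^{(1)}(p)$ is independent of $p$ merely spell out steps the paper leaves implicit.
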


\medskip

\noindent Finally, by using the Theorems \ref{th1.4}, \ref{th1.5} and the fact that $d_{SG} = d_{\rho_2}$ for $\rho_1<8\pi$, which is given in Theorem A (see also Remark \ref{remark}), we can derive the following main result of the paper:
\begin{theorem}
\label{th1.6}
Let $d_{SG}$ denote the topological degree for (\ref{liouv}), then
\begin{align*}
d_{SG}=
\left\{\begin{array}{ll}
b_k,&\rho_1\in(0,8\pi),\\
b_k-\chi(M)\bigr(b_k+b_{k-1}+b_{k-2}\bigr),&\rho_1\in(8\pi,16\pi),
\end{array}\right. \qquad \rho_2\in(8k\pi,8(k+1)\pi),
\end{align*}
where $b_{-1}=b_{-2}=0.$
\end{theorem}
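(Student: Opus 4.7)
My plan is to combine Theorem \ref{th1.2} with the blow-up analysis of the preceding theorems to reduce the problem to two explicit degree computations, one in each interval for $\rho_1$. Theorem \ref{th1.2} gives $d_{SG} = d_s$, where $d_s$ is the degree of the system \eqref{s}. The a priori estimate in Theorem \ref{th1.1} together with the homotopy invariance of the Leray--Schauder degree implies that $d_s$ is constant on each connected component of the admissible parameter region $\{\rho_i \notin 8\pi\N\}$, so it suffices to evaluate $d_s$ at one representative value of $\rho_1$ in each of $(0,8\pi)$ and $(8\pi, 16\pi)$, with $\rho_2 \in (8k\pi, 8(k+1)\pi)$ held fixed.

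\textbf{Case $\rho_1 \in (0,8\pi)$.} I would deform $\rho_1 \downarrow 0$. Since $\rho_2 \notin 8\pi\N$, Theorem \ref{th1.1} provides a uniform a priori bound along the whole deformation, so the degree is preserved. In the limit $\rho_1 = 0$, equation \eqref{liouv} becomes a single mean field equation in $v := -u$ of the form \eqref{eq2} with $h = h_2$, $S = \emptyset$ and parameter $\rho_2$. Theorem A in the regular case (see Remark \ref{remark}) then yields $d_{SG} = b_k$ for $\rho_2 \in (8k\pi, 8(k+1)\pi)$.

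\textbf{Case $\rho_1 \in (8\pi,16\pi)$.} Here the task is to compute the jump $d_s(8\pi^+) - d_s(8\pi^-)$. Since $d_s$ does not depend on the choice of the positive functions $h_1, h_2$ (again by homotopy invariance), the transversality argument described after Theorem \ref{th1.3} allows me to perturb $h_1, h_2$ so that every solution $(p,w)$ of the shadow system \eqref{sy} is non-degenerate and satisfies $l(p) \ne 0$. Theorems \ref{th1.3} and \ref{bubb} then set up a bijection between such $(p,w)$ and the blow-up families of \eqref{s} with $\rho_{1k} \to 8\pi$. An excision argument, based on the uniform bound away from the blow-up configurations, lets me decompose the jump as a sum over these families of local degree contributions. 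For each $(p,w)$, Theorem \ref{th1.4} gives $d_T(p,w) = -\mathrm{sgn}(\rho_1 - 8\pi)\,d_S(p,w)$; examining which side of $8\pi$ each family bubbles on shows that its net contribution to $d_s(8\pi^+) - d_s(8\pi^-)$ is $-d_S(p,w)$ in either case. Summing and invoking Theorem \ref{th1.5} gives
\begin{equation*}
d_s(8\pi^+) - d_s(8\pi^-) \;=\; -\sum_{(p,w)} d_S(p,w) \;=\; -\chi(M)\bigl(b_k + b_{k-1} + b_{k-2}\bigr).
\end{equation*}
Combining with Case 1 yields $d_{SG} = b_k - \chi(M)(b_k + b_{k-1} + b_{k-2})$ for $\rho_1 \in (8\pi, 16\pi)$, as claimed.

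The main delicate point is the excision step in Case 2. One has to isolate the bubbling families from the uniformly bounded solutions of \eqref{s} and verify that their local contributions combine, with the signs dictated by Theorem \ref{th1.4}, into the global jump of $d_s$. The non-degeneracy of $(p,w)$ and the condition $l(p) \ne 0$ secured by the transversality perturbation of $h_1, h_2$ are exactly what make each bubbling family an isolated cluster in the solution set with a well-defined local degree $d_T(p,w)$, so that Theorem \ref{th1.4} applies pointwise in the sum.
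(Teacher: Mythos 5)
Your proposal is correct and follows essentially the same route as the paper: compute $d_{SG}$ for $\rho_1\in(0,8\pi)$ via Theorem A, then account for the jump across $\rho_1=8\pi$ by summing the local degrees $d_T(p,w)=-\mathrm{sgn}(\rho_1-8\pi)\,d_S(p,w)$ over non-degenerate shadow solutions (isolated thanks to the transversality perturbation making $l(p)\ne0$), using the fact that each $(p,w)$ bubbles only on the side of $8\pi$ dictated by $\mathrm{sgn}\,l(p)$ so its net jump contribution is $-d_S(p,w)$, and finally invoking Theorem \ref{th1.5}. The one step you compress --- that $\mathrm{sgn}(\rho_1-8\pi)=\mathrm{sgn}(l(p))$ --- is exactly Remark \ref{rem:segno} in the paper, and it is indeed what makes the ``$-d_S(p,w)$ in either case'' bookkeeping close.
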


\

\noindent It is easy to see that when $\chi(M)\leq 0$ we can get $b_k>0$ and then $d_{SG}>0$. Therefore we can prove the following existence result in \cite{m5}.

\begin{corollary}
Let $\rho_1 \in (0,8\pi)\cup(8\pi,16\pi), \rho_2\notin 8\pi\N$ and suppose $\chi(M)\leq 0$. Then $d_{SG}>0$ and the equation \eqref{liouv} has a solution.
\end{corollary}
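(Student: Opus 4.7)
The plan is to read off the corollary as an immediate consequence of the degree formula in Theorem \ref{th1.6}. By Theorem \ref{th1.1} the solution set of \eqref{liouv} is uniformly bounded for $\rho_1\notin 8\pi\N$ and $\rho_2\notin 8\pi\N$, so the Leray--Schauder degree $d_{SG}$ is well defined on the stated range. Once one shows $d_{SG}\neq 0$, the standard degree-theoretic argument yields a solution.

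To see that $d_{SG}>0$ under the hypothesis $\chi(M)\le 0$, I would first note that, by the explicit formula \eqref{b_k},
\[
b_k=\binom{k-\chi(M)}{k}=\frac{(k-\chi(M))(k-1-\chi(M))\cdots(1-\chi(M))}{k!}\qquad(k\ge1),
\]
and $b_0=1$. Since $-\chi(M)\ge 0$, every factor in the numerator is a positive integer, so $b_k\ge 1>0$ for all $k\ge 0$. In particular, for $\rho_2\in(8k\pi,8(k+1)\pi)$ and $\rho_1\in(0,8\pi)$, Theorem \ref{th1.6} gives $d_{SG}=b_k>0$.

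For the second case $\rho_1\in(8\pi,16\pi)$, Theorem \ref{th1.6} yields
\[
d_{SG}=b_k-\chi(M)\bigl(b_k+b_{k-1}+b_{k-2}\bigr),
\]
with the convention $b_{-1}=b_{-2}=0$. The first term is strictly positive by the previous step. The second term $-\chi(M)(b_k+b_{k-1}+b_{k-2})$ is a product of a non-negative number $-\chi(M)\ge 0$ and a non-negative sum of binomial coefficients, hence is $\ge 0$. Adding the two contributions gives $d_{SG}>0$.

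Finally, since the Leray--Schauder degree of the compact perturbation of the identity associated with \eqref{liouv} is non-zero, the equation admits at least one solution in $\mathring H^1(M)$. The argument is entirely elementary once Theorem \ref{th1.6} is in hand; the only subtlety is the sign of the correction term $-\chi(M)(b_k+b_{k-1}+b_{k-2})$, which could in principle cancel $b_k$ in other ranges of $\chi(M)$ but cannot when $\chi(M)\le 0$. I do not expect any genuine obstacle here: the result is a clean corollary of the degree formula, and the only care needed is to check the binomial identity for $b_k$ and to handle the boundary cases $k=0,1,2$ via the convention $b_{-1}=b_{-2}=0$.
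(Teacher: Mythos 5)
Your proposal is correct and follows the same route as the paper: it reads $d_{SG}>0$ off Theorem~\ref{th1.6} using $b_k\ge 1$ when $\chi(M)\le 0$ and the non-negativity of the correction term $-\chi(M)(b_k+b_{k-1}+b_{k-2})$. The paper gives only the one-line remark that $b_k>0$ implies $d_{SG}>0$; you supply the (small) missing detail about the sign of the correction term in the range $\rho_1\in(8\pi,16\pi)$.
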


\

\noindent When $M$ is a sphere, we can get $d_{SG} = -1$ for $\rho_1, \rho_2\in (8\pi,16\pi)$ by direct computations. This result confirms the fact of the degree is odd stated in \cite{jev3} and gives a new proof for the following existence result in \cite{jev}.

\begin{corollary}
Let $\rho_1, \rho_2\in (8\pi,16\pi)$ and suppose $M$ is a sphere. Then $d_{SG}=-1$ and the equation \eqref{liouv} has a solution.
\end{corollary}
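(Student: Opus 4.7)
The plan is to derive the corollary as an immediate specialization of Theorem \ref{th1.6} to $M = S^2$. First I would record that $\chi(S^2) = 2$, and that with $\rho_2 \in (8\pi, 16\pi)$ we are in the branch $k = 1$ of the formula, while $\rho_1 \in (8\pi, 16\pi)$ falls in the second case of the piecewise definition. Thus only the four quantities $b_{-2}, b_{-1}, b_0, b_1$ enter the computation.

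I would then evaluate these via the definition \eqref{b_k} with $\chi(M) = 2$: directly, $b_0 = 1$, $b_1 = (1 - 2)/1! = -1$, and $b_{-1} = b_{-2} = 0$ by the convention stated in Theorem \ref{th1.6}. Substituting,
$$d_{SG} = b_1 - \chi(M)\bigl(b_1 + b_0 + b_{-1}\bigr) = -1 - 2\bigl(-1 + 1 + 0\bigr) = -1.$$
Hence $d_{SG} \neq 0$, and the a priori bound of Theorem \ref{th1.1} (which applies since $\rho_1, \rho_2 \notin 8\pi\N$) combined with the homotopy invariance of the Leray-Schauder degree delivers a solution of \eqref{liouv}.

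There is no real obstacle; all the substantive work sits inside Theorem \ref{th1.6}. The only potential pitfall is the indexing convention---one must remember that the index $k$ in Theorem \ref{th1.6} labels the interval $(8k\pi, 8(k+1)\pi)$ starting from $k = 0$, so $\rho_2 \in (8\pi, 16\pi)$ corresponds to $k = 1$ rather than $k = 2$. Once this is fixed, the arithmetic is completely elementary, and the final value $-1$ is in particular consistent with the parity statement of \cite{jev3}.
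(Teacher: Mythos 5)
Your proof is correct and follows exactly the route the paper intends: a direct substitution into Theorem \ref{th1.6} with $\chi(S^2)=2$, $k=1$, $b_0=1$, $b_1=-1$, $b_{-1}=b_{-2}=0$, giving $d_{SG}=-1-2(-1+1+0)=-1$, whence existence follows since the degree is nonzero. The paper itself only says the value follows ``by direct computations,'' and yours is precisely that computation, including the correct identification of $k=1$ for $\rho_2\in(8\pi,16\pi)$.
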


\

\noindent Compared with the Toda system, see \cite{lwy}, we have $d_{SG}=0$ for $\rho_1\in (8\pi,16\pi)$, $\rho_2\in (16\pi, 24\pi)$ and we can not deduce the existence of solutions to \eqref{liouv}. Furthermore, we can get a new existence result when the underlying manifold is a sphere when $\rho_1\in (8\pi,16\pi)$, $\rho_2\in (24\pi, 32\pi)$.

\begin{theorem}
Let $\rho_1\in (8\pi,16\pi)$, $\rho_2\in (24\pi, 32\pi)$ and suppose $M$ is a sphere. Then $d_{SG}=2$ and the equation \eqref{liouv} admits solutions.
\end{theorem}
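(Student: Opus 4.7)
The plan is to derive both assertions of this theorem as a direct consequence of the degree-counting formula in Theorem~\ref{th1.6}, together with the a-priori bound of Theorem~\ref{th1.1}. Since the prescribed parameters satisfy $\rho_1,\rho_2 \notin 8\pi\mathbb{N}$, Theorem~\ref{th1.1} guarantees a uniform bound on solutions, so the Leray--Schauder degree $d_{SG}$ is well-defined and producing any nonzero value for it automatically yields existence.

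First I would identify the parameters entering Theorem~\ref{th1.6}: $M = S^2$ gives $\chi(M) = 2$, while $\rho_2 \in (24\pi, 32\pi)$ corresponds to the index $k = 3$. Next I would evaluate the coefficients $b_k$ from definition~\eqref{b_k} with $\chi(M) = 2$, obtaining
\begin{equation*}
b_1 = \binom{-1}{1} = -1, \qquad b_2 = \binom{0}{2} = 0, \qquad b_3 = \binom{1}{3} = 0,
\end{equation*}
where $b_2$ and $b_3$ vanish because the factor $2 - \chi(M) = 0$ appears in the respective products.

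Substituting into the second case of Theorem~\ref{th1.6}, valid for $\rho_1 \in (8\pi, 16\pi)$, gives
\begin{equation*}
d_{SG} = b_3 - \chi(M)\bigl(b_3 + b_2 + b_1\bigr) = 0 - 2\bigl(0 + 0 + (-1)\bigr) = 2.
\end{equation*}
Since $d_{SG} = 2 \neq 0$, the standard consequence of Leray--Schauder degree theory, combined with the a-priori bound of Theorem~\ref{th1.1}, immediately delivers a solution to \eqref{liouv}.

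The genuinely hard work has of course already been carried out to establish Theorem~\ref{th1.6}, which rests on the concentration analysis, the shadow-system reduction, the transversality and non-degeneracy arguments, and the computation of the jump $d_T(p,w)$ developed in the body of the paper. At the level of the present statement, however, there is no real obstacle: the verification reduces to a short arithmetic check of the binomial coefficients for $\chi(S^2) = 2$, and existence is automatic from the non-vanishing of the degree.
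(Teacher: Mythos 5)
Your computation is correct and follows exactly the route the paper intends: plug $\chi(S^2)=2$ and $k=3$ into the formula of Theorem~\ref{th1.6}, evaluate $b_1=-1$, $b_2=0$, $b_3=0$ from \eqref{b_k}, and conclude $d_{SG}=0-2(0+0-1)=2\neq0$, whence existence follows via Theorem~\ref{th1.1} and degree theory. The paper gives no separate proof for this theorem beyond the phrase ``by direct computations,'' and your write-out is precisely that computation.
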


\

The paper is organized as follows. In Section \ref{sec:equiv} we will prove the a-priori bound of the solutions to equation \eqref{liouv} and we will establish the degree equivalency of problems \eqref{liouv} and \eqref{s}. In Section \ref{sec:shadow} we will study the blow up phenomena for $\rho_{1k}\to 8\pi, \, \rho_2 \notin 8\pi\mathbb{N}$ and we will derive the shadow system \eqref{sy}. In Section \ref{sec:blowup} we describe the set of all the possible bubbling solutions of the equation \eqref{s}. In Section \ref{sec:operator} we use the description of the bubbling solutions obtained in Section \ref{sec:blowup} to get the leading terms of the projections associated to the degree problem. In Section \ref{sec:proof} we give the proofs for Theorems \ref{bubb}-\ref{th1.6}. Finally, in the Appendix we present some useful estimates.

\vspace{1cm}
\section{The concentration phenomenon and the equivalent formulation} \label{sec:equiv}

\medskip

In this section we will start by giving the proof of the a-priori bounds of the solutions to equation \eqref{liouv}, see Theorem \ref{th1.1}. The main ingredient will be the concentration phenomena of Lemma \ref{le2.2}. Then we will prove Theorem \ref{th1.2}, namely that the two problems \eqref{liouv} and \eqref{s} are equivalent for what concerns the degree theory.

In order to prove Theorem \ref{th1.1} we need the following preparations. For a sequence of bubbling solution $u_k$ of (\ref{liouv}), we set
\begin{align*}
&u_{k,1}=u_k-\log\int_Mh_1e^{u_k},\\
&u_{k,2}=-u_k-\log\int_Mh_2e^{-u_k}.
\end{align*}
Then, we have
$$
  - \D u_{k,1} = \rho_1 \bigr( h_1 \,e^{u_{k,1}}- 1 \bigr) - \rho_2 \bigr(h_2 \,e^{-u_{k,2}} - 1\bigr) \quad \mbox{on } M.
$$
The blow up sets for $u_{k,1}$ and $u_{k,2}$ are given by
\begin{equation}
\label{2.1}
\mathfrak{S}_i=\biggr\{p\in M\; : \; \exists\{x_k\}\subset M,~x_k\rightarrow p,~\lim_k u_{k,i}(x_k)\rightarrow+\infty\biggr\}, \quad i=1,2,
\end{equation}
and we define $\mathfrak{S}=\mathfrak{S}_1\cup\mathfrak{S}_2.$ By using the Jensen's inequality and recalling that we are working in $\mathring{H}^1$ we have
\begin{align*}
u_k=u_{k,1}+\int_Mh_1e^{u_k}\geq u_{k,1}+Ce^{\int_Mu_k}\geq u_{k,1}+C
\end{align*}
and similarly for $u_{k,2}.$ Therefore, we deduce that if $p$ is a blow up point of $u_{k,1}$ or $u_{k,2}$, then $p$ is also a blow up point of $u_k$ or $-u_k$ respectively. For any $p\in\mathfrak{S}$, we finally define the local mass by
\begin{equation}
\label{2.2}
\sigma_{p,i}=\lim_{\delta\rightarrow0}\lim_{k\rightarrow+\infty}\frac{1}{2\pi}\int_{B_{\delta}(p)}\rho_ih_ie^{u_{k,i}},
\end{equation}
which will play a crucial role in proving Theorem \ref{th1.1}. We start by observing that from the result in \cite{jwy} we have $\sigma_{p,i}\geq 4$ for some $i=\{1,2\}$ for any $p\in \mathfrak{S}$. A consequence is that $|\mathfrak{S}|<+\infty$, namely that the blow up points $\mathfrak{S}\subset M$ form a finite set. Moreover, we can prove the following result.

\medskip

\begin{lemma}
\label{le2.1}
Suppose $p\in\mathfrak{S}_i$ for some $i\in\{1,2\}$. Then $\sigma_{p,i}>0.$
\end{lemma}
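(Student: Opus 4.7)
The plan is to argue by contradiction via a Liouville-type blow-up analysis at $p$. Suppose $p\in\mathfrak{S}_1$ but $\sigma_{p,1}=0$ (the case $i=2$ is symmetric). Since $|\mathfrak{S}|<\infty$, first fix $\delta>0$ with $\overline{B_{2\delta}(p)}\cap\mathfrak{S}=\{p\}$ and pick $y_k\in\overline{B_\delta(p)}$ realizing $\max_{\overline{B_\delta(p)}}u_{k,1}$; along a subsequence $y_k\to p$ and $u_{k,1}(y_k)\to+\infty$. Setting $\varepsilon_k=e^{-u_{k,1}(y_k)/2}\to 0$ and rescaling
\begin{equation*}
\tilde u_k(z)=u_{k,1}(y_k+\varepsilon_k z)-u_{k,1}(y_k),\qquad z\in B_{\delta/\varepsilon_k}(0),
\end{equation*}
I obtain $\tilde u_k(0)=0$, $\tilde u_k\leq 0$, and the rescaled equation
\begin{equation*}
-\Delta\tilde u_k=\rho_1 h_1(y_k+\varepsilon_k z)\,e^{\tilde u_k}-A_k(z)+O(\varepsilon_k^2),
\end{equation*}
where $A_k(z):=\rho_2 h_2(y_k+\varepsilon_k z)\,\varepsilon_k^2 e^{u_{k,2}(y_k+\varepsilon_k z)}$.

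The key remark is that the pointwise identity $u_{k,1}+u_{k,2}\equiv -(C_1+C_2)$ with $C_j:=\log\int_M h_j\,e^{(-1)^{j+1}u_k}$, together with Jensen's inequality and $\int_M u_k=0$ (which give $C_1+C_2\geq\log(\min h_1\cdot\min h_2)>-\infty$), allows one to rewrite
\begin{equation*}
A_k(z)=\rho_2 h_2(y_k+\varepsilon_k z)\,e^{-(C_1+C_2)}\,\varepsilon_k^4\,e^{-\tilde u_k(z)},
\end{equation*}
so the prefactor $e^{-(C_1+C_2)}\varepsilon_k^4\to 0$ exponentially fast. Combined with the bound $\rho_1 h_1 e^{\tilde u_k}\leq\rho_1\max h_1$ (from $\tilde u_k\leq 0$), standard elliptic regularity yields, up to subsequence, $\tilde u_k\to\tilde u_\infty$ in $C^2_{\mathrm{loc}}(\R^2)$ with $\tilde u_\infty(0)=0$, $\tilde u_\infty\leq 0$, solving $-\Delta\tilde u_\infty=\rho_1 h_1(p)\,e^{\tilde u_\infty}$ on $\R^2$.

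The Chen--Li classification then forces $\int_{\R^2}\rho_1 h_1(p)\,e^{\tilde u_\infty}=8\pi$. On the other hand, for every $R>0$ one has
\begin{equation*}
\int_{B_R(0)}\rho_1 h_1(y_k+\varepsilon_k z)\,e^{\tilde u_k}\,dz=\int_{B_{R\varepsilon_k}(y_k)}\rho_1 h_1\,e^{u_{k,1}}\leq\int_{B_{2\delta}(p)}\rho_1 h_1\,e^{u_{k,1}},
\end{equation*}
and the right-hand side tends to $2\pi\sigma_{p,1}=0$ as $k\to\infty$. Passing to the limit $k\to\infty$ and then $R\to\infty$ forces $\int_{\R^2}\rho_1 h_1(p)\,e^{\tilde u_\infty}=0$, contradicting the value $8\pi$; hence $\sigma_{p,1}>0$.

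The main obstacle is justifying that $A_k\to 0$ in a strong enough sense to legitimately pass to the Liouville limit: the case $p\notin\mathfrak{S}_2$ is immediate because $u_{k,2}$ is then locally bounded above and $A_k=O(\varepsilon_k^2)$ uniformly, but in the case $p\in\mathfrak{S}_1\cap\mathfrak{S}_2$ one must rule out a concurrent bubble of $u_{k,2}$ at the same scale $\varepsilon_k$ near $y_k$, which could make $e^{-\tilde u_k}$ large enough to counterbalance the prefactor $\varepsilon_k^4$. This exclusion is carried out using the refined blow-up/selection analysis developed in \cite{jwy}, which confines any secondary bubble of $u_{k,2}$ either to a scale strictly larger than $\varepsilon_k$ or to a point at distance $\gg\varepsilon_k$ from $y_k$, so that it disappears from the rescaled limit region.
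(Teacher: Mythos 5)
Your strategy — rescale at the peak of $u_{k,1}$, pass to a Liouville limit, and contradict $\sigma_{p,1}=0$ with the $8\pi$ mass from the Chen--Li classification — is genuinely different from the paper's proof, and the algebra you do is correct (the constancy $u_{k,1}+u_{k,2}\equiv -(C_1+C_2)$ and the rewriting of $A_k$ are exact). However, the proof as written has a real gap in precisely the delicate case $p\in\mathfrak{S}_1\cap\mathfrak{S}_2$, and you recognize this yourself but do not close it. To claim $\tilde u_k\to\tilde u_\infty$ in $C^2_{\mathrm{loc}}$ you need a uniform local lower bound on $\tilde u_k$ (equivalently, control of $A_k$ on compact sets), and this is exactly what the concurrent $u_{k,2}$ bubble threatens to destroy. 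From the rescaled equation one only gets $\Delta\tilde u_k\geq -C$; this gives sub-mean-value control but not a pointwise lower bound, so the usual passage to the limit does not follow for free from $\tilde u_k\leq 0$, $\tilde u_k(0)=0$. More importantly, invoking \cite{jwy} to ``confine any secondary bubble'' is problematic: the result of \cite{jwy} gives the possible local-mass pairs $(2m(m\pm1),2m(m\mp1))$, which \emph{includes} $m=1$, i.e.\ $(\sigma_{p,1},\sigma_{p,2})=(0,4)$. Lemma~\ref{le2.1} is precisely what the paper uses, after \cite{jwy}, to rule out $m=1$ and conclude $m\geq 2$; so appealing to \cite{jwy} to establish what Lemma~\ref{le2.1} asserts is either circular or at best not supplied by that reference.

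The paper's own proof avoids rescaling and classification entirely. Assuming $\sigma_{p,2}=0$ (say), it chooses $r_0$ so small that $\int_{B_{r_0}(p)}\rho_2 h_2 e^{u_{k,2}}\leq\pi$, gets boundary control $|u_k|\leq C$ on $\partial B_{r_0}(p)$ because $p$ is isolated in $\mathfrak{S}$, and then decomposes $-u_k$ in $B_{r_0}(p)$ into a piece driven by $h_1 e^{u_k}$ (bounded above via the maximum principle since its Laplacian has a sign), a harmless constant-Laplacian piece, and a piece $\tilde u_{1k}$ solving $\Delta\tilde u_{1k}+V_k e^{\tilde u_{1k}}=0$ with zero boundary data and $\int V_k e^{\tilde u_{1k}}\leq\pi<4\pi$; the Brezis--Merle small-mass estimate then forces $|\tilde u_{1k}|\leq C$, so $-u_k$ is bounded above in $B_{r_0}(p)$, contradicting $p\in\mathfrak{S}_2$. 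This route buys you exactly the uniform lower bound your rescaling needs without ever having to disentangle the two bubbles: the small mass hypothesis on the side you are testing is fed directly into a compactness lemma, instead of being derived as a by-product of a classification. If you want to rescue your blow-up argument, you should replace the vague appeal to \cite{jwy} with a Brezis--Merle-type argument on the shifted function, which would essentially reproduce the paper's proof anyway.
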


\begin{proof}
We prove the lemma by contradiction. Without loss of generality we assume $p\in\mathfrak{S}_2$ and $\sigma_{p,2}=0$. We start by proving that
\begin{equation}
\label{2.3}
|u_k(x)|\leq C, \qquad \mbox{in } K\subset\subset M\setminus\mathfrak{S},
\end{equation}
for some $C>0$ depending on the set $K$. Indeed, let $M_1=\cup_{p\in\mathfrak{S}}B_{r_0}(p)$ with $r_0$ such that $K\subset\subset M\setminus M_1.$ Using the Green's representation we have
$$
u_{k}(x)=\int_MG(x,z)\Big(\rho_1(h_1e^{u_{k,1}}-1)-\rho_2(h_2e^{u_{k,2}}-1)\Big)
$$
$$
=\int_{M_1}G(x,z)\Big(\rho_1(h_1e^{u_{k,1}}-1)-\rho_2(h_2e^{u_{k,2}}-1)\Big)+\int_{M\setminus M_1}G(x,z)\Big(\rho_1(h_1e^{u_{k,1}}-1)-\rho_2(h_2e^{u_{k,2}}-1)\Big),
$$
Since $G(x,z)$ is bounded for $z\in M_1$ and $x\in K$, it is not difficult to prove that
\begin{align*}
\int_{M_1}G(x,z)\Big(\rho_1(h_1e^{u_{k,1}}-1)-\rho_2(h_2e^{u_{k,2}}-1)\Big)=O(1).
\end{align*}
On the other hand, in $M\setminus M_1$  $u_{k,i}$ are bounded above by some constant which depends on $r_0$, thus it is possible to deduce that
\begin{align*}
\int_{M\setminus {M_1}}G(x,z)\Big(\rho_1(h_1e^{u_{k,1}}-1)-\rho_2(h_2e^{u_{k,2}}-1)\Big)=O(1).
\end{align*}
This proves the claim. We point out that by the same argument one gets that $u_{k}$ has bounded oscillation in any compact subset of $M\setminus\mathfrak{S}.$ This fact will be then used in the proof of Theorem \ref{th1.1}.

\medskip

Now, by assumption $\sigma_{2p}=0$: therefore, we can take $r_0$ such that
\begin{align}
\label{2.4}
\int_{B_{r_0}(p)}\rho_2h_2e^{u_{k,2}}\leq \pi
\end{align}
for all $k$. and $r_0\leq\frac12d(p,\mathfrak{S}\setminus\{p\})$. On $\partial B_{r_0}(p)$, by (\ref{2.3})
\begin{equation}
\label{2.5}
|u_k|\leq C \qquad \mathrm{on}~\partial B_{r_0}(p).
\end{equation}
Let $\hat{u}_{1k}$ satisfy the following equation
\begin{align}
\label{2.6}
\left\{\begin{array}{ll}
\Delta \hat{u}_{1k}=\rho_1\Big(\frac{h_1e^{u_k}}{\int_Mh_1e^{u_k}}-1\Big)~&\mathrm{in}~B_{r_0}(p),\\
\hat{u}_{1k}=u_{k}~&\mathrm{on}~\partial B_{r_0}(p).
\end{array}\right.
\end{align}
We set $\hat{u}_{1k}=\hat{u}_{2k}+\hat{u}_{3k}$, where $\hat{u}_{2k}$ and $\hat{u}_{3k}$ satisfy
\begin{equation}
\label{2.7}
\left\{\begin{array}{llll}
\Delta \hat{u}_{2k}=\rho_1\frac{h_1e^{u_k}}{\int_Mh_1e^{u_{k}}}~&\mathrm{in}~B_{r_0}(p),\qquad &\hat{u}_{2k}=u_k~&\mathrm{on}~\partial B_{r_0}(p),\\
\Delta \hat{u}_{3k}=-\rho_1~&\mathrm{in}~B_{r_0}(p),\qquad &\hat{u}_{3k}=0~&\mathrm{on}~\partial B_{r_0}(p).
\end{array}\right.
\end{equation}
Exploiting the maximum principle we directly get $\hat{u}_{2k}\leq\max_{\partial B_{r_0}(p)}u_k\leq C$ by (\ref{2.5}) in $B_{r_0}(p).$ Moreover, clearly $|\hat{u}_{3k}|\leq C$ by elliptic estimates. We conclude that
\begin{equation}
\label{2.8}
\hat{u}_{1k}\leq C \qquad  \mbox{in } B_{r_0}(p).
\end{equation}

\medskip

We write now $-u_{k}=\tilde{u}_{1k}+\tilde{u}_{2k}+\hat{u}_{1k},$ where $\tilde{u}_{1k}$ and $\tilde{u}_{2k}$ satisfy
\begin{equation}
\label{2.9}
\left\{\begin{array}{llll}
\Delta \tilde{u}_{1k}=-\rho_2\frac{h_2e^{-u_k}}{\int_Mh_2e^{-u_k}}~&\mathrm{in}~B_{r_0}(p),\qquad &\tilde{u}_{1k}=0~&\mathrm{on}~\partial B_{r_0}(p),\\
\Delta \tilde{u}_{2k}=\rho_2~&\mathrm{in}~B_{r_0}(p),\qquad &\tilde{u}_{2k}=0~&\mathrm{on}~\partial B_{r_0}(p).
\end{array}\right.
\end{equation}
As before we have $|\tilde{u}_{2k}|\leq C$ in $B_{r_0}(p).$ Letting $g_k={\tilde{u}_{2k}+\hat{u}_{1k}},$  the first equation in (\ref{2.9}) can be written as
\begin{equation}
\label{2.10}
\Delta \tilde{u}_{1k}+\rho_2\frac{h_2e^{g_k}}{\int_Mh_2e^{-u_k}}e^{\tilde{u}_{1k}}=0~\mathrm{in}~B_{r_0}(p),\qquad \tilde{u}_{1k}=0~\mathrm{on}~\partial B_{r_0}(p).
\end{equation}
By Jensen's inequality and recalling that we are working in $\mathring{H}^1$ we observe that $\int_Mh_2e^{-u_k}\geq Ce^{\int_M-u_k}\geq C>0.$ Finally, setting $V_k=\rho_2\frac{h_2e^{g_k}}{\int_Mh_2e^{-u_k}},$
we have $V_k\leq C$ in $B_{r_0}(p)$, for some $C$ depending on $r_0$. Moreover, using (\ref{2.4}) we get
$$
\int_{B_{r_0}(p)}V_ke^{\tilde{u}_{1k}}\leq\pi.
$$
It follows that by \cite[Corollary 3]{bm} we have $|\tilde{u}_{1k}|\leq C$ and hence
$$-u_{k}=\tilde{u}_{1k}+\tilde{u}_{2k}+\hat{u}_{1k}\leq C \qquad \mbox{in } B_{r_0}(p).$$
We conclude that $u_{k,2}=-u_k-\int_Mhe^{-u_k}\leq C$ in $B_{r_0}(p)$, which contradicts the fact that $u_{k,2}$ blows up at $p.$ The proof is completed.
\end{proof}

\medskip

\noindent The crucial property of the blowing up sequences to \eqref{liouv}, which will be used in the proof of Theorem~\ref{th1.1}, is stated in the next lemma.
\begin{lemma}
\label{le2.2}
Suppose $u_{k,1}, u_{k,2}$ both blow up at $p\in M$ and let $$(\sigma_{p,1},\sigma_{p,2})=\big(2m(m+1),2m(m-1)\big).$$ Then
$$
u_{k,1}\rightarrow-\infty \qquad \mbox{in } B_{r_0}(p)\setminus\{p\},
$$
where $r_0$ is small enough such that $B_{r_0}(p)\cap(\mathfrak{S}\setminus\{p\})=\emptyset.$
\end{lemma}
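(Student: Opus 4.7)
My plan is to prove the lemma in three steps, combining a Green's function representation of $u_k$ away from $p$ with the concentration properties forced by the quantized masses $(\sigma_{p,1},\sigma_{p,2})=(2m(m+1),2m(m-1))$. First I would use that $p$ is the only blow-up point inside $B_{r_0}(p)$ (by the hypothesis $B_{r_0}(p)\cap(\mathfrak{S}\setminus\{p\})=\emptyset$) together with the Green's representation
\[
u_k(x)=\int_M G(x,y)\bigl[\rho_1 h_1 e^{u_{k,1}}(y)-\rho_2 h_2 e^{u_{k,2}}(y)\bigr]\,dy,
\]
which uses $\int_M u_k=0$ and $\int_M G(\cdot,y)\,dy=0$. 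From the weak convergence $\rho_i h_i e^{u_{k,i}}\rightharpoonup 2\pi\sigma_{p,i}\delta_p+f_i\,dV$ (with the regular parts $f_i$ uniformly bounded on compact subsets of $B_{r_0}(p)\setminus\{p\}$, as they come from the remainder of $M$), I would conclude that on compact subsets of $B_{r_0}(p)\setminus\{p\}$,
\[
u_k(x)=2\pi(\sigma_{p,1}-\sigma_{p,2})G(x,p)+O(1)=8\pi m\,G(x,p)+O(1),
\]
so in particular $u_k$ remains uniformly bounded on such compacta.

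Next I would handle the normalization $\log\int_M h_1 e^{u_k}$. Integrating the identity $u_{k,1}=u_k-\log\int_M h_1 e^{u_k}$ over $M$ and using $\int_M u_k=0$ together with $|M|=1$, I get
\[
\log\int_M h_1 e^{u_k}=-\int_M u_{k,1}\,dx,
\]
so the goal reduces to showing $\int_M u_{k,1}\to-\infty$. Here the quantization $\sigma_{p,1}=2m(m+1)$ becomes essential: the fine blow-up analysis for the sinh-Gordon system at the quantized masses $(2m(m+1),2m(m-1))$, as developed in \cite{jwy,jwyz}, shows that the measure $\rho_1 h_1 e^{u_{k,1}}$ concentrates as a pure Dirac mass at $p$, with no residual bulk density on $M\setminus\{p\}$. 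Combined with the normalization $\int_M h_1 e^{u_{k,1}}=1$, which forces $e^{u_{k,1}}$ to be supported more and more sharply at $p$, a direct computation on the associated Toda-type bubble tower -- analogous to the single-bubble case $u_{k,1}\approx-\log(\pi r_k^2)-2\log(1+|x-p|^2/r_k^2)$, for which integration on a fixed ball gives $\int u_{k,1}\sim \log r_k\to-\infty$ as the bubble scale $r_k\to 0$ -- yields $\int_M u_{k,1}\to-\infty$, hence $\log\int_M h_1 e^{u_k}\to+\infty$.

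Finally, combining the previous two steps produces, uniformly on compact subsets of $B_{r_0}(p)\setminus\{p\}$,
\[
u_{k,1}(x)=u_k(x)-\log\int_M h_1 e^{u_k}=8\pi m\,G(x,p)+O(1)-\log\int_M h_1 e^{u_k}\longrightarrow-\infty,
\]
which is the desired conclusion. The main obstacle is clearly the second step: a priori $u_{k,1}$ could retain a nontrivial $L^\infty$-bounded residual density away from $p$, in which case $\int_M u_{k,1}$ would stay bounded and the whole argument would collapse. Ruling out such a residual bulk is precisely what the specific quantization $(2m(m+1),2m(m-1))$ guarantees, through the classification of entire Toda-type limit profiles for the sinh-Gordon system, and this is the technically most delicate ingredient of the proof.
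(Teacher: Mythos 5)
There is a genuine gap in your Step 2, and it is exactly the point where all the difficulty lives. You want to show $\int_M u_{k,1}\to-\infty$ (equivalently $\int_M h_1 e^{u_k}\to+\infty$), and the ingredient you invoke — "the measure $\rho_1 h_1 e^{u_{k,1}}$ concentrates as a pure Dirac mass at $p$, with no residual bulk density on $M\setminus\{p\}$" — is precisely the statement $u_{k,1}\to-\infty$ on $M\setminus\mathfrak{S}$, i.e.\ the conclusion of the lemma. The blow-up classification in \cite{jwy,jwyz} gives the \emph{local mass quantization} $(\sigma_{p,1},\sigma_{p,2})\in\{(2m(m\pm1),2m(m\mp1))\}$; it does not by itself decide whether the sequence concentrates (case $u_{k,1}\to-\infty$) or merely blows up with a nontrivial $L^\infty$-bounded residual away from $p$. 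That dichotomy (familiar from Brezis--Merle/Li--Shafrir theory) is open a priori, and your bubble-tower heuristic for $\int_M u_{k,1}$ only covers the pure-bubble scenario, which begs the same question. So the argument is circular at its key step.

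The paper resolves this with a genuinely different, non-circular argument: argue by contradiction that $u_{k,1}\geq -C$ on $\partial B_{r_0}(p)$, solve $-\Delta z_k=f_{1k}$ in $B_{r_0}(p)$ with boundary value $-C$ (where $f_{1k}$ is the full right-hand side), and observe by the maximum principle that $u_{k,1}\geq z_k$. Since $\int_{B_{r_0}(p)} f_{1k}=2\pi(\sigma_{p,1}-\sigma_{p,2})+o(1)=8\pi m+o(1)$, the limit $z$ satisfies $-\Delta z=f_1+8\pi m\,\delta_p$, so $z(x)\geq 4m\log\frac{1}{|x-p|}+O(1)$ near $p$. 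With $m\geq 2$ (forced by Lemma \ref{le2.1}), the exponent $4m\geq 8>2$, hence $\int_{B_{r_0}(p)}e^{z}=\infty$; but $\int_{B_{r_0}(p)}e^{z_k}\leq\int_{B_{r_0}(p)}e^{u_{k,1}}\leq C$ by the normalization $\int_M h_1 e^{u_{k,1}}=1$, a contradiction. Notice how the specific quantized value enters: not through any "no residual" classification, but through the size of the concentrated mass being large enough to make the comparison function's exponential non-integrable. Your Step 1 (the Green's representation showing $u_k=8\pi m\,G(x,p)+O(1)$ away from $p$) is sound, but it is not used this way in the paper and by itself it does not give you Step 2.
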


\begin{proof}
Suppose by contradiction the claim is not true; it follows that ${u}_{k,1}>-C$ on $\partial B_{r_0}(p)$, for some $C$. Without loss of generality, we assume $m=2.$ The proof of the cases $m\geq3$ are similar. Let
$f_{1k}=\rho_1(h_1e^{u_{k,1}}-1)-\rho_2(h_2e^{u_{k,2}}-1)$ and $z_k$ be the solution of
\begin{align}
\label{2.16}
\left\{\begin{array}{ll}
-\Delta z_k=f_{1k}~&\mathrm{in}~B_{r_0}(p),\\
z_k=-C~&\mathrm{on}~\partial B_{r_0}(p).
\end{array}\right.
\end{align}
Clearly by Lemma \ref{le2.1} we have $f_{1k}\rightarrow f_1$ uniformly in any compact set of $B_{r_0}(p)\setminus\{p\}$. Moreover, by assumption we get
\begin{equation}\label{q1}
\int_{B_{r_0}(p)} f_{1k} = 16\pi+o(1) \qquad \mbox{as } r_0\rightarrow0.
\end{equation}
By the maximum principle we observe that $u_{k,1}\geq z_k$ in $B_{r_0}(p).$ It follows that
\begin{equation}\label{bou}
\int_{B_{r_0}(p)}e^{z_k}\leq \int_{B_{r_0}(p)}e^{u_{k,1}}<\infty.
\end{equation}
On the other hand, since the regular part of the Green function is bounded, by the Green's representation formula we have
\begin{align}
\label{2.17}
z_k(x)=-\int_{B_{r_0}(p)}\frac{1}{2\pi}\ln|x-y|\big(\rho_1(h_1e^{u_{k,1}}-1)
-\rho_2(h_2e^{u_{k,2}}-1)\big)+O(1).
\end{align}
For any $x\in B_{r_0}(p)\setminus\{p\}$ we let $r = \frac 12\mbox{dist}(x,p)$ and we split the above integral in the following way:
\begin{align*}
z_k(x)=&-\int_{B_{r_0}(p)}\frac{1}{2\pi}\ln|x-y|\big(\rho_1(h_1e^{u_{k,1}}-1)
-\rho_2(h_2e^{u_{k,2}}-1)\big)+O(1)\nonumber\\
=&-\int_{B_{r_0}(p)\cap B_{r}(x)}\frac{1}{2\pi}\ln|x-y|\big(\rho_1(h_1e^{u_{k,1}}-1)-\rho_2(h_2e^{u_{k,2}}-1)\big)\nonumber\\
&-\int_{B_{r_0}(p)\setminus B_r(x)}\frac{1}{2\pi}\ln|x-y|\big(\rho_1(h_1e^{u_{k,1}}-1)-\rho_2(h_2e^{u_{k,2}}-1)\big)+O(1).
\end{align*}
By the assumption ${u}_{k,i}$ are uniformly bounded above in $B_{r}(x),~i=1,2$; it follows that
$$\Big|\int_{B_{r_0}(p)\cap B_{r}(x)}\ln|x-y|\big(\rho_1(h_1e^{u_{k,1}}-1)-\rho_2(h_2e^{u_{k,2}}-1)\big)\Big|\leq C,$$
for some $C>0$ depending only on $x$.
For $y\in B_{r_0}(p)\setminus B_r(x)$, recalling \eqref{q1}, it is not difficult to see that
\begin{align*}
\int_{B_{r_0}(p)\setminus B_r(x)}\ln|x-y|\big(\rho_1(h_1e^{u_{k,1}}-1)-\rho_2(h_2e^{u_{k,2}}-1)\big)
=(16\pi+o(1))\ln|x-p|+O(1).
\end{align*}
Therefore, we get that $z_k(x)$ is uniformly bounded by some constant that depends on $x$ only. Thus, we have
$z_k\rightarrow z$ in $C^2_{loc}(B_{r_0}(p)\setminus\{p\})$, where $z$ satisfies
\begin{align*}
\left\{\begin{array}{ll}
-\Delta z=f_1~&\mathrm{in}~B_{r_0}(p)\setminus\{p\},\\
z=-C~&\mathrm{on}~\partial B_{r_0}(p).
\end{array}\right.
\end{align*}
Then, for any $\varphi\in C_0^{\infty}(B_{r_0}(p))$, by standard arguments and using \eqref{q1} one gets
\begin{align*}
\lim_{k\rightarrow+\infty}\int_{B_{r_0}(p)}\varphi(-\Delta z_k)=&\lim_{k\rightarrow+\infty} \int_{B_{r_0}(p)}(\varphi(x)-\varphi(p))(-\Delta z_k)+\varphi(p)\int_{B_{r_0}(p)}f_1+16\pi\\
=&\int_{B_{r_0}(p)}\varphi(x)f_1+16\pi\varphi(p).
\end{align*}
Thus, $-\Delta z=f_1+16\pi\delta_p$ in $B_{r_0}(p).$ Therefore, we have $z(x)\geq 8\log\frac{1}{|x-p|}+O(1)$ as $x\rightarrow p$ and we deduce
$$
\int_{B_{r_0}(p)}e^{z}=\infty,
$$
a contradiction to \eqref{bou}. The proof is concluded.
\end{proof}

\medskip

\noindent By using these lemmas we are now in position to prove the bound of the solution to equation \eqref{liouv} in Theorem~\ref{th1.1}.

\

\noindent{\em Proof of Theorem \ref{th1.1}.}
We can write (\ref{liouv}) as
$$
  - \D u_k = \rho_1 \bigr( h_1 \,e^{u_{k,1}}- 1 \bigr) - \rho_2 \bigr(h_2 \,e^{-u_{k,2}} - 1\bigr) \quad \mbox{on } M.
$$
Thus, by elliptic estimates it is  enough for us to prove that $u_{k,i}$ are uniformly bounded above. Suppose this is not true.

At first, we claim that $\mathfrak{S}_1\neq\emptyset$. If not, $u_{k,1}$ is uniformly bounded above while $u_{k,2}$ blows up. Letting $\tilde{h}_{2k}=h_2e^{-u_{1k}}$ we write $u_{k}=u_{1k}-u_{2k}$, where $u_{1k}$ and $u_{2k}$ satisfies
\begin{align*}
\begin{cases}
\Delta u_{1k}+\rho_1(h_1e^{u_{k,1}}-1)=0,\quad &\int_Mu_{1k}=0,\\
\Delta u_{2k}+\rho_2\Big(\frac{\tilde{h}_{2k}e^{u_{2k}}}{\int_M \tilde{h}_{2k}e^{u_{2k}}}-1\Big)=0,~&\int_Mu_{2k}=0.
\end{cases}
\end{align*}
By the $L^p$ estimate, $u_{1k}$ is bounded in $W^{2,p}$ for any $p>1$ and hence we deduce that $u_{1k}$ is bounded in $C^{1,\alpha}$ for any $\alpha\in(0,1)$. After eventually passing to a subsequence, $u_{1k}$ converges to $\hat{u}_{1}$ in $C^{1,\alpha}.$ We conclude that $\tilde{h}_{2k}\rightarrow h_2e^{-\hat{u}_{1}}$ in $C^{1,\alpha}$. By the fact that $u_{k,2}$ blows up, also $u_{2k}$ blows up. Therefore, we are in position to applying the result in \cite{ls} and get
$\rho_2\in 8\pi\mathbb{N}$, which contradicts our assumption. Thus $\mathfrak{S}_1\neq\emptyset$. Similarly,  $\mathfrak{S}_2\neq\emptyset$.

\medskip

By similar arguments one can show that $\mathfrak{S}_1\cap\mathfrak{S}_2\neq\emptyset.$ In fact, suppose
$\mathfrak{S}_1\cap\mathfrak{S}_2=\emptyset.$ Let $p\in \mathfrak{S}_2,$ and take $r_0$  small enough such that $B_{r_0}(p)\cap(\mathfrak{S}\setminus\{p\})=\emptyset.$
Letting  $\hat{h}_{2k}=h_2e^{u_{3k}}$ we decompose
$u_{k,2}=u_{3k}+u_{4k}$ such that $u_{3k}$ and $u_{4k}$ satisfy
\begin{align}
\label{2.11}
\left\{\begin{array}{llll}
\Delta u_{3k}-\rho_1(he^{u_{k,1}}-1)=0~&\mathrm{in}~B_{r_0}(p),\quad
&u_{3k}=0~&\mathrm{on}~\partial B_{r_0}(p),\\
\Delta u_{4k}+\rho_2(\hat{h}_{2k}e^{u_{4k}}-1)=0~&\mathrm{in}~B_{r_0}(p),\quad
&u_{4k}=u_{k,2}~&\mathrm{on}~\partial B_{r_0}(p).
\end{array}\right.
\end{align}
By construction $u_{k,1}$ is uniformly bounded from above in $B_{r_0}(p)$, hence $\hat{h}_{2k}$ converges to some function
in $C^{1,\alpha}(B_{r_0}(p)).$ On the other hand, $u_{4k}$ blows up at $p$ and by the result in \cite{l} we get
\begin{align}
\label{2.12}
\left|u_{4k}-\log\left(\frac{e^{u_{4k}(p^{(k)})}}
{\left(1+\frac{\rho_2\hat{h}_{2k}(p^{(k)})e^{u_{4k}(p^{(k)})}}{8}|x-p^{(k)}|^{2}\right)^2}\right)\right|\leq C \qquad \mbox{in } B_{r_0}(p),
\end{align}
where $u_{4k}\bigr(p^{(k)}\bigr)=\max_{B_{r_0}(p)}u_{4k}$. From (\ref{2.12}) and by the definition of $\hat{h}$ we have
\begin{align}
\label{2.13}
\begin{split}
&u_{4k}\rightarrow-\infty \qquad \quad \mathrm{in}~ B_{r_0(p)}\setminus\{p\}, \\
&\rho_2h_2e^{u_{k,2}}\rightarrow 8\pi\delta_p \qquad  \mathrm{in}~B_{r_0}(p).
\end{split}
\end{align}
As a consequence, we have
\begin{equation}
\label{2.14}
\rho_2=\lim_{k\rightarrow\infty}\int_M\rho_2h_2e^{u_{k,2}}=8\pi|\mathfrak{S}_2|,
\end{equation}
a contradiction to our assumption $\rho_2\notin 8\pi\mathbb{N}$, so $\mathfrak{S}_1\cap\mathfrak{S}_2\neq\emptyset.$
\medskip

Let $p\in \mathfrak{S}_1\cap\mathfrak{S}_2,$ and $\sigma_{p,i},i=1,2$ be the local masses. Applying the result in \cite{jwy} we deduce
$$(\sigma_{p,1},\sigma_{p,2})=\big(2m(m-1),2m(m+1)\big) \quad \mathrm{or} \quad \big(2m(m+1),2m(m-1)\big)$$
for some integer $m>0.$ By Lemma \ref{le2.1} we have $m\geq2.$ Suppose for example that $\sigma_{p,1}=2m(m+1)$ and $\sigma_{p,2}=2m(m-1)$. Applying Lemma \ref{le2.2} we have that $u_{k,1}$ concentrates, i.e. $u_{k,1}\rightarrow-\infty$ uniformly in any compact set of $B_{r_0}(p)\setminus\{p\}.$  Since $u_{k,1}$ has bounded oscillation outside the blow up set, see the argument after \eqref{2.3}, it follows $u_{k,1}\rightarrow-\infty$ uniformly in any compact set of $M\setminus\mathfrak{S}_1.$ Reasoning as before we get
\begin{equation}
\label{2.15}
\rho_1h_1e^{u_{k,1}}\rightarrow\alpha_q\sum_{q\in\mathfrak{S}_1\setminus\{p\}}\delta_q
+4\pi m(m+1)\delta_p \qquad \mathrm{with}~\alpha_q=8n\pi~\mathrm{for~some~integer}~n,
\end{equation}
which implies $\rho_1\in8\pi\mathbb{N}$ and thus we get a contradiction. Following exactly the same process we can also get a contradiction if $(\sigma_{p,1},\sigma_{p,2})=\big(2m(m-1),2m(m+1)\big)$. This concludes the proof of the Theorem~\ref{th1.1}.
\begin{flushright}
$\square$
\end{flushright}

\

The second part of this section is concerned with the proof of Theorem \ref{th1.2}, that is the equivalence of the topological degree between the equation \eqref{liouv} and the system
\begin{align}
\label{2.19}
\begin{cases}
\Delta v_1+\rho_1\Big(\frac{h_1e^{v_1-v_2}}{\int_Mh_1e^{v_1-v_2}}-1\Big)=0,\quad&\int_Mv_1=0,\\
\Delta v_2+\rho_2\Big(\frac{h_2e^{v_2-v_1}}{\int_Mh_2e^{v_2-v_1}}-1\Big)=0,\quad&\int_Mv_2=0,
\end{cases}
\end{align}
where $u=v_1-v_2.$

\

\noindent{\em Proof of Theorem \ref{th1.2}.}
By Theorem \ref{th1.1} we have  that the degree $d_{SG}$ is well defined for $\rho_1,\rho_2\notin 8\pi\mathbb{N}.$ As we discussed in the introduction, we  decompose $u=v_1-v_2$, where $v_1,v_2$ satisfy \eqref{2.19}.

It is easy to see that such decomposition is unique, i.e. for a given solution $u$ to (\ref{liouv}) we can find a unique
solution $(v_1,v_2)$ to (\ref{2.19}) such that $u=v_1-v_2.$ On the other hand, if $(v_1,v_2)$ is a solution to (\ref{2.19}), $u=v_1-v_2$ is automatically a solution to (\ref{liouv}).

\medskip

Consider now a solution $(v_1,v_2)$ to (\ref{2.19}). We start by observing that since $v_1-v_2$ is a solution to \eqref{liouv}, it is bounded for $\rho_i\notin 8\pi\N, i=1,2$, see Theorem \ref{th1.1}. Moreover, notice that $\int_M e^{v_1-v_2}\geq C>0$ by Jensen's inequality. Therefore, by elliptic estimates we get also $v_1,v_2$ are bounded provided $\rho_i\notin 8\pi\N, i=1,2$. It follows that the topological degree $d_s$ of the system \eqref{2.19} is well defined in this range of parameters.

\medskip

Next, we prove that the Morse index of a solution $u$ to (\ref{liouv}) is exactly the same as the Morse index of the corresponding solution $(v_1,v_2)$ to (\ref{2.19}). We consider the linearized equation of (\ref{liouv}) at $u,$
\begin{align*}
L(\phi)=&\Delta\phi+\rho_1\frac{h_1e^u}{\int_Mh_1e^u}\phi-\rho_1\frac{h_1e^u}{(\int_Mh_1e^u)^2}\int_Mh_1e^u\phi\\
&+\rho_2\frac{h_2e^{-u}}{\int_Mh_2e^{-u}}\phi-\rho_2\frac{h_2e^{-u}}{(\int_Mh_2e^{-u})^2}\int_Mh_2e^{-u}\phi.
\end{align*}
If $\phi$ is an eigenfunction of the linearized operator of $L$ with negative eigenvalue $\lambda$, i.e.
\begin{align}
\label{2.20}
L(\phi)+\lambda\phi=0 \qquad \mathrm{with}~\lambda<0,
\end{align}
we decompose $\phi$ as $\phi=\phi_1-\phi_2,$ where $\phi_1$ and $\phi_2$ satisfy
\begin{align}
\label{2.21}
&\Delta\phi_1+\rho_1\frac{h_1e^u}{\int_Mh_1e^u}(\phi_1-\phi_2)
-\rho_1\frac{h_1e^u}{(\int_Mh_1e^u)^2}\int_Mh_1e^u(\phi_1-\phi_2)+\lambda\phi_1=0,\nonumber\\
&\Delta\phi_2+\rho_2\frac{h_2e^{-u}}{\int_Mh_2e^{-u}}(\phi_2-\phi_1)
-\rho_2\frac{h_2e^{-u}}{(\int_Mh_2e^{-u})^2}\int_Mh_2e^{-u}(\phi_2-\phi_1)+\lambda\phi_2=0.
\end{align}
In the following, we claim there is a map between $\phi$ and such $(\phi_1,\phi_2)$ and this map is one to one. Indeed, for any function $\phi$ and parameter $\lambda$ negative, we consider the following system:
\begin{align}
\label{2.22}
&(\Delta+\lambda)\phi_1+\rho_1\frac{h_1e^u}{\int_Mh_1e^u}\phi
-\rho_1\frac{h_1e^u}{(\int_Mh_1e^u)^2}\int_Mh_1e^u\phi=0,\nonumber\\
&(\Delta+\lambda)\phi_2-\rho_2\frac{h_2e^{-u}}{\int_Mh_2e^{-u}}\phi
+\rho_2\frac{h_2e^{-u}}{(\int_Mh_2e^{-u})^2}\int_Mh_2e^{-u}\phi=0.
\end{align}
For fixed $\phi$ and $\lambda$ negative, we can always solve (\ref{2.22}) and get a unique solution $(\phi_1,\phi_2).$ While for any solution $(\phi_1,\phi_2)$ of (\ref{2.22}) and $\lambda$ negative, $\phi=\phi_1-\phi_2$ is automatically a solution of (\ref{2.20}). Hence, we prove the claim.

On the other hand, we can see that (\ref{2.21}) is nothing but the linearized equation of (\ref{2.19}) at $(v_1,v_2)$. Therefore, the Morse index of the solution $u$ to (\ref{liouv})  is same as the Morse index of the solution $(v_1,v_2)$ to (\ref{2.19}). According to the definition of the topological degree and since the decomposition for $u$ is unique, we can conclude that the topological degree of these two equations are the same.
\begin{flushright}
$\square$
\end{flushright}

\vspace{1cm}
\section{shadow system} \label{sec:shadow}

\medskip

The first goal of this section is to provide a proof of Theorem \ref{th1.3} by studying the blow up phenomena for $\rho_{1k}\rightarrow 8\pi,\rho_{2k}\rightarrow\rho_2$ with $\rho_2\notin 8\pi\mathbb{N}$. In the second part of the section we will prove that it is possible to choose $h_1,~h_2$ such that the associated shadow system is non-degenerate.

\

\noindent{\em Proof of Theorem \ref{th1.3}.} Let $\rho_2\notin 8\pi\N$ and $(\rho_{1k},\rho_{2k})\to(8\pi,\rho_2)$. Consider a sequence of solutions $(v_{1k},v_{2k})$ to (\ref{s}) such that $\max_M(v_{1k},v_{2k})\to+\infty$.

We claim that $v_{2k}$ converges to some function $w$ in $C^{1,\alpha}(M)$ (passing to a subsequence if necessary) and that $v_{1k}$ blows up at only one point.

Indeed, we have that $u_k=v_{1k}-v_{2k}$ is a solution of (\ref{liouv}). Then, from the proof of Theorem \ref{th1.1}, $-u_k=v_{2k}-v_{1k}$ is uniformly bounded above: by using Jensen's inequality and the classical elliptic estimates, from the second equation in (\ref{s}) we conclude $v_{2k}$ is uniformly bounded in $C^{1,\alpha}$ and hence the first part of the claim is proved. As a consequence we  deduce that $\max_Mv_{1k}\rightarrow+\infty.$ Furthermore, noticing that $\rho_1\rightarrow8\pi,$ $v_{1k}$ blows up at only one point, say $p\in M.$
\medskip

We write the first equation in (\ref{s}) as
\begin{equation}
\label{3.1}
\Delta v_{1k}+\rho_{1k}\left(\frac{\tilde{h}_{1k}e^{v_{1k}}}{\int_{M}\tilde{h}_{1k}e^{v_{1k}}}-1\right)=0,
\end{equation}
where $\tilde{h}_{1k}=h_1e^{-v_{2k}}$. We define $\hat{v}_{1k}=v_{1k}-\log\int_{M}h_ke^{v_{1k}}.$ Due to the $C^{1,\alpha}$ convergence of $\tilde{h}_{1k},$ we can apply the following result of Li \cite{l}:
\begin{align}
\label{3.2}
\left|\hat{v}_{1k}-\log\frac{e^{\lambda_{k}}}{\left(1+\frac{\rho_{1k}\tilde h_{1k}(p^{(k)})
e^{\lambda_{k}}}{8}|x-p^{(k)}|^2\right)^2}\right|<c \qquad
\mathrm{for}~|x-p^{(k)}|<r_0,
\end{align}
where $\lambda_{k}=\hat{v}_{1k}(p^{(k)})=\max_{x\in B_{r_0}(p)}\hat{v}_{1k}.$ It follows that
\begin{equation}
\label{3.3}
\hat{v}_{1k}\rightarrow-\infty~\mathrm{in}~M\setminus\{p\}\qquad \mbox{and} \qquad
\rho_{1k}\frac{h_1e^{v_{1k}-v_{2k}}}{\int_Mh_1e^{v_{1k}-v_{2k}}}\rightarrow 8\pi\delta_p,
\end{equation}
This conclude the first part of Theorem \ref{th1.1}. Moreover, in this setting the authors in \cite{chenlin} proved that
\begin{align}
\label{3.4}
\nabla\big(\log(h_1e^{-w})+4\pi R(x,x)\big)\mid_{x=p}=0,
\end{align}
which gives the second equation in (\ref{sy}).
\medskip

Reasoning as in Lemma 2.4 in \cite{lwy}, one can use the following property in \cite{chenlin}:
$$
\left|\nabla \hat{v}_{1k}-\nabla \left(\log\frac{e^{\lambda_{k}}}{\left(1+\frac{\rho_{1k}\tilde h_{1k}(p^{(k)})
e^{\lambda_{k}}}{8}|x-p^{(k)}|^2\right)^2}\right)\right|<c \qquad
\mathrm{for}~|x-p^{(k)}|<r_0
$$
and get $v_{2k}\rightarrow w$ in $C^{2,\alpha}(M)$. From this convergence we are in position to apply a result in \cite{l}, which asserts that
$$v_{1k}\rightarrow 8\pi G(x,p) \qquad \mathrm{in}~C^{2,\alpha}(M\setminus\{p\}).$$
The above convergence jointly with $v_{2k}\rightarrow w$ in $C^{2,\alpha}(M)$ yields that $w$ satisfies the following equation:
\begin{equation}
\label{3.5}
\Delta w+\rho_2\left(\frac{h\,e^{w-8\pi G(x,p)}}{\int_M h\,e^{w-8\pi G(x,p)}}-1\right)=0.
\end{equation}
This proves the first equation in (\ref{sy}). Therefore, we finish the proof of Theorem \ref{th1.2}.
\begin{flushright}
$\square$
\end{flushright}

\

\noindent The second part of this section is devoted in showing the non-degeneracy of the shadow system (\ref{sy}), see Proposition \ref{th2.1.2}. This will be carried out by applying the well-known transversality theorem, which can be found \cite{a,q} and the references therein. Although we can suitably adapt the argument in \cite{lwy}, for the sake of completeness we give the details here.

\medskip

First, we give some notations. Let $\mathcal{H},~\mathcal{B}$ and $\mathcal{E}$ be  Banach manifolds with
$\mathcal{H}$ and $\mathcal{E}$ separable. Let $F:\mathcal{H}\times \mathcal{B}\rightarrow \mathcal{E}$ be a $C^k$ map. We say $y\in \mathcal{E}$ is a regular value if every point $x\in F^{-1}(y)$ is a regular point;
$x\in \mathcal{H}\times \mathcal{B}$ is a regular point of $F$ if $D_xF:T_x(\mathcal{H}\times \mathcal{B})\rightarrow T_{F(x)}\mathcal{E}$ is onto. We say a set $A$ is a residual set in $\mathcal{B}$ if $A$ is a countable intersection of open dense sets in $\mathcal{B}$, which implies in particular that $A$ is dense in $\mathcal{B}$ ($\mathcal{B}$ is a Banach space), see \cite{k}.
\begin{theorem} [\cite{a,q}]
\label{th2.1.1}
Let $\mathcal{H},~\mathcal{B},~\mathcal{E}$ be as above and let $F:\mathcal{H}\times \mathcal{B}\rightarrow \mathcal{E}$ be a $C^k$ map. If $0$ is a regular value of $F$ and $F_b=F(\cdot,b)$ is a Fredholm map of index less than $k,$ then the set
$$\Bigr\{b\in \mathcal{B}:0~is~a~regular~value~of~F_b\Bigr\}$$ is residual in $\mathcal{B}.$ In particular, the above set is dense in $\mathcal{B}.$
\end{theorem}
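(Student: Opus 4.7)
The plan is to deduce the theorem from the Sard--Smale theorem applied to the natural projection from the universal zero set of $F$ onto the parameter space $\mathcal{B}$. Since the statement itself is standard (and the authors attribute it to \cite{a,q}), the proof I have in mind is the classical one; what follows is the strategy I would write up.

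First, I would use that $0$ is a regular value of $F$ to promote $\mathcal{Z}:=F^{-1}(0)$ to a $C^{k}$ Banach submanifold of $\mathcal{H}\times\mathcal{B}$ via the implicit function theorem. Concretely, at any point $(x_0,b_0)\in\mathcal{Z}$ the differential $D_{(x,b)}F$ is surjective, so $T_{(x_0,b_0)}\mathcal{Z}=\ker D_{(x,b)}F$, a closed subspace of $T_{x_0}\mathcal{H}\oplus T_{b_0}\mathcal{B}$ with a topological complement. This gives $\mathcal{Z}$ a $C^{k}$ manifold structure.

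Next, I would consider the projection $\pi\colon\mathcal{Z}\to\mathcal{B}$, $(x,b)\mapsto b$, and establish the pointwise identity
\begin{equation*}
\ker d\pi_{(x_0,b_0)}\cong \ker D_{x}F_{b_0}(x_0),\qquad \mathrm{coker}\,d\pi_{(x_0,b_0)}\cong \mathrm{coker}\,D_{x}F_{b_0}(x_0).
\end{equation*}
The first is immediate since the kernel of $d\pi$ consists of tangent vectors $(\xi,0)$ with $D_xF_{b_0}(\xi)=0$. For the second, a short diagram chase using the surjectivity of $D_{(x,b)}F$ shows that $d\pi$ and $D_xF_{b_0}$ have the same cokernel. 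Consequently $\pi$ is a Fredholm map of the same index as $F_{b_0}$, hence of index strictly less than $k$, and $b_0$ is a regular value of $\pi$ if and only if $0$ is a regular value of $F_{b_0}$.

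Finally, I would invoke the Sard--Smale theorem: a $C^{k}$ Fredholm map between separable Banach manifolds of index $<k$ has a residual set of regular values in the target. Applied to $\pi$, this yields that $\{b\in\mathcal{B}\,:\,b\text{ is a regular value of }\pi\}$ is residual in $\mathcal{B}$, which by the equivalence above is precisely the set in the statement. The separability of $\mathcal{H}$ is used to guarantee the Lindel\"of-type covering argument inside Sard--Smale; $\mathcal{E}$'s separability ensures that $\pi$ meets the hypotheses of that theorem. The main subtlety — and the step I would spend the most care on — is verifying the cokernel identification rigorously, since this is where the Fredholm index of $\pi$ (and hence the index hypothesis in Sard--Smale) is controlled; everything else is routine once $\mathcal{Z}$ is known to be a manifold.
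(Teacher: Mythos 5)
The paper does not actually prove this theorem; it is stated as a citation to Abraham and Quinn, so there is no internal argument to compare against. Your proof is the classical one underlying those references: promote the universal zero set $\mathcal{Z}=F^{-1}(0)$ to a $C^k$ Banach manifold via the regular value hypothesis, show that $d\pi_{(x_0,b_0)}$ and $D_xF_{b_0}(x_0)$ have naturally isomorphic kernel and cokernel, deduce that $\pi:\mathcal{Z}\to\mathcal{B}$ is Fredholm of the same index $<k$, and then invoke Sard--Smale on $\pi$. That is correct and is exactly the standard route. The one place you pass over quickly, and where the Fredholm hypothesis on $F_b$ already earns its keep at the manifold-construction stage, is the claim that $\ker D_{(x,b)}F$ has a closed topological complement: in a general Banach space closed subspaces need not split, and surjectivity of $DF$ alone does not give it. Here the splitting holds because, choosing a closed complement $V$ of $\ker D_xF_b$ in $T_x\mathcal{H}$ and using that $\mathrm{ran}\,D_xF_b$ is closed of finite codimension, one identifies $\ker D_{(x,b)}F$ with $\ker D_xF_b \times (D_bF)^{-1}(\mathrm{ran}\,D_xF_b)$, a product of a finite-dimensional space and a finite-codimensional closed subspace, both of which are complemented. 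With that observation made explicit, your kernel/cokernel diagram chase and the final application of Sard--Smale (using separability of $\mathcal{H}$) give the residual set exactly as claimed.
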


\medskip

With this in hand we can now prove the following result, which will be used crucially in the sequel when we construct approximate blow up solutions to \eqref{s}.
\begin{proposition}\label{th2.1.2}
There exist $h_1,h_2$ positive smooth functions such that the solutions to the shadow system \eqref{sy} are non-degenerate.
\end{proposition}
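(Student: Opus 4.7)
The plan is to apply the transversality Theorem \ref{th2.1.1} to a map that parametrises \eqref{sy} by the pair $(h_1,h_2)$. Choose suitable Banach spaces, e.g.\ $\mathcal{H} = M \times \mathring{H}^1(M)$, $\mathcal{B} = C^{k,\alpha}_+(M)\times C^{k,\alpha}_+(M)$ (positive $C^{k,\alpha}$ functions with $k$ large) and $\mathcal{E}= L^2_0(M)\times\R^2$, with $\R^2\cong T_pM$ via a fixed local chart, and define
\[
F\bigl((p,w),(h_1,h_2)\bigr)=(F_1,F_2),
\]
where $F_1,F_2$ are the left-hand sides of the two equations in \eqref{sy}. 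Zeros of $F$ are exactly solutions of the shadow system, and non-degeneracy of $(p,w)$ means that $D_{(p,w)}F$ is an isomorphism at that zero.

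First I would verify that $F_{(h_1,h_2)}:=F(\cdot,(h_1,h_2))$ is Fredholm of index zero. Its linearisation in $\delta w$ is a compact perturbation of $\Delta:\mathring{H}^1\cap H^2 \to L^2_0$, which is Fredholm of index zero; the two extra dimensions contributed by $\delta p$ on the source are balanced by the two dimensions of $F_2$ in the target, so the index stays zero.

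The main step is to check that $0$ is a regular value of $F$, i.e.\ at every zero the full derivative $DF$ is onto $L^2_0\times\R^2$. For the $\R^2$ factor I would vary $h_1$ alone: choosing $\phi_1(x)=\eta(x)\,h_1(p)\,(V\cdot(x-p))$ for any $V\in\R^2$, with $\eta$ a bump supported near $p$, produces $\nabla(\phi_1/h_1)(p)=V$ and leaves $F_1$ unchanged. For the $L^2_0$ factor I would vary $h_2$ alone: with $U=e^{w-8\pi G(\cdot,p)}$, $I=\int_M h_2U$ and $\phi_2=\delta h_2$,
\[
D F_1[\phi_2]=\frac{\rho_2 U}{I}\left(\phi_2 - \frac{h_2}{I}\int_M \phi_2\,U\right).
\]
If $\psi\in L^2_0$ is orthogonal to this image, testing against arbitrary $\phi_2$ gives $(\psi-c)U=0$ with $c=I^{-1}\int_M\psi h_2U$; since $U>0$ on $M\setminus\{p\}$, $\psi\equiv c$, and the zero-mean constraint forces $c=0$. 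Hence the image of $DF$ is dense in $\mathcal{E}$, and combined with the Fredholmness of $D F_{(h_1,h_2)}$ its cokernel is both finite dimensional and trivial, so $DF$ is surjective.

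Applying Theorem \ref{th2.1.1} (Fredholm index $0<k$) then produces a residual, hence dense, set of $(h_1,h_2)\in\mathcal{B}$ for which $0$ is a regular value of $F_{(h_1,h_2)}$; at any such pair every solution of \eqref{sy} is non-degenerate, and a standard approximation argument further allows one to choose $(h_1,h_2)$ smooth and positive. The hard part will be the surjectivity computation in the $L^2_0$ direction above; the Fredholm bookkeeping and the $\R^2$ surjectivity via $h_1$ are routine.
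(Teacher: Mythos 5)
Your proposal is correct and follows the same overall strategy as the paper: parametrize the shadow system by $(h_1,h_2)$, invoke the transversality theorem of Abraham--Quinn, verify the Fredholm-index-zero property of the $(p,w)$-derivative, and then show $0$ is a regular value by varying $h_1$ to cover the finite-dimensional $\R^2$ factor and $h_2$ to cover the infinite-dimensional factor. The interesting difference is the surjectivity computation in the $L^2_0$ direction. The paper argues about the cokernel of the symmetrized operator: it takes $\phi$ in the kernel of the linearization $L$ (i.e.\ the self-adjoint piece $T_{01}$), assumes it is also orthogonal to the $h_2$-variations, and deduces from the two conditions that $\int_M\Delta\phi\cdot\overline H_2=0$ for all test functions, hence $\Delta\phi=0$ and $\phi\equiv0$. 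Your argument is more direct: you show that the $h_2$-variations alone already span a dense subspace of $L^2_0$, using only the strict positivity of $U=e^{w-8\pi G(\cdot,p)}$ on $M\setminus\{p\}$ and the zero-mean normalization. This is a slightly stronger statement and does not need the kernel condition, which makes the regular-value verification cleaner, at the expense of having to keep the Fredholm closedness bookkeeping explicit (the density plus the finite-codimensional closed range of $D_{(p,w)}F$ implies actual surjectivity). One small technical slip: you declare $\mathcal{H}=M\times\mathring{H}^1(M)$ but then differentiate $\Delta$ as a map from $\mathring{H}^1\cap H^2$ to $L^2_0$; the domain should be $\mathring{H}^2$ (or, as the paper prefers, $\mathring{W}^{2,\mathfrak{p}}$ with $\mathfrak{p}>2$, which is cleaner for the $C^1$ regularity needed to evaluate $\nabla\phi(p)$ in $F_2$). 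Apart from that, the argument is sound and arguably tidier than the one in the paper.
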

\begin{proof}
Following the notations in Theorem \ref{th2.1.1}, we denote
$$\mathcal{H}=\mathring{W}^{2,\mathfrak{p}}(M)\times M, \quad \mathcal{B}=C^{2,\alpha}(M)\times C^{2,\alpha}(M),
\quad \mathcal{E}=\mathbb{R}^2\times\mathring{W}^{0,\mathfrak{p}}(M),$$
where
$$\mathring{W}^{2,\mathfrak{p}}(M):=\left\{f\in W^{2,\mathfrak{p}}\mid \int_Mf=0\right\}, \quad
\mathring{W}^{0,\mathfrak{p}}(M):=\left\{f\in L^{\mathfrak{p}}\mid \int_Mf=0\right\}.$$

\noindent We consider the map
\begin{equation}
\label{3.6}
T(w,p,h_1,h_2)=\left[\begin{array}{l}
\Delta w+\rho_2\left(\frac{h_2e^{w-8\pi G(x,p)}}
{\int_Mh_2e^{w-8\pi G(x,p)}}-1\right)\vspace{0.2cm}\\
\nabla\log\big(h_1e^{-w}+4\pi R(x,x)\big)(p)
\end{array}\right].
\end{equation}
Clearly, $T$ is $C^1.$ In order to apply Theorem \ref{th2.1.1}: we have to prove that
\begin{enumerate}
\item [(i)] $T(\cdot,\cdot,h_1,h_2)$ is a Fredholm map of index $0$,
\item [(ii)] $0$ is a regular value of $T.$
\end{enumerate}

\medskip

\noindent We start by proving (i). We have
\begin{equation}
\label{3.7}
T'_{w,p}(w,p,h_1,h_2)[\phi,\nu]=\left[\begin{array}{l}
T_0(w,p,h_1,h_2)[\phi,\nu]\\
T_1(w,p,h_1,h_2)[\phi,\nu]
\end{array}\right],
\end{equation}
where
\begin{align*}
T_0(w,p,h_1,h_2)[\phi,\nu]=~&\Delta\phi+\rho_2\frac{h_2e^{w-8\pi G(x,p)}}
{\int_Mh_2e^{w-8\pi G(x,p)}}\phi\\
&-\rho_2\frac{h_2e^{w-8\pi G(x,p)}}{(\int_Mh_2e^{w-8\pi G(x,p)})^2}\int_Mh_2e^{w-8\pi G(x,p)}\phi\\
&-8\pi\rho_2\frac{h_2e^{w-8\pi G(x,p)}}{\int_Mh_2e^{w-8\pi G(x,p)}}\nabla G(x,p)\cdot\nu\\
&+8\pi\rho_2\frac{h_2e^{w-8\pi G(x,p)}}{(\int_Mh_2e^{w-8\pi G(x,p)})^2}\int_Mh_2e^{w-8\pi G(x,p)}\nabla G(x,p)\cdot\nu,
\end{align*}
\begin{align*}
T_1(w,p,h_1,h_2)[\phi,\nu]=&\nabla^2_x\big(\log h_1e^{-w}+4\pi R(x,x)\big)\mid_{x=p}\cdot \nu-\nabla\phi(p).
\end{align*}
The idea now is to decompose the map in the following way:
\begin{align}
\label{3.8}
T'_{w,p}[\phi,\nu]=\left[\begin{array}{l}
T_{01}\\
T_{11}
\end{array}\right][\phi,\nu]+
\left[\begin{array}{l}
T_{02}\\
T_{12}
\end{array}\right][\phi,\nu],
\end{align}
where
$$T_{11}=0,~T_{12}=T_1,$$
\begin{align*}
T_{01}(w,p,h_1,h_2)[\phi,\nu]=&\Delta\phi+\rho_2\frac{h_2e^{w-8\pi G(x,p)}}
{\int_Mh_2e^{w-8\pi G(x,p)}}\phi\\
&-\rho_2\frac{h_2e^{w-8\pi G(x,p)}}
{(\int_Mh_2e^{w-8\pi G(x,p)})^2}\int_Mh_2e^{w-8\pi G(x,p)}\phi,
\end{align*}
and
\begin{align*}
T_{02}(w,p,h_1,h_2)[\phi,\nu]=&-8\pi\rho_2\frac{h_2e^{w-8\pi G(x,p)}}
{\int_Mh_2e^{w-8\pi G(x,p)}}\nabla G(x,p)\nu\\
&+8\pi\rho_2\frac{h_2e^{w-8\pi G(x,p)}}
{(\int_Mh_2e^{w-8\pi G(x,p)})^2}\int_Mh_2e^{w-8\pi G(x,p)}\nabla G(x,p)\nu.
\end{align*}
Let $\mathfrak{T}_1=\left[\begin{array}{l}
T_{01}\\
T_{11}
\end{array}\right]$ and $\mathfrak{T}_2=\left[\begin{array}{ll}
T_{02}\\
T_{12}
\end{array}\right].$ In this way we notice that $\mathfrak{T}_1$ is a symmetric operator; it follows that $\mathfrak{T}_1$
is a Fredholm operator of index $0.$ Combining the Sobolev inequality and the fact that $\mathbb{R}^2$ is a finite Euclidean space, it is possible to show that
$\mathfrak{T}_2$ is a compact operator. Therefore, by the operator theory, see for example \cite{k}, we get $\mathfrak{T}_1+\mathfrak{T}_2$ is
also a Fredholm linear operator with index $0$. We conclude that $T$ is a Fredholm map with index $0$ and (i) is proved.

\

We are left with the proof of (ii), i.e. that $0$ is a regular value. One gets
\begin{align*}
T_{h_1}'(w,p,h_1,h_2)[H_1]=
\left[\begin{array}{l}
\quad\quad\quad\quad0\\
\frac{\nabla H_1}{h_1}(p)-\frac{\nabla h_1}{(h_1)^2}H_1(p)
\end{array}\right],
\end{align*}
and
\begin{align*}
&T_{h_2}'(w,p,h_1,h_2)[H_2]\\
&=\left[\begin{array}{l}
\rho_2\frac{H_2e^{w-8\pi G(x,p)}}{\int_Mh_2e^{w-8\pi G(x,p)}}-
\rho_2\frac{h_2e^{w-8\pi G(x,p)}}{(\int_Mh_2e^{w-8\pi G(x,p)})^2}\int_MH_2e^{w-8\pi G(x,p)}\\
\quad\quad\quad\quad\quad\quad\quad\quad\quad\quad\quad\quad0
\end{array}\right].
\end{align*}
By choosing $\nu=0$ and $H_1$ such that $\frac{\nabla H_1}{h_1}(p)-\frac{\nabla h_1}{(h_1)^2}H_1(p)=\nabla\phi$ we obtain
\begin{align*}
&T'_{w,p}(w,p,h_1,h_2)[\phi,\nu]+T_{h_1}'(w,p,h_1,h_2)[H_1]\\
&=\left[\begin{array}{l}
\Delta\phi+\rho_2\frac{h_2e^{w-8\pi G(x,p)}}{\int_Mh_2e^{w-8\pi G(x,p)}}\phi-
\rho_2\frac{h_2e^{w-8\pi G(x,p)}}{(\int_Mh_2e^{w-8\pi G(x,p)})^2}\int_Mh_2e^{w-8\pi G(x,p)}\phi\\
\quad\quad\quad\quad\quad\quad\quad\quad\quad\quad\quad\quad\quad\quad0
\end{array}\right],
\end{align*}
which is a symmetric operator and hence a Fredholm operator of index $0.$ For this choice of $\nu$ and $H_1$, we claim that
$$
\left[\begin{array}{l}f\\0\end{array}\right] \;\subset\; \Bigr(T'_{w,p}(w,p,h_1,h_2)[\phi,\nu] + T_{h_1}'(w,p,h_1,h_2)[H_1]\Bigr) + T_{h_2}'(w,p,h_1,h_2)[H_2],
$$
for all $f\in \mathring{W}^{0,p}.$ One can observe that it is enough to prove that only $\phi=0$ can satisfy
$$\phi\in\mathrm{Ker}\left\{\Delta\cdot+\rho_2\frac{h_2e^{w-8\pi G(x,p)}}{\int_Mh_2e^{w-8\pi G(x,p)}}\cdot
-\rho_2\frac{h_2e^{w-8\pi G(x,p)}}{(\int_Mh_2e^{w-8\pi G(x,p)})^2}\int_Mh_2e^{w-8\pi G(x,p)}\cdot\right\}$$
and
$$\Big\l\phi,~\rho_2\frac{H_2e^{w-8\pi G(x,p)}}{\int_Mh_2e^{w-8\pi G(x,p)}}-
\rho_2\frac{h_2e^{w-8\pi G(x,p)}}{(\int_Mh_2e^{w-8\pi G(x,p)})^2}\int_MH_2e^{w-8\pi G(x,p)}\Big\r=0,$$
for all $H_2\in C^{2,\alpha}(M).$ The latter property can be rewritten as
$$\Big\l\phi,~\rho_2\frac{h_2e^{w-8\pi G(x,p)}}{\int_Mh_2e^{w-8\pi G(x,p)}}\frac{H_2}{h_2}-\rho_2\frac{h_2e^{w-8\pi G(x,p)}}
{(\int_Mh_2e^{w-8\pi G(x,p)})^2}\int_Mh_2e^{w-8\pi G(x,p)}\frac{H_2}{h_2}\Big\r=0.$$
We set now
$$L=\Delta\cdot+\rho_2\frac{h_2e^{w-8\pi G(x,p)}}{\int_Mh_2e^{w-8\pi G(x,p)}}\cdot
-\rho_2\frac{h_2e^{w-8\pi G(x,p)}}{(\int_Mh_2e^{w-8\pi G(x,p)})^2}\int_Mh_2e^{w-8\pi G(x,p)}\cdot\;.$$
Since $\phi\in\mathrm{Ker}(L)$ we have that
\begin{align}
\label{3.9}
\int_{M}L(\phi)\cdot\overline{H}_2=0, \qquad \forall \overline{H}_2\in W^{0,\mathfrak{p}}(M).
\end{align}
On the other hand $C^{2,\alpha}(M)$ is dense in $W^{0,\mathfrak{p}}(M)$ and
$$\Big\l\phi,\rho_2\frac{h_2e^{w-8\pi G(x,p)}}{\int_Mh_2e^{w-8\pi G(x,p)}}\overline{H}_2-\rho_2\frac{h_2e^{w-8\pi G(x,p)}}
{(\int_Mh_2e^{w-8\pi G(x,p)})^2}\int_Mh_2e^{w-8\pi G(x,p)}\overline{H}_2\Big\r=0,$$
therefore, we get
\begin{equation}
\label{3.10}
\int_M\Delta\phi\cdot\overline{H}_2=0,\qquad \forall~\overline{H}_2\in W^{0,\mathfrak{p}}(M).
\end{equation}
It follows that
\begin{equation}
\label{3.11}
\Delta\phi=0\quad \mathrm{in}~M,\qquad\int_M\phi=0,
\end{equation}
which yields $\phi\equiv0.$  Thus the claim holds true.
\medskip

On the other hand, one can find two functions $H_{1,1}$ and $H_{1,2}$ such that
$$\frac{\nabla H_{1,1}}{h_1}(p)-\frac{\nabla h_1}{(h_1)^2}H_{1,1}(p)=(1,0),$$
and
$$\frac{\nabla H_{1,2}}{h_1}(p)-\frac{\nabla h_1}{(h_1)^2}H_{1,2}(p)=(0,1).$$
Then it is not difficult to see that
$$\left[\begin{array}{l}0\\c\end{array}\right]\subset DT(w,p,h_1,h_2)[\phi,\nu],$$
for all $c\in \mathbb{R}^2.$ This concludes the proof that the differential map is onto. Hence we get (ii), i.e. that $0$ is a regular point of $T.$

\

Applying Theorem \ref{th2.1.1} we have that
$$\Big\{(h_1,h_2)\in \mathcal{B}:0~\mathrm{is~a~regular~value~of}~T(\cdot,\cdot,h_1,h_2)\Big\}$$
is residual in $\mathcal{B}.$ Since $T(w,p,h_1,h_2)$ is a Fredholm map of index $0$ for fixed $h_1,h_2,$ we have
$$\Big\{(h_1,h_2)\in \mathcal{B}:~\mathrm{the~solution}~(w,p)~\mathrm{of}~T(\cdot,\cdot,h_1,h_2)=0~\mathrm{is~non\mbox{-}degenerate}\Big\}$$
is residual in $\mathcal{B}.$ In particular, it is dense in $\mathcal{B}.$ Thus, we can choose $h_1,h_2>0$ such that the solution of (\ref{sy}) is non-degenerate.
\end{proof}

\medskip

\begin{remark}\label{rem:h}
Recall that
$$
l(p)=\Delta\log h_1(p)-\rho_2+8\pi-2K(p)
$$
plays a crucial role in the arguments. A consequence of Theorem \ref{th2.1.2} is that we can always choose $h_1$ and $h_2$ to make both the solutions to the shadow system \eqref{sy} non-degenerate and $l(p)\neq 0$.
\end{remark}
\vspace{1cm}

\section{The set of blowing up solutions} \label{sec:blowup}
The aim of this section is to describe the set of all possible bubbling solutions of (\ref{s}): in particular, we shall prove that they are contained in the set $S_{\rho_1}(p,w)\times S_{\rho_2}(p,w)$ when $\rho_1\rightarrow8\pi,$ $\rho_2\notin8\pi\mathbb{N}$, where the definition of $S_{\rho_i}(p,w),i=1,2$ is given in (\ref{4.14}) and
(\ref{4.15}). The latter description will be  used to calculate the topological degree of \eqref{s}. For the sake of simplicity we assume $M$ has a flat metric near a neighborhood of each blow up point (for the general description, see for example \cite{cl1}).

\medskip

The strategy is the following: observing that the first equation in (\ref{s}) can be written as
$$
\Delta v_{1k}+\rho_{1k}\left(\frac{\tilde{h}_ke^{v_{1k}}}{\int_{M}\tilde{h}_ke^{v_{1k}}}-1\right)=0,
$$
where
$$
\tilde{h}_k=h_1e^{-v_{2k}}.
$$
Since $\tilde{h}_k\rightarrow h$ in $C^{2,\alpha}(M)$, see Section \ref{sec:shadow}, all the estimates in \cite{chenlin, cl1} can be applied in this framework. To this end we recall now all the tools introduced in \cite{chenlin, cl1}, which are now based on a non-degenerate solution $(p,w)$ of (\ref{sy}).

\medskip

Let $(p,w)$ be a non-degenerate solution of (\ref{sy}) and set
\begin{equation}
\label{4.1}
h=h_1e^{-w}.
\end{equation}
We notice that
\begin{equation}
\label{4.2}
\nabla_x\big(\log h+4\pi R(x,x)\big)\mid_{x=p}=\nabla_x\big(\log h(x)+8\pi R(x,p)\big)\mid_{x=p}=0.
\end{equation}
For a point $q$ such that $|q-p|\ll1$ and $\lambda\gg 1,$ we introduce
\begin{align}
\label{4.3}
U(x)=\log\left( \frac{e^\lambda}{ \left(1+\frac{\rho_1h(q)}{8}\,e^{\lambda}|x-q|^{2}\right)^2}  \right).
\end{align}
It is known that $U(x)$ satisfies
\begin{align}
\label{4.4}
\Delta U(x)+\rho_1h(q)\,e^{U}=0 \quad \mathrm{in}~\mathbb{R}^2,\qquad U(q)=\max_{\mathbb{R}^2}U(x)=\lambda.
\end{align}
Following the argument in \cite{chenlin, cl1} we define
\begin{align}
\label{4.5}
H(x)=\exp\left\{\log\frac{h(x)}{h(q)}+8\pi R(x,q)-8\pi R(q,q)\right\}-1,
\end{align}
and
\begin{equation}
\label{4.6}
s=\lambda+2\log\left(\frac{\rho_1h(q)}{8}\right)+8\pi R(q,q)+2\frac{\Delta H(q)}{\rho_1h(q)}\frac{\lambda^2}{e^{\lambda}}\;.
\end{equation}

\begin{remark}
We point out that in case we do not have flat metric around the blow up points, the function $H$ in \eqref{4.5} should be modified using a conformal function $\phi$ which in particular it satisfies $\D\phi= -2Ke^\phi$. Keeping this in mind, in the following arguments one gets $\D H(q)=l(q)\,C$.
\end{remark}

\medskip

Furthermore, let $\sigma_0(t)$ be a cut-off function:
\begin{equation} \label{sigma}
\sigma_0(t)=\left\{
\begin{array}{ll}
1~&\mathrm{if}~|t|<r_0,\\
0~&\mathrm{if}~|t|\geq 2r_0.
\end{array}\right.
\end{equation}
Set $\sigma(x)=\sigma_0(|x-q|)$ and
\begin{align*}
J(x)=\left\{\begin{array}{ll}
\Big(H(x)-\nabla H(q)\cdot(x-q)\Big)\sigma(x),~&x\in B_{2r_0}(q),\\
0,&x\notin B_{2r_0}(q).
\end{array}\right.
\end{align*}
Finally, let $\eta(x)$ such that
\begin{equation}
\label{4.7}
\left\{\begin{array}{l}
\Delta\eta+\rho_1 h(q)\,e^{U}\bigr(\eta+J(x)\bigr)=0\qquad\mathrm{on}~\mathbb{R}^2,\\
\eta(q)=0,\quad\nabla\eta(q)=0.
\end{array}\right.
\end{equation}
The existence of the above function $\eta$ was proved in \cite{cl1}. Furthermore, the following result holds true.
\begin{lemma} (\cite{cl1})
\label{le4.1}
Let $R=\sqrt{\frac{\rho_1h(q)}{8}\,e^{\lambda}}.$ For $h\in C^{2,\alpha}(M)$ and large $\lambda$ there exists a solution $\eta$ satisfying (\ref{4.7}) and
the following:
\begin{itemize}
  \item [(i)] $\displaystyle{\eta(x)=-\frac{8\Delta H(q)}{\rho_1h(q)}e^{-\lambda}\Big(\log\big(R|x-q|+2\big)\Big)^2+O(\lambda e^{-\lambda}) \qquad \mathrm{on}~B_{2r_0}(q)}$, \vspace{0.2cm}
  \item [(ii)] $\eta,\,\nabla_x\eta,\,\partial_q\eta,\,\partial_{\lambda}\eta,\,\nabla_x\partial_q\eta,\,\nabla_x\partial_{\lambda}\eta=
              O(\lambda^2e^{-\lambda}) \qquad \mathrm{on}~B_{2r_0}(q)$.
\end{itemize}
\end{lemma}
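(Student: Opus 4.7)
My strategy is to reduce the problem to a linear Liouville-type equation on $\mathbb{R}^2$ by rescaling, and then exploit the explicit kernel of the linearized operator together with the designed structure of $J$. Setting $y = R(x-q)$ with $R^2 = \rho_1 h(q)e^\lambda/8$, one has $U(x) = \lambda - 2\log(1+|y|^2)$ and $\rho_1 h(q)\,e^U = 8R^2/(1+|y|^2)^2$. Writing $\tilde\eta(y)=\eta(q+y/R)$ and $\tilde J(y)=J(q+y/R)$, equation \eqref{4.7} becomes
\begin{equation*}
\Delta_y \tilde\eta + \frac{8}{(1+|y|^2)^2}\bigl(\tilde\eta + \tilde J\bigr) = 0,\qquad \tilde\eta(0)=0,\quad \nabla\tilde\eta(0)=0.
\end{equation*}
The linearized Liouville operator $L_0 := \Delta_y + 8/(1+|y|^2)^2$ has a three-dimensional bounded kernel, spanned by the dilation mode $\phi_0=(1-|y|^2)/(1+|y|^2)$ and the translation modes $\phi_i = y_i/(1+|y|^2)$, $i=1,2$. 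I would therefore solve via a Fredholm alternative for $L_0$, after understanding the projections of the source against this kernel.

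The key point is the structure of $\tilde J$. Since $H(q)=0$ and the linear part $\nabla H(q)\cdot(x-q)$ has been subtracted in the definition of $J$, Taylor expansion yields $\tilde J(y) = \tfrac{1}{2R^2}D^2 H(q)(y,y) + O(|y|^3/R^3)$ on the support of the cut-off, and $\tilde J\equiv 0$ outside. The leading quadratic part is even in $y$, so its projections on the odd translation modes $\phi_1,\phi_2$ vanish; this supplies the solvability modulo translations, while the remaining translational freedom is fixed by $\nabla\tilde\eta(0)=0$. The projection against the radial mode $\phi_0$ is non-trivial and is precisely what generates the leading $\Delta H(q)$-dependent term in (i). On the orthogonal complement of the kernel, $L_0$ is an isomorphism (in weighted spaces permitting logarithmic growth), so the non-radial part of $\tilde J$ produces only lower-order corrections.

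For (i), I would decompose $\tilde J = \tilde J_{\mathrm{rad}} + \tilde J_\perp$ with $\tilde J_{\mathrm{rad}}(r) = \tfrac{r^2}{4R^2}\Delta H(q) + O(r^3/R^3)$, and solve the radial ODE by variation of parameters using the fundamental pair given by $\phi_0$ and a second solution of logarithmic growth. Imposing $\tilde\eta_{\mathrm{rad}}(0)=0$ and integrating the source against the Wronskian weight yields
\begin{equation*}
\tilde\eta_{\mathrm{rad}}(r) = -\frac{\Delta H(q)}{R^2}\bigl(\log(r+2)\bigr)^2 + O\!\left(\frac{\log(r+2)}{R^2}\right),
\end{equation*}
which, after translating back via $r=R|x-q|$ and $R^2 = \rho_1 h(q)\,e^\lambda/8$, produces exactly the asymptotic in (i); the perpendicular piece $\tilde\eta_\perp$ is absorbed in the $O(\lambda e^{-\lambda})$ error. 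For (ii), I would differentiate \eqref{4.7} in $q$ and $\lambda$: each derivative solves a linear equation of the same form with source built from $\partial_* U$, $\partial_* J$ and the already constructed $\eta$, each bounded by $C\lambda^2 e^{-\lambda}$, and the same kernel-projection / inversion machinery delivers the stated $O(\lambda^2 e^{-\lambda})$ bounds on $\eta$ and all its first derivatives.

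\textbf{Main obstacle.} A crude $L^\infty$ bound on the source only yields $\eta = O(e^{-\lambda})$, whereas the sharp profile in (i) involves the factor $(\log(R|x-q|+2))^2$ reflecting the slow logarithmic behavior of the Green's function of $L_0$; extracting it requires the explicit variation-of-parameters computation, not a soft Fredholm argument. The derivative estimates in (ii) are more subtle: differentiating $\tilde J$ in $q$ produces a new source involving $\nabla(\Delta H)(q)$ whose odd-mode projections no longer vanish identically, and one must verify that the choice of cut-off and the structure of $J$ still keep the translation-mode projections of size $O(1/R)$, so that no extra power of $\lambda$ is picked up when inverting $L_0$ to control $\nabla_x\partial_q\eta$ and $\nabla_x\partial_\lambda\eta$.
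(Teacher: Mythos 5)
The paper does not prove this lemma: it is quoted verbatim from Chen--Lin's earlier work on the degree of the single mean field equation (the citation \cite{cl1} in the statement is the proof), so there is no ``paper's own proof'' to compare against. Your sketch is therefore really a reconstruction of the argument in \cite{cl1}, and as such it captures the correct strategy and the main mechanism.

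The key steps you identify are the right ones: rescaling $y=R(x-q)$ turns $\eqref{4.7}$ into the flat linearized Liouville problem $\Delta_y\tilde\eta+\frac{8}{(1+|y|^2)^2}(\tilde\eta+\tilde J)=0$; since $H(q)=0$ and the gradient term has been explicitly subtracted in the definition of $J$, the source $\tilde J$ vanishes to second order at $0$ and its even leading quadratic part has zero projection on the translation modes $\phi_i=y_i/(1+|y|^2)$; the normalizations $\tilde\eta(0)=0$, $\nabla\tilde\eta(0)=0$ pin the free kernel directions; and the nontrivial pairing with the radial mode $\phi_0=(1-|y|^2)/(1+|y|^2)$ forces logarithmic growth, extracted by variation of parameters. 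Your radial computation is correct: with $\tilde J_{\mathrm{rad}}\sim\frac{\Delta H(q)}{4R^2}r^2$ and $\Delta((\log r)^2)=2/r^2$, matching the far-field source $-\frac{2\Delta H(q)}{R^2 r^2}$ forces $\tilde\eta_{\mathrm{rad}}\sim-\frac{\Delta H(q)}{R^2}(\log r)^2$, and $1/R^2=8e^{-\lambda}/(\rho_1 h(q))$, $r=R|x-q|$ give exactly the coefficient and argument in (i).

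A few points deserve more care if this were to be turned into a complete proof. First, solvability must be set up in weighted spaces allowing logarithmic growth, where the bounded kernel of $L_0$ no longer obstructs: you state this, but it is the load-bearing step of the Fredholm argument and would need spelling out. Second, the pointwise bound $\nabla_x\eta=O(\lambda^2e^{-\lambda})$ in (ii) is delicate near $x=q$: a naive estimate $\nabla_x\eta=R\,\nabla_y\tilde\eta$ with $\tilde\eta\sim R^{-2}$ at $|y|\sim 1$ produces $O(R^{-1})=O(e^{-\lambda/2})$, so one must exploit the vanishing $\tilde\eta(0)=0$, $\nabla\tilde\eta(0)=0$ (which force $\tilde\eta=O(|y|^4/R^2)$ near the origin) and cancellations in the transition region to recover the stated rate; this is also where the $\partial_q$ and $\partial_\lambda$ derivatives pick up the scale-factor terms $\partial_q R$, $\partial_\lambda R$. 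You correctly flag the mixed derivatives $\nabla_x\partial_q\eta$, $\nabla_x\partial_\lambda\eta$ as the hardest case; in \cite{cl1} the odd-mode projections generated by $\partial_q\tilde J$ are controlled using the specific structure of $H$ (it vanishes at $q$, and the extremality condition $\nabla(\log h+4\pi R(\cdot,\cdot))(q)=0$ keeps $\nabla H(q)$ small) together with the choice of cut-off, and this bookkeeping is where the extra $\lambda$-power in (ii), as opposed to (i), is lost.
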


\medskip

\noindent The blowing up solutions will be very well approximated by the following functions $v_{q,\lambda,a}$, see Proposition~\ref{le4.2} below:
\begin{align}
\label{4.8}
\left\{\begin{array}{ll}
v_q(x)=\Big(U(x)+\eta(x)+8\pi\big(R(x,q)-R(q,q)\big)+s\Big)\sigma(x)+8\pi G(x,q)\big(1-\sigma(x)\big),\vspace{0.2cm}\\
\overline{v}_q=\frac{1}{|M|}\int_Mv_q,\vspace{0.2cm}\\
v_{q,\lambda,a}=a(v_q-\overline{v}_q).
\end{array}\right.
\end{align}
We notice that $v_q(x)$ depends on $q$ and $\lambda.$ We will show that the error term in the approximation belongs to following sets:
\begin{align}
\label{4.9}
O^{(1)}_{q,\lambda}=&\left\{\phi\in\mathring{H}^1(M)~\Big{|}~\int_M\nabla\phi\cdot\nabla v_{q}=\int_M\nabla\phi\cdot\nabla\partial_{q}v_{q}=
\int_M\nabla\phi\cdot\nabla\partial_{\lambda}v_{q}=0\right\},
\end{align}
and
\begin{equation}
\label{4.10}
O^{(2)}_{q,\lambda}=\left\{\psi\in W^{2,\mathfrak{p}}(M)~\Big{|}~\int\psi=0\right\},\quad \mathfrak{p}>2.
\end{equation}
The idea will be then to consider the following decomposition
$$\mathring{H}^1=O_{q,\lambda}^{(1)}\bigoplus~\biggr\{\mathrm{linear~subspace~spanned~by}~v_{q},
\partial_{\lambda}v_{q}~\mathrm{and}~\partial_{q}v_{q}\biggr\}.$$
For future references, for any $(q,\lambda)$ we define
\begin{equation}
\label{4.11}
t=\lambda+8\pi R(q,q)+2\log \frac{\rho_1h(q)}{8}+\frac{2\Delta H(q)}{\rho_1 h(q)}\lambda^2e^{-\lambda}-\overline{v}_q = s - \overline{v}_q.
\end{equation}

\medskip

The last construction is concerned with $\rho_1\neq 8\pi$. For a non-degenerate solution $(p,w)$ of (\ref{sy}) we define $\lambda(\rho_1)$ such that
\begin{align}
\label{4.12}
\rho_1-8\pi=\frac{2\big(\Delta\log h(p)+8\pi-2K(p)\big)}{h(p)}\lambda(\rho_1)\,e^{-\lambda(\rho_1)},
\end{align}
where $K(p)$ is the Gaussian curvature in $p$. By (\ref{sy}) we have $e^{-8\pi G(x,p)}\mid_{x=p}=0$ and $\Delta w(p)=\rho_2.$ Therefore
\begin{align}
\label{4.13}
\Delta\log h(p)+8\pi-2K(p)=&\Delta\log h_1(p)-\rho_2+8\pi-2K(p).
\end{align}
We stress that to  be $\lambda(\rho_1)$ well-defined one has to require
$$\Delta\log h_1(p)-\rho_2+8\pi-2K(p)\neq0.$$

\

Let $c_1$ be a positive constant, which will be chosen later. Recall the definitions of $O_{q,\lambda}^{(1)}$, $O_{q,\lambda}^{(2)}$ in \eqref{4.9} and \eqref{4.10}, respectively. For $\rho_1\neq 8\pi$ we set
\begin{align}
\label{4.14}
S_{\rho_1}(p,w)=\Big\{&v_1=v_{q,\lambda,a}+\phi~\Big{|}~|q-p|\leq c_1\lambda(\rho_1)\,e^{-\lambda(\rho_1)},\nonumber\\
&|\lambda-\lambda(\rho_1)|\leq c_1\lambda(\rho_1)^{-1},~
|a-1|\leq c_1\lambda(\rho_1)^{-\frac12}e^{-\lambda(\rho_1)},\nonumber\\
&\phi\in O_{q,\lambda}^{(1)}
~\mathrm{and}~\|\phi\|_{H^1(M)}\leq c_1\lambda(\rho_1)\,e^{-\lambda(\rho_1)}\Big\},
\end{align}
and
\begin{equation}
\label{4.15}
S_{\rho_2}(p,w)=\Big\{v_2=w+\psi~\Big{|}~\psi\in O_{q,\lambda}^{(2)}~\mathrm{and}~\|\psi\|_*\leq c_1\lambda(\rho_1)\,e^{-\lambda(\rho_1)}\Big\},
\end{equation}
where $\|\psi\|_*=\|\psi\|_{W^{2,\mathfrak{p}}(M)}.$\\

The goal is to prove that for a sequence of bubbling solutions $(v_{1k},v_{2k})$ of (\ref{s}), $\rho_{1k}\to8\pi$, $\rho_2\notin8\pi\N$, we have
$$(v_{1k},v_{2k})\in S_{\rho_{1k}}(p,w)\times S_{\rho_2}(p,w).$$
More precisely:
\begin{proposition}
\label{le4.2}
Let $(v_{1k},v_{2k})$ be a sequence of blow up solutions of (\ref{s}) for $\rho_{1k}\to8\pi$, $\rho_2\notin8\pi\N$: in particular $v_{1k}$ blows up at $p$, weakly converges to $8\pi G(x,p)$ and $v_{2k}\rightarrow w$ in $C^{2,\alpha}(M).$ Suppose $(p,w)$ is a non-degenerate solution of (\ref{sy}) and
\begin{align}
\label{4.16}
\Delta\log h_1(p)-\rho_2+8\pi-2K(p)\neq 0.
\end{align}
Then, there exist $q^*_k$, $\lambda^*_k,$ $a^*_k,$ $\phi^*_k,$ $\psi^*_k$ such that
\begin{equation}
\label{4.17}
v_{1k}=v_{q^*_k,\lambda^*_k,a^*_k}+\phi^*_k,~\quad v_{2k}=w+\psi^*_k,
\end{equation}
and
$(v_{1k},v_{2k})\in S_{\rho_{1k}}(p,w)\times S_{\rho_2}(p,w)$.
\end{proposition}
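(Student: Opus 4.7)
The plan is to produce the decomposition by a nonlinear projection and then extract the rates from the bubble analysis coupled with the non-degeneracy of $(p,w)$. For the second component I simply set $\psi^*_k := v_{2k} - w$: the mean-zero conditions $\int_M v_{2k} = \int_M w = 0$ give $\psi^*_k \in O^{(2)}_{q,\lambda}$ automatically. For the first component, I seek parameters $(q,\lambda,a) \in M \times \R \times \R$ near $(p,\lambda(\rho_{1k}),1)$ so that $\phi := v_{1k} - v_{q,\lambda,a}$ satisfies the four orthogonality conditions $\int \nabla\phi\cdot\nabla v_q = \int \nabla\phi\cdot\nabla \partial_{q_i}v_q = \int \nabla\phi\cdot\nabla \partial_\lambda v_q = 0$ defining $O^{(1)}_{q,\lambda}$. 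This is four unknowns in four equations; at the natural base point the Jacobian, computed from the explicit form \eqref{4.8} of $v_q$ together with Lemma \ref{le4.1}, is invertible because $\{v_q,\partial_q v_q,\partial_\lambda v_q\}$ is \emph{almost orthogonal} in $H^1(M)$ as in \cite{cl1}. An implicit function argument then produces unique parameters $(q^*_k,\lambda^*_k,a^*_k)$ for which $\phi^*_k := v_{1k} - v_{q^*_k,\lambda^*_k,a^*_k}$ lies in $O^{(1)}_{q^*_k,\lambda^*_k}$.

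The estimate on $\psi^*_k$ comes from subtracting the first equation of the shadow system \eqref{sy} from the second equation of \eqref{s}, producing a linear equation $\mathcal{L}_w \psi^*_k = \mathcal{R}_k$, where $\mathcal{L}_w$ is the linearization at $w$ of the first equation of \eqref{sy} and $\mathcal{R}_k$ encodes the discrepancy between $\rho_{1k} h_1 e^{v_{1k}-v_{2k}} / \int h_1 e^{v_{1k}-v_{2k}}$ and the limit measure $8\pi\delta_p$. The bubble estimates \eqref{4.3}--\eqref{4.7} and Lemma \ref{le4.1} give $\|\mathcal{R}_k\|_{L^\mathfrak{p}} = O(\lambda(\rho_{1k}) e^{-\lambda(\rho_{1k})})$, and the non-degeneracy of $(p,w)$ makes $\mathcal{L}_w$ invertible on the mean-zero subspace (modulo the coupling with the $q$-equation, handled next); $W^{2,\mathfrak{p}}$-regularity then yields $\|\psi^*_k\|_* \leq c_1 \lambda(\rho_{1k}) e^{-\lambda(\rho_{1k})}$. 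For the parameter rates, plugging $v_{1k} = v_{q^*_k,\lambda^*_k,a^*_k} + \phi^*_k$ into the first equation of \eqref{s} and using Lemma \ref{le4.1}, the local mass identity expands as
\begin{equation*}
\rho_{1k} - 8\pi = \frac{2(\Delta\log h_1(p) - \rho_2 + 8\pi - 2K(p))}{h(p)}\,\lambda^*_k e^{-\lambda^*_k} + o(\lambda^*_k e^{-\lambda^*_k}),
\end{equation*}
which, together with the hypothesis \eqref{4.16} and the definition \eqref{4.12} of $\lambda(\rho_{1k})$, gives $|\lambda^*_k - \lambda(\rho_{1k})| \leq c_1 \lambda(\rho_{1k})^{-1}$. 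Testing the same equation against $\partial_{q_i} v_q$ produces a Pohozaev-type identity whose leading term reproduces the second equation of \eqref{sy} at $q^*_k$; combined with the control on $\psi^*_k$ and the non-degeneracy of the full shadow system this yields $|q^*_k - p| \leq c_1 \lambda(\rho_{1k}) e^{-\lambda(\rho_{1k})}$. Finally, $|a^*_k - 1| \leq c_1 \lambda(\rho_{1k})^{-1/2} e^{-\lambda(\rho_{1k})}$ follows from comparing the mean-zero normalization of $v_{1k}$ with the explicit expression for $\overline{v}_q$, and $\|\phi^*_k\|_{H^1} \leq c_1 \lambda(\rho_{1k}) e^{-\lambda(\rho_{1k})}$ comes from elliptic bootstrap on the equation satisfied by $\phi^*_k$, using invertibility of the linearized first equation of \eqref{s} restricted to $O^{(1)}_{q^*_k,\lambda^*_k}$.

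The main obstacle is establishing the sharp rate $|q^*_k - p| = O(\lambda(\rho_{1k}) e^{-\lambda(\rho_{1k})})$: it requires simultaneously tracking subleading terms in the equation for $\psi^*_k$ and in the Pohozaev-type identity for the $q$-variable, and then closing a coupled system using the full non-degeneracy of $(p,w)$ as a solution of \eqref{sy} (not merely the non-degeneracy of each of its two equations separately). The remaining rates follow more mechanically from the bubble ansatz \eqref{4.3}--\eqref{4.8} and Lemma \ref{le4.1}.
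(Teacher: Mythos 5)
Your proposal follows the same blueprint as the paper's proof: reduce the first equation of \eqref{s} to a single Liouville equation with $\tilde h_k = h_1 e^{-v_{2k}}$ (implicitly, via the bubble estimates \eqref{4.3}--\eqref{4.7} and Lemma~\ref{le4.1}), extract the rate of $\rho_{1k}-8\pi$ from the Chen--Lin expansion to pin down $\lambda_k^*$, and estimate $\psi_k = v_{2k}-w$ and $|q_k-p|$ jointly by hitting the second equation with the linearization of the \emph{coupled} shadow system and invoking non-degeneracy of $(p,w)$. The paper proceeds in a slightly different order --- it first fixes $a=1$, obtains the raw bound $\|v_{1k}-v_{q_k,\lambda_k,1}\|_{L^\infty} = O(\lambda_k e^{-\lambda_k})$, then derives the coupled estimate \eqref{4.37}, and only at the end (via \cite[Lemma 3.2]{cl1}) orthogonalizes to produce $(q_k^*,\lambda_k^*,a_k^*,\phi_k^*)$ --- whereas you orthogonalize first via an implicit function argument and then establish rates; these orderings are interchangeable.

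One step in your reasoning is wrong, however: you attribute the rate $|a_k^*-1|\leq c_1\lambda^{-1/2}e^{-\lambda}$ to ``comparing the mean-zero normalization of $v_{1k}$ with the explicit expression for $\overline{v}_q$.'' But $v_{q,\lambda,a}=a(v_q-\overline{v}_q)$ has vanishing mean for \emph{every} $a$, so the normalization gives no information about $a$ whatsoever. The parameter $a$ is in fact determined by the orthogonality $\int_M\nabla\phi\cdot\nabla v_q=0$, and the bound on $|a-1|$ comes from feeding the raw error estimate $\|v_{1k}-v_{q,\lambda,1}\|=O(\lambda e^{-\lambda})$ into that constraint together with the estimate $\|\nabla v_q\|_{L^2}^2\sim\lambda$; this is precisely the content of \cite[Lemma 3.2]{cl1}, which the paper invokes at \eqref{4.43}. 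Since your implicit-function set-up at the start already contains the right machinery, the fix is simply to route the $a$-estimate through that projection argument (as you already do for $q$ and $\lambda$) rather than through the mean-zero normalization.
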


\begin{proof} Recall that $v_{1k}$ and $v_{2k}$ satisfy
\begin{equation}
\label{4.18}
\left\{\begin{array}{l}
\Delta v_{1k}+\rho_{1k}\Big(\frac{h_1e^{v_{1k}-v_{2k}}}{\int_{M}h_1e^{v_{1k}-v_{2k}}}-1\Big)=0,\\
\Delta v_{2k}+\rho_{2}\Big(\frac{h_2e^{v_{2k}-v_{1k}}}{\int_{M}h_2e^{v_{2k}-v_{1k}}}-1\Big)=0.
\end{array}\right.
\end{equation}
We write the first equation of the above system as
\begin{equation}
\label{4.19}
\Delta v_{1k}+\rho_{1k}\left(\frac{\tilde{h}_ke^{v_{1k}}}{\int_{M}\tilde{h}_ke^{v_{1k}}}-1\right)=0,
\end{equation}
where
\begin{equation}
\label{4.20}
\tilde{h}_k=h_1e^{-v_{2k}}=he^{-\psi_k} \qquad \mathrm{and} \qquad \psi_k=v_{2k}-w.
\end{equation}
We recall now the following fact: since $\tilde{h}_k\rightarrow h$ in $C^{2,\alpha}(M)$, see Section \ref{sec:shadow}, all the estimates in \cite{chenlin, cl1} can be applied in this framework. This will lead to the approximation of $v_{1k}$. In the second step we use the latter approximation jointly with the non-degeneracy of the shadow system \eqref{sy} to get the estimate of the error term in the approximation of $v_{2k}$ and in turn of the error of $v_{1k}$.

\

We follow the arguments in \cite{chenlin, cl1}. All the details can be found in these papers. Let $\tilde{q}_k$ be the maximal point of $\tilde{v}_{1k}$ near $p$, where
$$
\tilde{v}_{1k}=v_{1k}-\log\int_M\tilde{h}_k\,e^{v_{1k}}.
$$
As in page 13 of \cite{chenlin} we let
\begin{align*}
\lambda_{k}=\tilde{v}_{1k}(\tilde{q}_{k})-\log\int_M\tilde{h}_k\,e^{v_{1k}}.
\end{align*}
Around $\tilde{q}_{k}$ we set
\begin{equation*}
\tilde{U}_{k}(x)=\log\frac{e^{\lambda_k}}{\left(1+\frac{\rho_{1k}\tilde{h}_k(q_{k})}{8}\,e^{\lambda_{k}}|x-q_{k}|^2\right)^2}\;.
\end{equation*}
where $q_k$ is chosen such that
$$\nabla\tilde{U}_k(\tilde{q}_k)=\nabla\log \tilde{h}_k(\tilde{q}_k).$$
It is easy to check that $|q_k-\tilde{q}_k|=O(e^{-\lambda_k}).$
Then the error terms of the approximation inside and outside $B_{r_0}(q_k)$ are given by
\begin{equation}
\label{4.21}
\tilde{\eta}_{k}(x)=\tilde{v}_{1k}-\tilde{U}_{k}(y)-\bigr(8\pi R(x,q_k)-8\pi R(q_k,q_{k})\bigr), \qquad \mbox{in } B_{r_0}(q_k),
\end{equation}
\begin{equation}
\label{4.22}
\xi_k(x)=v_{1k}(x)-8\pi G(x,q_{k})= \tilde v_{1k}(x)-8\pi G(x,q_{k})-\bar{\tilde {v}}_{1k}, \qquad \mbox{in } M\setminus B_{r_0}(q_k).
\end{equation}
By the Lemma 5.3 in \cite{chenlin} we have
\begin{equation}
\label{4.23}
\xi_k(x)=O(\lambda_ke^{-\lambda_k}) \qquad \mathrm{for}~x\in M\setminus B_{r_0}(q_k).
\end{equation}
By a straightforward computation, see page 20 in \cite{chenlin}, the error term $\tilde{\eta}_k$ satisfies
\begin{equation}
\label{4.24}
\Delta\tilde{\eta}_{k}+\rho_{1k}\,\tilde{h}_k(q_{k})\,e^{\tilde{U}_{k}}\tilde{H}_{k}(x,\tilde{\eta}_{k})=0,
\end{equation}
where (see also \eqref{4.28})
\begin{align*}
\tilde{H}_{k}(x,t)=&\exp\left\{\log\frac{\tilde{h}_k(x)}{\tilde{h}_k(q_{k})}+8\pi\big(R(x,q_k)-R(q_k,q_{k})\big)+t\right\}-1\\
=&H_{k}(x)+t+O(|t|^2),
\end{align*}
and
\begin{align*}
H_k(x)=\exp\left\{\log \frac{\tilde{h}_k(x)}{\tilde{h}_k(q_k)}+8\pi R(x,q_k)-8\pi R(q_k,q_k)\right\}-1.
\end{align*}
Except for the higher-order terms, equation (\ref{4.24}) resembles the one in (\ref{4.7}). By Theorem 1.4 in \cite{chenlin} one has
\begin{equation}
\label{4.25}
\tilde{\eta}_{k}(x)=-\frac{8}{\rho_{1k}\,\tilde{h}_k(q_{k})}\Delta H_k(q_{k})\,e^{-\lambda_{k}}\bigr(\log(R_{k}|x-q_{k}|+2)\bigr)^2
+O(\lambda_{k}\,e^{-\lambda_{k}}),
\end{equation}
for $x\in B_{2r_0}(q_{k}),$ where $R_k=\sqrt{\frac{\rho_{1k}\tilde{h}_k(q_{k})}{8}e^{\lambda_{k}}}.$

\medskip

Moreover, from \cite[Theorem 1.1, Theorem 1.4 and Lemma 5.4]{chenlin}, we have the following estimates:
\begin{equation}
\label{4.26}
\rho_{1k}-8\pi=\frac{2\big(\Delta\log\tilde{h}_k(q_k)+8\pi-2K(q_k)\big)}{\tilde{h}_k(q_{k})}\lambda_{k}e^{-\lambda_{k}}
+O(e^{-\lambda_{k}}),
\end{equation}
\begin{align}
\label{4.27}
\overline{\tilde{v}}_{1k}+\lambda_{k}+2\log\frac{\rho_{1k}\tilde{h}_k(q_{k})}{8}+8\pi R(q_k,q_{k})+
\frac{2\Delta H_k(q_{k})}{\rho_{1k}\tilde{h}_k(q_{k})}\lambda_k^2\,e^{-\lambda_{k}}=O(\lambda_{k}\,e^{-\lambda_{k}}),
\end{align}
and
\begin{equation}
\label{4.28}
|\nabla H_{k}(q_{k})|=O(\lambda_{k}\,e^{-\lambda_{k}}).
\end{equation}

\medskip

With these preparations, we now let $\eta_{k}$ be defined as in (\ref{4.7}) and $v_{q_k,\lambda_k,a_k}$ be defined as in (\ref{4.8}) with $q=q_k$, $\lambda=\lambda_k$, $a=a_k=1$ and $h$ replaced by $h_k$. We start by observing that from Lemma \ref{le4.1} and (\ref{4.25}) we deduce
\begin{align}
\label{4.29}
\eta_{k}(x)=\tilde{\eta}_{k}+O(\lambda_{k}e^{-\lambda_{k}}) \qquad \mathrm{for}~x\in B_{2r_0}(q_k).
\end{align}
For $x\in B_{r_0}(q_k),$ recalling the definition of $s$ in \eqref{4.6}:
\begin{align*}
v_{q_k,\lambda_k,a_k}=~&\tilde{U}_{k}(x)+\eta_{k}(x)+\big(8\pi R(x,q_k)-8\pi R(q_k,q_{k})\big)+\lambda_{k}+2\log\frac{\rho_{1k}\tilde{h}_k(q_{k})}{8}\\
&+8\pi R(q_k,q_{k})+\frac{2\Delta H_k(q_{k})}{\rho_{1k}\tilde{h}_k(q_{k})}\lambda_{k}^2e^{-\lambda_{k}}-\overline{v}_{q_k},
\end{align*}
where $\overline{v}_{q_k}$ stands for the average of $v_{q_k}$. From \cite[Lemma 2.2 and Lemma 2.3]{cl1}, we have
\begin{align}
\label{4.30}
\begin{split}
&v_{q_k}-8\pi G(x,q_k)=O(\lambda_k\,e^{-\lambda_k}) \qquad \mathrm{in}~M\setminus B_{r_0}(q_k), \\
&\overline{v}_{q_k}=O(\lambda_k\,e^{-\lambda_k}).
\end{split}
\end{align}
By (\ref{4.21}), (\ref{4.27}), (\ref{4.29}) and (\ref{4.30}), we have
\begin{align}
\label{4.31}
v_{1k}-v_{q_k,\lambda_k,a_k}=~&\tilde{v}_{1k}+\log \int_{M}\tilde{h}_ke^{v_{1k}}-v_{q_k,\lambda_k,a_k}\nonumber\\
=~&\tilde{v}_{1k}-\tilde{U}_{k}-\big(8\pi R(x,x)-8\pi R(x,q_{k})\big)-\eta_{k}(x)+O(\lambda_{k}\,e^{-\lambda_{k}})\nonumber\\
=~&\tilde{\eta}_{k}(x)-\eta_{k}(x)+O(\lambda_{k}\,e^{-\lambda_{k}})=O(\lambda_{k}\,e^{-\lambda_{k}})
\end{align}
for $x\in B_{r_0}(q_k)$. For $x\in M\setminus B_{r_0}(q_k),$ by (\ref{4.22}) and (\ref{4.30}) we get
\begin{align*}
v_{1k}-v_{q_k,\lambda_k,a_k}=v_{1k}-8\pi G(x,q_k)-(v_{q_k}-8\pi G(x,q_k))+\overline{v}_{q_k}=O(\lambda_k\,e^{-\lambda_k}).
\end{align*}
Thus, we conclude that
\begin{align}
\label{4.32}
v_{1k}=v_{q_k,\lambda_k,a_k}+\phi_{k}, \qquad \mathrm{where}~\|\phi_{k}\|_{L^{\infty}(M)}<\tilde{c}\lambda_k\,e^{-\lambda_k},
\end{align}
where $\tilde{c}$ is independent of $\psi_k.$

\

Next, to get the estimate of the error $\psi_k$ we \emph{evaluate} it on the linearized operator of the first equation in the shadow system \eqref{sy}. By using $v_{2k}=w+\psi_k$ and the second equation of (\ref{4.18}) we get
\begin{align}
\label{4.33}
\mathcal{L}(\psi_{k})=\mathbb{I}_1+\mathbb{I}_2+\mathbb{I}_3,
\end{align}
where
\begin{align*}
\mathcal L(\psi_k)=~&\Delta\psi_{k}+\rho_{2}\frac{h_2e^{w-8\pi G(x,p)}}{\int_{M}h_2e^{w-8\pi G(x,p)}}\psi_{k}\\
&-\rho_{2}\frac{h_2e^{w-8\pi G(x,p)}}
{(\int_Mh_2e^{w-8\pi G(x,p)})^2}\int_{M}(h_2e^{w-8\pi G(x,p)}\psi_{k})\\
&-8\pi\rho_{2}\frac{h_2e^{w-8\pi G(x,p)}}{\int_{M}h_2e^{w-8\pi G(x,p)}}\big(\nabla G(x,p)(q_{k}-p)\big)\\
&+8\pi\rho_{2}\frac{h_2e^{w-8\pi G(x,p)}}{(\int_{M}h_2e^{w-8\pi G(x,p)})^2}
\int_{M}\big(h_2e^{w-8\pi G(x,p)}(\nabla G(x,p)(q_{k}-p))\big),
\end{align*}
\begin{align*}
\mathbb{I}_1=&-\rho_{2}\frac{h_2e^{w+\psi_{k}-v_{1k}}}{\int_{M}h_2e^{w+\psi_{k}-v_{1k}}}
+\rho_{2}\frac{h_2e^{w+\psi_{k}-8\pi G(x,q_k)}}
{\int_{M}h_2e^{w+\psi_{k}-8\pi G(x,q_k)}},
\end{align*}
\begin{align*}
\mathbb{I}_2=&\rho_{2}\frac{h_2e^{w-8\pi G(x,p)}}{\int_{M}h_2e^{w-8\pi G(x,p)}}
-\rho_{2}\frac{h_2e^{w+\psi_{k}-8\pi G(x,p)}}{\int_{M}h_2e^{w+\psi_{k}-8\pi G(x,p)}}\\
&+\rho_{2}\frac{h_2e^{w-8\pi G(x,p)}}{\int_{M}h_2e^{w-8\pi G(x,p)}}\psi_{k}\\
&-\rho_{2}\frac{h_2e^{w-8\pi G(x,p)}}
{(\int_{M}h_2e^{w-8\pi G(x,p)})^2}\int_{M}(h_2e^{w-8\pi G(x,p)}\psi_{k}),
\end{align*}
and
\begin{align*}
\mathbb{I}_3=&-\rho_{2}\frac{h_2e^{w+\psi_{k}-8\pi G(x,q_k)}}{\int_{M}h_2e^{w+\psi_{k}-8\pi G(x,q_k)}}
+\rho_{2}\frac{h_2e^{w+\psi_{k}-8\pi G(x,p)}}{\int_{M}h_2e^{w+\psi_{k}-8\pi G(x,p)}}\\
&-8\pi\rho_{2}\frac{h_2e^{w-8\pi G(x,p)}}{\int_{M}h_2e^{w-8\pi G(x,p)}}(\nabla G(x,p)(q_{k}-p))\\
&+8\pi\rho_{2}\frac{h_2e^{w-8\pi G(x,p)}}{(\int_{M}h_2e^{w-8\pi G(x,p)})^2}
\int_{M}\big(h_2e^{w-8\pi G(x,p)}(\nabla G(x,p)(q_{k}-p))\big).
\end{align*}
Reasoning as in \cite{lwy} and decomposing the domain into $B_{r_0}(q_{k})$ and $M\setminus B_{r_0}(q_{k})$, it is not difficult to show that  $\mathbb{I}_1=O(\lambda_{k}\,e^{-\lambda_{k}})$ and $\mathbb{I}_2=O(\|\psi_k\|^2_{*}).$ Concerning $\mathbb{I}_3$, we divide it into three parts: $\mathbb{I}_3=\mathbb{I}_{31}+\mathbb{I}_{32}+\mathbb{I}_{33},$
where
\begin{align*}
\mathbb{I}_{31}=~&-\rho_{2}\frac{h_2e^{w+\psi_{k}-8\pi G(x,q_{k})}}
{\int_{M}h_2e^{w+\psi_{k}-8\pi G(x,q_{k})}}+\rho_{2}\frac{h_2e^{w+\psi_{k}-8\pi G(x,p)}}
{\int_{M}h_2e^{w+\psi_{k}-8\pi G(x,p)}}\\
&-8\pi\rho_{2}\frac{h_2e^{w+\psi_k-8\pi G(x,p)}}{\int_{M}h_2e^{w+\psi_k-8\pi G(x,p)}}\big(\nabla G(x,p)(q_{k}-p)\big)\\
&+8\pi\rho_{2}\frac{h_2e^{w+\psi_k-8\pi G(x,p)}}{(\int_{M}h_2e^{w+\psi_k-8\pi G(x,p)})^2}
\int_{M}\Big(h_2e^{w+\psi_k-8\pi G(x,p)}(\nabla G(x,p)(q_{k}-p))\Big),
\end{align*}
\begin{align*}
\mathbb{I}_{32}=~&8\pi\rho_{2}\frac{h_2e^{w-8\pi G(x,p)}}{(\int_{M}h_2e^{w-8\pi G(x,p)})^2}
\int_{M}\Big(h_2e^{w-8\pi G(x,p)}(\nabla G(x,p)(q_{k}-p))\Big)\\
&-8\pi\rho_{2}\frac{h_2e^{w+\psi_k-8\pi G(x,p)}}{(\int_{M}h_2e^{w+\psi_k-8\pi G(x,p)})^2}
\int_{M}\Big(h_2e^{w+\psi_k-8\pi G(x,p)}(\nabla G(x,p)(q_{k}-p))\Big),
\end{align*}
and
\begin{align*}
\mathbb{I}_{33}=~&8\pi\rho_{2}\frac{h_2e^{w+\psi_k-8\pi G(x,p)}}{\int_{M}h_2e^{w+\psi_k-8\pi G(x,p)}}\big(\nabla G(x,p)(q_{k}-p)\big)\\
&-8\pi\rho_{2}\frac{h_2e^{w-8\pi G(x,p)}}{\int_{M}h_2e^{w-8\pi G(x,p)}}\big(\nabla G(x,p)(q_{k}-p)\big).
\end{align*}
It is not difficult to see
$$\mathbb{I}_{31}=O(|q_k-p|^2), \quad \mathbb{I}_{32}=O(1)\|\psi\|_*|q_k-p|, \quad \mathbb{I}_{33}=O(1)\|\psi\|_*|q_k-p|.$$
In conclusion we have the estimate
\begin{align}
\label{4.34}
\mathcal{L}(\psi_k)=o(1)\|\psi_k\|_*+O\bigr(\|\psi_k\|^2_{*}+\lambda_ke^{-\lambda_k}\bigr)+O(|p-q_k|^2).
\end{align}

\

Now, to get the estimate of the error $|p-q_k|$ we \emph{evaluate} it on the linearized operator of the second equation in the shadow system \eqref{sy}. By the definition of $H_k$ (see below \eqref{4.24}) and (\ref{4.28}), we have
\begin{equation}
\label{4.35}
\nabla H_k(q_k)=\nabla\log h(q_k)-\nabla\psi_k(q_k)+8\pi\nabla R(q_k,q_k)=O(\lambda_ke^{-\lambda_k}).
\end{equation}
By (\ref{4.2}) and (\ref{4.35}), using Taylor's expansion we have
\begin{align}
\label{4.36}
&\nabla^2\big{(}\log h(p)+8\pi R(p,p)\big{)}(q_{k}-p)-\nabla\psi_k(p)\nonumber\\
=~&\nabla\log h(q_k)-\nabla\psi_k(q_k)+8\pi\nabla R(q_k,q_k)-\big(\nabla\log h(p)+8\pi\nabla R(p,p)\big)\nonumber\\
&+\nabla\psi_k(q_k)-\nabla\psi_k(p)+O(|p-q_k|^2)\nonumber\\
=~&\nabla H_k(q_k)-\nabla H(p)+O\bigr(|p-q_k|^{\gamma}\|\psi_k\|_*\bigr)+O(|p-q_k|^2),\\
=~&O(\lambda_ke^{-\lambda_k})+O\bigr(|p-q_k|^{\gamma}\|\psi_k\|_*\bigr)+O(|p-q_k|^2),
\end{align}
where $\gamma$ depends on $\mathfrak{p}$. In the last step we used $\nabla H(p)=0$.

\medskip

Using the estimates (\ref{4.34})-(\ref{4.36}) and the non-degeneracy of $(p,w)$, we obtain
\begin{equation}
\label{4.37}
\|\psi_k\|_*+|p-q_k|\leq C\biggr(\lambda_ke^{-\lambda_k}+o(1)\|\psi_k\|_*+\|\psi_k\|^2_{*}+|p-q_k|^2\biggr),
\end{equation}
where $C$ is a constant independent of $k$ and $\psi_k.$ Therefore, we have
\begin{equation}
\label{4.38}
\|\psi_k\|_*=O(\lambda_ke^{-\lambda_k}), \quad |p-q_k|=O(\lambda_ke^{-\lambda_k}).
\end{equation}
By the above estimates, recalling now (\ref{4.12}), (\ref{4.20}) and (\ref{4.26}) we deduce
\begin{equation}
\label{4.39}
\lambda_k-\lambda(\rho_{1k})=O(\lambda(\rho_{1k})^{-1}), \quad \tilde{h}_k=h+O(\lambda(\rho_{1k})\,e^{-\lambda(\rho_{1k})}),
\quad |q_{k}-p|=O(\lambda(\rho_{1k})\,e^{-\lambda(\rho_{1k})})
\end{equation}
and
\begin{equation}
\label{4.40}
v_{2k}-w=O(\lambda(\rho_{1k})\,e^{-\lambda(\rho_{1k})}).
\end{equation}
We replace $\tilde{h}_k$ by $h$ in the definition of $v_{q}$ and we denote the new function still by $v_{q}.$
By the second estimate in (\ref{4.38}) we have
\begin{align*}
v_{q_k}-v_{q}=O(\lambda(\rho_{1k})\,e^{-\lambda(\rho_{1k})}).
\end{align*}
We set
\begin{align}
\label{4.41}
v_{q,\lambda,a}=v_{q}-\overline{v}_{q}.
\end{align}
By (\ref{4.32}) and (\ref{4.41}) it follows
\begin{align}
\label{4.42}
v_{1k}-v_{q,\lambda,a}=O(\lambda(\rho_{1k})\,e^{-\lambda(\rho_{1k})}).
\end{align}
Finally, once we get the existence of $v_{q,\lambda,a}$ with the above property, by \cite[Lemma 3.2]{cl1} one can deduce that there exists a unique triplet $(q^*_k,\lambda^*_k,a^*_k)$ that satisfy the condition in the definition \eqref{4.14} of $S_{\rho_{1k}}(p,w)$ and
$\phi^*\in O^{(1)}_{q^*_k,\lambda^*_k}$ such that
\begin{equation}
\label{4.43}
v_{1k}=v_{q^*_k,\lambda^*_k,a^*_k}+\phi_k^*\,.
\end{equation}
Therefore, we conclude
$$(v_{1k},v_{2k})\in S_{\rho_{1k}}(p,w)\times S_{\rho_2}(p,w).$$
\end{proof}

\medskip

\noindent From the latter result we are able to characterize the blow up situation.
\begin{theorem}
\label{th4.1}
Suppose $h_1,h_2$ are two positive $C^{2,\alpha}$ function on $M$ such that
\begin{itemize}
\item[(a)]Any solution $(p,w)$ of (\ref{sy}) is non-degenerate.
\item[(b)] $\Delta\log h_1(p)-\rho_2+8\pi-2K(p)\neq0.$
\end{itemize}
Then, there exist $\varepsilon_0>0$ and $C>0$ such that
for any solution of (\ref{s}) with $\rho_1\in(8\pi-\varepsilon_0,8\pi+\varepsilon_0),\rho_2\notin8\pi\mathbb{N}$, we have the following alternative: either
\begin{itemize}
	\item[(i)] $|v_1|,|v_2|\leq C, \quad\forall x\in M$,
	\item[] or
	\item[(ii)] $(v_1,v_2)\in S_{\rho_1}(p,w)\times S_{\rho_2}(p,w)$ for some solution $(p,w)$ of (\ref{sy}).
\end{itemize}	
\end{theorem}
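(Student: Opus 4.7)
The plan is to argue by contradiction, combining the blow-up analysis from Theorem \ref{th1.3} with the quantitative description obtained in Proposition \ref{le4.2}. Suppose the statement fails; then we can find sequences $\rho_{1k}\to 8\pi$, $\rho_{2k}\to \rho_2\notin 8\pi\mathbb{N}$, and solutions $(v_{1k},v_{2k})$ of \eqref{s} at parameters $(\rho_{1k},\rho_{2k})$ such that neither (i) nor (ii) holds for every $k$. In particular $\|(v_{1k},v_{2k})\|_{L^\infty}\to\infty$, so by Theorem \ref{th1.1} applied along the sequence (or equivalently by a direct blow-up argument), after passing to a subsequence we are in the blow-up regime of Theorem \ref{th1.3}.

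Applying Theorem \ref{th1.3}, I would extract a point $p\in M$ and a function $w\in C^{2,\alpha}(M)$ such that $v_{1k}$ blows up only at $p$ with
\[
\rho_{1k}\frac{h_1 e^{v_{1k}-v_{2k}}}{\int_M h_1 e^{v_{1k}-v_{2k}}}\rightharpoonup 8\pi\delta_p,
\]
$v_{2k}\to w$ in $C^{2,\alpha}(M)$, and $(p,w)$ solves the shadow system \eqref{sy}. By hypothesis (a) this limit $(p,w)$ is a non-degenerate solution of \eqref{sy}, and by hypothesis (b) we have $\Delta\log h_1(p)-\rho_2+8\pi-2K(p)\neq 0$, so the quantity $\lambda(\rho_{1k})$ defined in \eqref{4.12} is well posed for $k$ large.

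Now I would invoke Proposition \ref{le4.2}, whose hypotheses are exactly (a) and (b) applied to $(p,w)$, to conclude that for $k$ large there exist $(q_k^*,\lambda_k^*,a_k^*,\phi_k^*,\psi_k^*)$ meeting the size constraints in \eqref{4.14}--\eqref{4.15} such that
\[
v_{1k}=v_{q_k^*,\lambda_k^*,a_k^*}+\phi_k^*,\qquad v_{2k}=w+\psi_k^*,
\]
i.e.\ $(v_{1k},v_{2k})\in S_{\rho_{1k}}(p,w)\times S_{\rho_2}(p,w)$. This contradicts the failure of (ii) and proves the theorem; the constant $C$ in (i) is then extracted by a standard compactness/contradiction argument from Theorem \ref{th1.1}, using that the complement of the blow-up regime gives a precompact set of solutions as the parameters range over $\rho_{1}\in(8\pi-\varepsilon_0,8\pi+\varepsilon_0)$, $\rho_2\notin 8\pi\mathbb{N}$.

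The only genuinely subtle point is uniformity in $\varepsilon_0$: one must check that the alternative is not merely asymptotic along a sequence but holds uniformly in a fixed neighbourhood of $8\pi$. This is handled exactly by the contradiction argument above, since any putative bad sequence would, by Theorem \ref{th1.3}, select \emph{some} solution $(p,w)$ of the shadow system (pre-compactness of such solutions — which will also be used in Theorem \ref{th1.5} — guarantees we can pass to the limit), and Proposition \ref{le4.2} then forces $(v_{1k},v_{2k})$ into $S_{\rho_{1k}}(p,w)\times S_{\rho_2}(p,w)$. Thus the main obstacle is not technical but conceptual: one must ensure the compatibility of Theorem \ref{th1.3} (which is stated for $\rho_{2k}\equiv\rho_2$) with a varying $\rho_{2k}\to\rho_2$, but the proof of Theorem \ref{th1.3} goes through verbatim since $\rho_{2k}\notin 8\pi\mathbb{N}$ for $k$ large and $v_{2k}$ stays bounded in $C^{2,\alpha}$ by the Jensen inequality argument already used in Section \ref{sec:equiv}.
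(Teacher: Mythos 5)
Your proposal is correct and is essentially the argument the paper intends: the paper presents Theorem \ref{th4.1} immediately after Proposition \ref{le4.2} with no separate proof, treating it as the combination of the contradiction/compactness device with Theorem \ref{th1.3} (to extract the shadow limit $(p,w)$) and Proposition \ref{le4.2} (to place the blowing-up sequence in $S_{\rho_{1}}(p,w)\times S_{\rho_2}(p,w)$), which is exactly what you carry out. Two minor points: the reference to Theorem \ref{th1.1} in your opening is not really what is used (that theorem is an a-priori bound for fixed $\rho_i\notin 8\pi\mathbb{N}$, not along a sequence $\rho_{1k}\to 8\pi$ — what drives the blow-up regime here is the blow-up analysis underlying Theorem \ref{th1.3}), and the final sentence about extracting $C$ by a ``separate'' compactness argument is redundant, since the contradiction already produces $C$ by taking $\varepsilon_0=1/n$, $C=n$ in the negated statement.
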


\medskip

\begin{remark} \label{rem:segno}
As we have pointed out in the Introduction, for a sequence of solutions bubbling around a point $p$, the rate of $|\rho_{1k}-8\pi|$ is related to
$$
l(p)=\Delta\log h_1(p)-\rho_2+8\pi-2K(p).
$$
More precisely, as observed in \cite{cl1} (see also \cite{cl3, lwy}) we get that
$$
	\mbox{sgn}(\rho_{1k}-8\pi) = \mbox{sgn}(l(p)),
$$
see for example \eqref{4.26} (and \eqref{4.39}). We will see how this fact plays a role in the proof of Theorem \ref{th1.6} (see also Theorem \ref{th1.4}).
\end{remark}

\vspace{1cm}

\section{Analysis of the nonlinear operator} \label{sec:operator}
Our final goal is to compute the degree of the following nonlinear operator:
$$\left(\begin{array}{l}v_1\\v_2\end{array}\right)=
(-\Delta)^{-1}\left(\begin{array}{l}
\rho_1\left(\frac{h_1e^{v_1-v_2}}{\int_Mh_1e^{v_1-v_2}}-1\right) \vspace{0.1cm}\\
\rho_2\left(\frac{h_2e^{v_2-v_1}}{\int_Mh_2e^{v_2-v_1}}-1\right)
\end{array}\right).$$
In this section we will analyze the latter operator in the space $S_{\rho_1}(p,w)\times S_{\rho_2}(p,w)$. Set
\begin{align} \label{op}
T(v_1,v_2)=\left(\begin{array}{l}T_1(v_1,v_2)\\T_2(v_1,v_2)\end{array}\right)
=\Delta^{-1}\left(\begin{array}{l}
\rho_1\left(\frac{h_1e^{v_1-v_2}}{\int_Mh_1e^{v_1-v_2}}-1\right) \vspace{0.1cm}\\
\rho_2\left(\frac{h_2e^{v_2-v_1}}{\int_Mh_2e^{v_2-v_1}}-1\right)
\end{array}\right).
\end{align}
We know that the degree can be calculated by considering the contributions of the blowing up solutions as $\rho_1\to8\pi$. We have proved in Proposition \ref{le4.2} that all the blowing up solutions are contained in the set $S_{\rho_1}(p,w)\times S_{\rho_2}(p,w)$, see the definitions \eqref{4.14} and \eqref{4.15}. Furthermore, we will actually prove that such bubbling solutions do exist. The latter result is a byproduct of the degree formula of the operator \eqref{op} (i.e. Theorem \ref{th1.4}), see also the discussion in the Introduction.

\medskip

We shall study the operator \eqref{op} in the space $S_{\rho_1}(p,w)\times S_{\rho_2}(p,w)$. Any $v_1$ in $S_{\rho_1}(p,w)$ is represented by $(q,\lambda,a,\phi)$ while any $v_2$ in $S_{\rho_2}(p,w)$ is represented by $(w,\psi).$ Therefore, the nonlinear operator $v_1+T_1(v_1,v_2)$ can be expressed according to this representation. In this way we will be able to get the leading terms of the latter operator in the set $S_{\rho_1}(p,w)\times S_{\rho_2}(p,w)$. On the other hand, the operator $v_2+T_2(v_1,v_2)$ has a simpler for and it will be studied in the next section. We will see that this will lead to count the degree on a finite-dimensional space (at least for what concerns $v_1+T_1(v_1,v_2)$).

\medskip

We start by analyzing the term $\rho_1h_1e^{v_1-v_2}$. Let $v_1=v_{q,\lambda,a}+\phi\in S_{\rho_1}(p,w)$ and $y=x-q$. Recalling the definitions \eqref{4.3}, \eqref{4.5}, \eqref{4.6}, \eqref{4.8}, \eqref{4.11} and that $t=s-\overline{v}_{q}$,
for $x\in B_{r_0}(q)$ we get
\begin{align*}
v_{q,\lambda,a}(x)+\log\frac{h(x)}{h(q)}=&U+t+H(x)+\eta
+(a-1)(U+s)+O\big(|a-1|(|y|+|\eta|+|\overline{v}_q|)\big).
\end{align*}
It follows that in $B_{r_0}(q)$, by Taylor we have
\begin{align}
\label{5.1}
\rho_1h_1e^{v_1-v_2-\phi+\psi}=\rho_1h(q)\,e^{U+t}\Big[1+(a-1)(U+s)+\eta+H(x)+(a-1)O(|y|)+O(\tilde{\beta}^2)\Big],
\end{align}
where
$$\tilde{\beta}=\lambda|a-1|+|\eta|+|H(x)|+|\overline{v}_q|.$$
Therefore, letting $\varphi=\phi-\psi$ we have in $B_{r_0}(q)$
\begin{align}
\label{5.2}
\rho_1h_1e^{v_1-v_2}=~&(1+\varphi)\rho_1h\,e^{v_{q,\lambda,a}}+(e^{\varphi}-1-\varphi)\rho_1h\,e^{v_{q,\lambda,a}}
\nonumber\\
=~&\rho_1h(q)\,e^{U+t}\Big[1+(a-1)(U+s)+\eta+H(x)+(a-1)O(|y|)+\varphi\Big]+\tilde{E},
\end{align}
where
\begin{equation}
\label{5.3}
\tilde{E}=\bigr(e^{\varphi}-1-\varphi\bigr)\rho_1h\,e^{v_{q,\lambda,a}}+\rho_1h(q)\,e^{U+t}O(\varphi^2+\tilde{\beta}^2).
\end{equation}
Using the latter expression for $\rho_1h_1e^{v_1-v_2}$ we are in position to obtain the following estimate for $\int_M\rho_1h_1e^{v_1-v_2}.$
\medskip

\begin{lemma}
\label{le5.1}
Let $v_1=v_{q,\lambda,a}+\phi\in S_{\rho_1}(p,w)~\mathrm{and}~v_2=w+\psi\in S_{\rho_2}(p,w).$
Then as
$\rho_1\rightarrow 8\pi,\rho_2\notin 8\pi\mathbb{N},$ we have
\begin{align}
\label{5.6}
\int_{M}\rho_1h_1e^{v_1-v_2}=~&8\pi e^{t}(1-\psi(q))
+\frac{16\pi}{\rho_1h(q)}\Delta H(q)\frac{\lambda}{e^{\lambda}}e^t+16\pi\lambda(a-1)e^{t}
+O\bigr(|a-1|e^{\lambda}+1\bigr).
\end{align}
\end{lemma}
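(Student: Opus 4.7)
The strategy is to split $\int_{M}=\int_{B_{r_{0}}(q)}+\int_{M\setminus B_{r_{0}}(q)}$. On the exterior, the function $v_{q,\lambda,a}$ coincides (up to the cutoff) with $a\bigl(8\pi G(\cdot,q)-\overline{v}_{q}\bigr)$, which is uniformly bounded in $\lambda$; combined with the smallness of $\phi,\psi$ from the definitions \eqref{4.14}--\eqref{4.15}, the integrand $\rho_{1}h_{1}e^{v_{1}-v_{2}}$ is $L^{\infty}$-bounded there and the contribution is $O(1)$. On the interior I would plug in the expansion \eqref{5.2} and integrate the summands separately, using the rescaling $z=Ry$ with $R^{2}=\rho_{1}h(q)e^{\lambda}/8$.

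The main computational ingredients are three explicit bubble moments obtained from this rescaling:
\begin{align*}
\int_{B_{r_{0}}(q)}\rho_{1}h(q)e^{U}\,dx &= 8\pi+O(e^{-\lambda}),\\
\int_{B_{r_{0}}(q)}\rho_{1}h(q)e^{U}(U+s)\,dx &= 16\pi\lambda+O(1),\\
\int_{B_{r_{0}}(q)}\rho_{1}h(q)e^{U}|y|^{2}\,dx &= \frac{8\pi\lambda}{R^{2}}+O(R^{-2}),
\end{align*}
together with the elementary identity $\int_{\mathbb{R}^{2}}\frac{\log(1+|z|^{2})}{(1+|z|^{2})^{2}}\,dz=\pi$ used for the second line. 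The logarithmic divergence in the third line, cut off at $|z|=Rr_{0}$, is precisely what produces the $\lambda$ factor in the $\Delta H(q)$ term of the lemma. The first moment multiplied by $e^{t}$ gives the main $8\pi e^{t}$, while the second multiplied by $(a-1)e^{t}$ yields $16\pi\lambda(a-1)e^{t}+O(|a-1|e^{\lambda})$.

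For the $H(x)$ contribution I would Taylor-expand around $q$, noting $H(q)=0$, so that $H(x)=\nabla H(q)\cdot y+\tfrac{1}{2}\,y^{T}\nabla^{2}H(q)\,y+O(|y|^{3})$. The linear piece integrates to zero by the radial symmetry of $e^{U}$; the quadratic piece reduces, via $\int y^{T}Ay\,f(|y|)\,dy=\tfrac{1}{2}\mathrm{tr}(A)\int|y|^{2}f(|y|)\,dy$, to $\tfrac{1}{4}\Delta H(q)\int\rho_{1}h(q)e^{U}|y|^{2}\,dx\cdot e^{t}=\frac{16\pi}{\rho_{1}h(q)}\Delta H(q)\,\lambda e^{-\lambda}\,e^{t}$; the cubic remainder is $O(e^{-\lambda})\cdot e^{t}=O(1)$. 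The same symmetry argument applied to $\psi(x)=\psi(q)+\nabla\psi(q)\cdot y+O(\|\psi\|_{*}|y|^{1+\alpha})$, valid since $\psi\in W^{2,\mathfrak{p}}\hookrightarrow C^{1,\alpha}$, produces $-8\pi\psi(q)e^{t}+o(1)$, accounting for the $-\psi$ piece of $\varphi=\phi-\psi$. The $(a-1)O(|y|)$ piece contributes $O(|a-1|e^{\lambda/2})=o(1)$, and the $\eta$ piece is bounded via Lemma~\ref{le4.1}(i) by $O(e^{-\lambda})\cdot e^{t}=O(1)$.

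The main obstacles are handling the linear $\phi$ contribution and the nonlinear remainder $\tilde{E}$. For the former, $\int\rho_{1}h(q)e^{U+t}\phi\,dx$ is controlled via the orthogonality $\int\nabla\phi\cdot\nabla v_{q}=0$ coming from $\phi\in O^{(1)}_{q,\lambda}$: integrating by parts and using $-\Delta v_{q}=\rho_{1}h(q)e^{U}(1+\eta+J)+O(1)$ inside $B_{r_{0}}(q)$, together with $\|\phi\|_{H^{1}}=O(\lambda e^{-\lambda})$, reduces this integral to $o(1)$. For the latter, $\|\varphi\|_{L^{p}}=o(1)$ for any finite $p$, and $\int\rho_{1}h(q)e^{U+t}(\varphi^{2}+\tilde{\beta}^{2})\,dx=O(1)$ by Hölder combined with the bubble moments above, so that $\int_{M}\tilde{E}\,dx=O(1)$. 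Collecting all contributions yields the stated identity.
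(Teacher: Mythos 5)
The paper does not actually prove Lemma~\ref{le5.1}: the ``proof'' given in Section~\ref{sec:operator} is the single sentence ``by direct computations, we refer the readers to [Chen--Lin, Lemma~4.1] and [Lin--Wei--Yang, Lemma~4.3] for details.'' Your proposal therefore supplies more than the paper does, and it follows exactly the route those citations take: split $M$ into $B_{r_0}(q)$ and its complement, observe the exterior piece is $O(1)$ because $v_{q,\lambda,a}$ and $v_2$ are uniformly bounded there, insert the expansion \eqref{5.2} on the interior, and integrate the summands against the bubble via the rescaling $z=Ry$. Your three moments, the value $\pi$ of $\int_{\mathbb{R}^2}\log(1+|z|^{2})(1+|z|^{2})^{-2}\,dz$, the normalization $H(q)=0$, the symmetry cancellation of the linear pieces $\nabla H(q)\cdot y$ and $\nabla\psi(q)\cdot y$, the orthogonality-based control of the $\phi$ integral (this is precisely Lemma~\ref{le7.1}), the cubic remainder $O(e^{-\lambda})e^{t}=O(1)$, and the H\"older/Moser--Trudinger control of $\tilde E$ are all consistent with the cited computations, and the leading coefficients $8\pi$, $16\pi\lambda$ and $\frac{16\pi}{\rho_1 h(q)}\Delta H(q)\lambda e^{-\lambda}$ come out correctly.

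There is, however, one step you dismiss too quickly: the $\eta$ contribution. You assert it is ``bounded via Lemma~\ref{le4.1}(i) by $O(e^{-\lambda})\cdot e^{t}=O(1)$,'' but Lemma~\ref{le4.1}(i) reads
$\eta(x)=-\frac{8\Delta H(q)}{\rho_1h(q)}e^{-\lambda}\bigl(\log(R|x-q|+2)\bigr)^{2}+O(\lambda e^{-\lambda})$.
The leading term does integrate against $\rho_1h(q)e^{U}$ to $O(e^{-\lambda})$, since $\int(\log(|z|+2))^{2}(1+|z|^{2})^{-2}\,dz$ is finite, exactly as you say. But the pointwise remainder is $O(\lambda e^{-\lambda})$, and since $\int_{B_{r_0}(q)}\rho_1 h(q)e^{U}\,dx=8\pi+O(e^{-\lambda})$, the remainder contributes $O(\lambda e^{-\lambda})$ to $\int\rho_1 h(q)e^{U}\eta$, hence $O(\lambda)$ after multiplication by $e^{t}\sim e^{\lambda}$. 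This is the same order as the $\Delta H(q)$ term you are trying to isolate and is not absorbed by $O(|a-1|e^{\lambda}+1)$. To pin down the coefficient of $\Delta H(q)$ one must therefore use the \emph{structure} of $\eta$ rather than its crude size: e.g.\ integrate equation \eqref{4.7} over $B_{r_0}(q)$ to write $\int\rho_1h(q)e^{U}(\eta+J)=-\oint_{\partial B_{r_0}(q)}\partial_{\nu}\eta$, and pair the resulting boundary flux with the $J$-moment you already computed, or quote the more refined moment estimates for $\eta$ that appear in the cited Chen--Lin construction. As written, your argument leaves the possibility of an undetermined $O(\lambda)\,\Delta H(q)$-dependent correction, which would change the constant $\frac{16\pi}{\rho_1h(q)}$ entering \eqref{5.7} and, downstream, the sign computation $\mathrm{sgn}(\rho_{1k}-8\pi)=\mathrm{sgn}(l(p))$ used in Theorem~\ref{th1.4}.
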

\begin{proof}
The proof of Lemma \ref{le5.1} is by direct computations, we refer the readers to \cite[Lemma 4.1]{cl1} and \cite[Lemma 4.3]{lwy} for details.
\end{proof}
\medskip

\noindent By Lemma \ref{le5.1} and noting that $\lambda=t+O(1)$, see \eqref{4.11}, we get
\begin{align}
\label{5.7}
\frac{\int_M\rho_1h_1e^{v_1-v_2}}{e^t}
=~&8\pi-8\pi\psi(q)+\frac{16\pi}{\rho_1h(q)}\Delta H(q)\lambda \,e^{-\lambda}+16\pi\lambda(a-1)\nonumber\\
~&+O(|a-1|)+O(e^{-\lambda}).
\end{align}
It is not difficult to see that $\frac{e^{t}}{\int_Mh_1e^{v_1-v_2}}-1$ should be small. Indeed, we have
\begin{align}
\label{5.8}
\frac{e^{t}}{\int_Mh_1e^{v_1-v_2}}-1=~&\frac{1}{e^{-t}\int_M\rho_1h_1e^{v_1-v_2}}
\left(\rho_1-\frac{\int_M\rho_1h_1e^{v_1-v_2}}{e^{t}}\right)
\nonumber\\
=~&\theta+O(|a-1|)+O(e^{-\lambda}),
\end{align}
where $\theta$ is defined by
\begin{align}
\label{5.9}
\theta=\frac{1}{8\pi}\Big[(\rho_1-8\pi)-\frac{16\pi}{\rho_1h(q)}\Delta H(q)\lambda \,e^{-\lambda}+8\pi\psi(q)
-16\pi\lambda (a-1)\Big].
\end{align}
Let
\begin{align}
\label{5.10}
\beta=\left|\frac{e^{t}}{\int_Mh_1e^{v_1-v_2}}-1\right|+\tilde{\beta},
\end{align}
and
\begin{align}
\label{5.11}
E=(e^{\varphi}-1-\varphi)\frac{\rho_1h_1e^{v_1-v_2}}{\int_Mh_1e^{v_1-v_2}}
+\rho_1h(q)\,e^{U}\big(O(\varphi^2)+O(\beta^2)\big).
\end{align}
Consider now $\frac{e^{t}}{\int_Mh_1e^{v_1-v_2}}=1+\left(\frac{e^{t}}{\int_Mh_1e^{v_1-v_2}}-1\right)$. Then, in $B_{r_0}(q)$ we have by (\ref{5.2})
\begin{align}
\label{5.12}
\frac{\rho_1h_1e^{v_1-v_2}}{\int_Mh_1e^{v_1-v_2}}=~&
(1+\varphi)\frac{\rho_1he^{v_{q,\lambda,a}}}{\int_Mh_1e^{v_1-v_2}}
+(e^{\varphi}-1-\varphi)\frac{\rho_1he^{v_{q,\lambda,a}}}{\int_Mh_1e^{v_1-v_2}}\nonumber\\
=~&\rho_1h(q)\,e^{U}\left[1+\left(\frac{e^{t}}{\int_Mh_1e^{v_1-v_2}}-1\right)
+(a-1)(U+s)\right.\nonumber\\
&+(a-1)O(|y|)+\eta+H+\varphi+O(\beta^2)\biggr]+E.
\end{align}
Thus, in $B_{r_0}(q)$ we get
\begin{align}
\label{5.13}
\Delta\big(v_1+T_1(v_1,v_2)\big)=~&\Delta v_1+\frac{\rho_1h_1e^{v_1-v_2}}{\int_Mh_1e^{v_1-v_2}}-\rho_1\nonumber\\
=~&a(\Delta U+\Delta\eta)+\Delta\phi+8\pi a
+\frac{\rho_1h_1e^{v_1-v_2}}{\int_Mh_1e^{v_1-v_2}}-\rho_1\nonumber\\
=~&\Delta\phi-a\rho_1h(q)\,e^{U}\Big[1+\eta+H-\nabla_yH\cdot y\Big]+8\pi-\rho_1+8\pi(a-1)+\frac{\rho_1h_1e^{v_1-v_2}}{\int_Mh_1e^{v_1-v_2}}\nonumber\\
=~&\Delta\phi+(8\pi-\rho_1)+8\pi(a-1)+\rho_1h(q)\,e^{U}\biggr[(a-1)(U+s-1)+(a-1)O(|y|)+\nonumber\\
&+y\cdot\nabla H+\left(\frac{e^{t}}{\int_Mh_1e^{v_1-v_2}}-1\right)+\varphi\biggr]+E.
\end{align}

\

\noindent Since $v_{q}-\overline{v}_{q}-8\pi G(x,q)$ is small in $B_{2r_0}(q)\setminus B_{r_0}(q)$, see \cite[Lemma 2.2]{cl1}, we just write
\begin{align}
\label{5.14}
\Delta\big(v_1+T_1(v_1,v_2)\big)
=~&\Delta\phi+a\Delta(v_{q}-8\pi G(x,q))+8\pi-\rho_1+8\pi(a-1)\nonumber\\
&+\frac{\rho_1h}{\int_Mh_1e^{v_1-v_2}}
e^{a(v_{q}-\overline{v}_{q}-8\pi G(x,q))+8\pi a G(x,q)+\varphi}.
\end{align}

\noindent On $M\setminus B_{2r_0}(q)$ we have instead
\begin{align}
\label{5.15}
\Delta\big(v_1+T_1(v_1,v_2)\big)=~&\Delta\phi+8\pi-\rho_1+8\pi (a-1)+\frac{\rho_1h}{\int_{M}h_1e^{v_1-v_2}}e^{8\pi a G(x,q)+\varphi-a\overline{v}_q}.
\end{align}

\medskip

From the representation given in (\ref{5.13})-(\ref{5.15}) it is possible to get the leading terms of the operator $v_1+T_1(v_1,v_2)$ in the set $S_{\rho_1}(p,w)\times S_{\rho_2}(p,w)$. Recall that we are considering the decomposition
\begin{equation} \label{dec}
	\mathring{H}^1=O_{q,\lambda}^{(1)}\bigoplus~\biggr\{\mathrm{linear~subspace~spanned~by}~v_{q},
\partial_{\lambda}v_{q}~\mathrm{and}~\partial_{q}v_{q}\biggr\}.
\end{equation}
\begin{proposition}
\label{le5.2}
Let $v_1=v_{q,\lambda,a}+\phi\in S_{\rho_1}(p,w),$ $v_2=w+\psi\in S_{\rho_2}(p,w)$. Then as $\rho_1\rightarrow 8\pi,$ $\rho_2 \notin 8\pi\N$, we have
\begin{enumerate}
      \item
      \begin{align}
      \label{5.16}
      \l\nabla(v_1+T_1(v_1,v_2)),\nabla\phi_1\r=\mathfrak{B}(\phi,\phi_1)
      +O(\lambda \,e^{-\lambda})\|\phi_1\|_{H^1_0(M)},
      \end{align}
      where
      $$\mathfrak{B}(\phi,\phi_1):=\int_{M}\nabla\phi\cdot\nabla\phi_1
      -\int_{B_{r_0}(q)}\rho_1h(q)\,e^{U}\phi\phi_1,$$
      is a positive symmetric, bilinear form satisfying $\mathfrak{B}(\phi,\phi)\geq c_0\|\phi\|_{H^1(M)}^2$ for some constant
      $c_0>0.$
      \item
      \begin{align}
      \label{5.17}
      \l\nabla(v_1+T_1(v_1,v_2)),\nabla\partial_{q}v_{q}\r =& -8\pi\nabla H(q)+8\pi\nabla\psi(q)\nonumber\\
       &+O\left(\lambda|a-1|+\Big|\frac{e^t}{\int_Mh_1e^{v_1-v_2}}-1-\psi(q)\Big|
      +\lambda e^{-\lambda}\right),
      \end{align}
      \item
      \begin{align}
      \label{5.18}
      \l\nabla(v_1+T_1(v_1,v_2)),\nabla\partial_{\lambda}v_{q}\r = &-16\pi(a-1)\Big(\lambda-1+\log\frac{\rho_1h(q)}{8}+4\pi R(q,q)\Big)-8\pi(\theta-\psi(q))\nonumber\\
        &+ O\left(|a-1|+\lambda^2e^{-\frac32\lambda}\right)
      \end{align}
      \item
      \begin{align}
      \label{5.19}
      \l\nabla(v_1+T_1(v_1,v_2)),\nabla v_{q}\r
      =~&\Big(2\lambda-2+8\pi R(q,q)+2\log\frac{\rho_1h(q)}{8}\Big)\nonumber\\
        &\times\l\nabla(v_1+T_1(v_1,v_2)),\nabla\partial_{\lambda}v_{q}\r+16\pi(a-1)\lambda\nonumber\\ &+O(1)\|\phi\|_{H^1(M)}+O(\lambda \,e^{-\lambda}),
      \end{align}
\end{enumerate}
\end{proposition}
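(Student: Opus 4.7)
The strategy is to integrate by parts, plug in the explicit formulas for $\Delta(v_1+T_1(v_1,v_2))$ derived in \eqref{5.13}--\eqref{5.15}, and split each integral into the contributions from $B_{r_0}(q)$, from the transition annulus $B_{2r_0}(q)\setminus B_{r_0}(q)$, and from $M\setminus B_{2r_0}(q)$. Outside $B_{r_0}(q)$ the bubble factor $e^U$ is of order $e^{-\lambda}$, so those contributions will be easily absorbed into the error terms.

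For part (1), with $\phi_1\in O^{(1)}_{q,\lambda}$, integration by parts gives $\langle\nabla(v_1+T_1),\nabla\phi_1\rangle=-\int_M\Delta(v_1+T_1)\,\phi_1$. Plugging in \eqref{5.13}, the pairings of $\Delta\phi$ and of $\rho_1 h(q)e^U\phi$ (the latter coming from $\rho_1 h(q)e^U\varphi=\rho_1 h(q)e^U(\phi-\psi)$) reassemble into the bilinear form $\mathfrak{B}(\phi,\phi_1)$. The remaining bulk contributions inside $B_{r_0}(q)$ are
\[
\rho_1 h(q) e^U\Bigl[(a-1)(U+s-1)+(a-1)\,O(|y|)+y\cdot\nabla H(q)+\Bigl(\tfrac{e^t}{\int_M h_1 e^{v_1-v_2}}-1\Bigr)-\psi\Bigr].
\]
The crucial observation is that $\rho_1 h(q)e^U(U+s-1)$, $\rho_1 h(q)e^U y$, and $\rho_1 h(q)e^U$ are, to leading order, equal to $-\Delta(\partial_\lambda v_q)$, $-\Delta(\partial_q v_q)$, and a multiple of $-\Delta v_q$ respectively, because $\partial_\lambda U$, $\partial_q U$, and $U$ lie in (or close to) the kernel of $-\Delta-\rho_1 h(q)e^U$ on $\mathbb{R}^2$. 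Hence the orthogonality conditions defining $O^{(1)}_{q,\lambda}$ kill these terms to leading order; the pairing with $\psi$ is controlled by $\|\psi\|_*=O(\lambda e^{-\lambda})$ from \eqref{4.15}. What survives is exactly $\mathfrak{B}(\phi,\phi_1)$ up to the claimed error. The positivity $\mathfrak{B}(\phi,\phi)\geq c_0\|\phi\|_{H^1(M)}^2$ follows from the classical non-degeneracy of the standard bubble: the $L^2$-kernel of $-\Delta-\rho_1 h(q)e^U$ on $\mathbb{R}^2$ is spanned precisely by $\partial_\lambda U$ and $\partial_q U$, the directions excluded by $O^{(1)}_{q,\lambda}$.

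For parts (2)--(4), I substitute the explicit test functions $\partial_q v_q$, $\partial_\lambda v_q$, and $v_q$, respectively. For (2), writing $\partial_q v_q\approx\partial_q U+8\pi\partial_q\bigl(R(x,q)-R(q,q)\bigr)+\partial_q\eta$ inside $B_{r_0}(q)$, the leading bubble integral $\int_{B_{r_0}(q)}\rho_1 h(q)e^U(y\cdot\nabla H)\,\partial_q U$ evaluates, by standard identities for the standard bubble, to a multiple of $\nabla H(q)$ and produces the $-8\pi\nabla H(q)$ term in \eqref{5.17}; the $\psi$-contribution in $\varphi=\phi-\psi$ is Taylor expanded as $\psi(q)+\nabla\psi(q)\cdot y+O(|y|^2)$, and the odd moment against $\partial_q U$ gives $+8\pi\nabla\psi(q)$; the residual pieces involving $a-1$ and $\bigl(\tfrac{e^t}{\int}-1-\psi(q)\bigr)$ are collected in the error. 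For (3), the decisive integral is $\int_{B_{r_0}(q)}\rho_1 h(q)e^U(U+s-1)\,\partial_\lambda U$, which, upon using $s=\lambda+2\log(\rho_1 h(q)/8)+8\pi R(q,q)+O(\lambda^2 e^{-\lambda})$, produces the factor $\lambda-1+\log(\rho_1 h(q)/8)+4\pi R(q,q)$, while the constant piece $\bigl(\tfrac{e^t}{\int}-1\bigr)-\psi(q)$, paired with the cutoff-corrected integral of $\rho_1 h(q)e^U\partial_\lambda v_q$, contributes $-8\pi(\theta-\psi(q))$ once the definition \eqref{5.9} of $\theta$ is invoked. For (4), I use the algebraic decomposition
$v_q=\bigl(2\lambda-2+8\pi R(q,q)+2\log\tfrac{\rho_1 h(q)}{8}\bigr)\partial_\lambda v_q+r_q$,
where $r_q$ is controlled in $H^1$; combining this with \eqref{5.18} yields the main term of \eqref{5.19}, and the residual pairing produces the $16\pi(a-1)\lambda$ contribution.

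The main technical obstacle will be the transition annulus $B_{2r_0}(q)\setminus B_{r_0}(q)$, where the cutoff $\sigma$ is non-constant and $v_q$ interpolates between the bubble profile and $8\pi G(x,q)$; here we shall invoke the fine pointwise bounds on $\eta$ from Lemma \ref{le4.1} together with the estimate on $v_q-\overline{v}_q-8\pi G(x,q)$ from \cite[Lemma 2.2]{cl1}. A secondary point is the explicit evaluation of the bubble integrals with the correct numerical constants, which follows the standard identities for $U$ already exploited in \cite[Section 4]{cl1} and \cite[Section 4]{lwy}. The novelty compared with the scalar case is the presence of the $\psi$-terms arising from $\varphi=\phi-\psi$; these are controlled uniformly by the bound $\|\psi\|_*=O(\lambda e^{-\lambda})$ built into the definition \eqref{4.15} of $S_{\rho_2}(p,w)$.
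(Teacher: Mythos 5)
Your high-level strategy -- integrate by parts, plug in the decomposition \eqref{5.13}--\eqref{5.15}, split into $B_{r_0}(q)$, the annulus, and the exterior, and then evaluate the bubble integrals with the standard identities -- is the same as the paper's Appendix, and the intended error bookkeeping for parts (2) and (3) is sound. However, two of the mechanisms you invoke to justify the estimates are incorrect, and they would derail an actual execution of the argument.

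\emph{First}, your ``crucial observation'' in part~(1), that $\rho_1 h(q)e^U(U+s-1)\approx -\Delta(\partial_\lambda v_q)$, $\rho_1 h(q)e^U\,y\approx -\Delta(\partial_q v_q)$, and $\rho_1 h(q)e^U\approx$ a multiple of $-\Delta v_q$, so that the orthogonality conditions defining $O^{(1)}_{q,\lambda}$ kill the bulk terms, is false. Differentiating $\Delta U = -\rho_1 h(q)e^U$ gives $-\Delta\partial_\lambda v_q = \rho_1 h(q)e^U\,\partial_\lambda U + o(1)$, and $\partial_\lambda U$ differs from $U+s-1$ by $O(\lambda)$ near the blow-up center (at $x=q$ one has $\partial_\lambda U=1$ while $U+s-1\approx 2\lambda$). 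Likewise $-\Delta\partial_q v_q\sim \rho_1 h(q)e^U\nabla_x U$, which is not a fixed multiple of $\rho_1 h(q)e^U\,y$. What the paper actually uses (Appendix, Lemma~\ref{le7.1}, cited from \cite{cl1}) is the orthogonality estimate $\int_{B_{r_0}(q)}e^U\phi_1 = O(\lambda^2 e^{-\lambda})\|\phi_1\|_{H^1}$ for $\phi_1\in O^{(1)}_{q,\lambda}$, combined with the smallness of the prefactors $|a-1|$, $|\nabla H(q)|=O(\lambda e^{-\lambda})$, $|\tfrac{e^t}{\int}-1|$, and $\|\psi\|_*$, to push the remaining bulk contributions into the $O(\lambda e^{-\lambda})\|\phi_1\|$ error. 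The terms are not killed by orthogonality against $\partial_\lambda v_q$ or $\partial_q v_q$; they are small because their coefficients are small.

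\emph{Second}, the claimed algebraic decomposition for part~(4), $v_q=\bigl(2\lambda-2+8\pi R(q,q)+2\log\tfrac{\rho_1 h(q)}{8}\bigr)\partial_\lambda v_q+r_q$ with $r_q$ ``controlled in $H^1$,'' is false. In the annular region $e^{-\lambda/2}\ll|x-q|<r_0$ one has $\partial_\lambda v_q\to 0$ while $v_q-\overline{v}_q\approx 8\pi G(\cdot,q)$, so $|\nabla r_q|\sim 4/|x-q|$ there and $\|\nabla r_q\|_{L^2}^2\sim 16\pi\lambda$ -- the same order as $\|v_q-\overline{v}_q\|_{H^1}^2$ itself. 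The remainder is not small in $H^1$. The paper does not use such a decomposition; instead it computes the pairings with $v_q-\overline{v}_q$ directly via \eqref{7.28}--\eqref{7.40} and observes a posteriori that the result factors through the pairing with $\partial_\lambda v_q$, with the residual $16\pi(a-1)\lambda$ emerging from the explicit difference between \eqref{7.32} and $8\pi c^2$. Your route could in principle be repaired -- the exact orthogonality $\int\nabla\phi\cdot\nabla r_q=0$ holds because $\phi\in O^{(1)}_{q,\lambda}$, and the bubble integrals of $r_q$ against $e^U(U+s-1)$, $e^U$, etc.\ can be computed -- but then one is doing the same integral evaluations as the paper, so the stated shortcut buys nothing and the justification given for it is wrong.
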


\medskip

\noindent We can prove Proposition \ref{le5.2} by straightforward computations following the argument in \cite{cl1, lwy} and the sketch of the process is given in the Appendix.

\medskip

We point out that being $\mathfrak{B}(\phi,\phi_1)$ positive definite will be crucially used in the degree analysis. Roughly speaking, we can deduce that the part concerning $\phi$ does not affect the sign of the total degree.

\vspace{1cm}

\section{Proof of the main Theorems \ref{th1.4}-\ref{th1.6}} \label{sec:proof}
In this section we shall prove the main Theorems \ref{th1.4}-\ref{th1.6} and Theorem \ref{bubb}. We are concerned with the topological degree of the operators $v_i+T_i(v_1,v_2)$, $i=1,2$. The strategy will be the following: we just note that due to the decomposition \eqref{dec}, $v_1=v_{q,\lambda,a}+\phi,~v_2=w+\psi$ is a solution of $v_1+T_1(v_1,v_2)=0,$ if and only if all the left-hand sides of
(\ref{5.16})-(\ref{5.19}) vanish.

In order to solve the system (\ref{5.16})-(\ref{5.19}) and $v_2+T_2(v_1,v_2)=0$, the first step is to deform the operators $v_i+T_i(v_1,v_2)$, $i=1,2$, to simpler operators $v_i+T_i^0(v_1,v_2)$. Recall the definition of $\mathfrak{B}(\phi,\phi_1)$ in Proposition \ref{le5.2}. We define the operators $I+T_i^\mathfrak{t},\, \mathfrak{t}\in[0,1],~i=1,2$, through the following relations:
\begin{align}
\label{6.1}
\l\nabla(v_1+T_1^\mathfrak{t}(v_1,v_2)),\nabla\phi_1\r=\mathfrak{t}\l\nabla(v_1+T_1(v_1,v_2)),\nabla\phi_1\r+(1-\mathfrak{t})\mathfrak{B}(\phi,\phi_1) \qquad \mathrm{for}~\phi_1\in O_{q,\lambda}^{(1)},
\end{align}
\begin{align}
\label{6.2}
\l\nabla(v_1+T_1^\mathfrak{t}(v_1,v_2)),\nabla\partial_{q}v_{q}\r=\mathfrak{t}\l\nabla(v_1+T_1(v_1,v_2)),\nabla\partial_{q}v_{q}\r+(1-\mathfrak{t})\big(-8\pi\nabla H(q)+8\pi\nabla\psi(q)\big),
\end{align}
\begin{align}
\label{6.3}
\l\nabla(v_1+T_1^\mathfrak{t}(v_1,v_2)),\nabla\partial_{\lambda}v_{q}\r=
\mathfrak{t}\l\nabla(v_1+T_1(v_1,v_2)),\nabla\partial_{\lambda}v_{q}\r-8\pi(1-\mathfrak{t})\Big[2(a-1)\lambda+(\theta-\psi(q))\Big],
\end{align}
\begin{align}
\label{6.4}
\l\nabla(v_1+T_1^\mathfrak{t}(v_1,v_2)),\nabla v_{q}\r
=~&\mathfrak{t}\Big{[}\big(2\lambda+O(1)\big)\l\nabla(v_1+T_1^\mathfrak{t}(v_1,v_2)),\nabla\partial_{\lambda}v_{q}\r\nonumber\\
&+O(1)\|\phi\|_{H^1}+O(\lambda e^{-\lambda})\Big]+16\pi(a-1)\lambda,
\end{align}
while for the second component
\begin{align}
\label{6.5}
v_2+T_2^\mathfrak{t}(v_1,v_2)=\mathfrak{t}(v_2+T_2(v_1,v_2))+(1-\mathfrak{t})\left(w+\psi-\rho_2(-\Delta)^{-1}\left(\frac{h_2e^{w+\psi-8\pi G(x,q)}}
      {\int_Mh_2e^{w+\psi-8\pi G(x,q)}}-1\right)\right),
\end{align}
where the coefficients $O(1)$ in \eqref{6.4} are those terms in (\ref{5.19}) so that $T_1^1(v_1,v_2)=T_1(v_1,v_2)$. We clearly have
$$v_i+T_i(v_1,v_2)=v_i+T_i^1(v_1,v_2), \qquad i=1,2.$$
During the deformation from $T_i^1$ to $T_i^0,~i=1,2$, we have the following result, which will be then used in the analysis of the associated degree.

\begin{lemma}
\label{le6.1}
Assume $\rho_1-8\pi\neq0$, $\rho_2\notin 8\pi\mathbb{N}$ and let $(p,w)$ be a non-degenerate solution of (\ref{sy}). Then, there exists $\varepsilon_1>0$
such that $\bigr(v_1+T_1^\mathfrak{t}(v_1,v_2),v_2+T_2^\mathfrak{t}(v_1,v_2)\bigr)\neq0$ for $(v_1,v_2)\in \partial\big(S_{\rho_1}(p,w)\times S_{\rho_2}(p,w)\big)$ and $\mathfrak{t}\in[0,1]$ if $|\rho_1-8\pi|<\varepsilon_1$ and $\rho_2$ is fixed.
\end{lemma}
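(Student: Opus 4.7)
The strategy is a contradiction argument. Suppose the claim fails; then one can extract sequences $\rho_{1k}\to 8\pi$ with $\rho_{1k}\neq 8\pi$, $\mathfrak{t}_k\in[0,1]$, and $(v_{1k},v_{2k})\in\partial(S_{\rho_{1k}}(p,w)\times S_{\rho_2}(p,w))$ solving $v_{ik}+T_i^{\mathfrak{t}_k}(v_{1k},v_{2k})=0$ for $i=1,2$. Writing $v_{1k}=v_{q_k,\lambda_k,a_k}+\phi_k$ with $\phi_k\in O_{q_k,\lambda_k}^{(1)}$ and $v_{2k}=w+\psi_k$, the plan is to prove that each of the five defining quantities $|q_k-p|$, $|\lambda_k-\lambda(\rho_{1k})|$, $|a_k-1|$, $\|\phi_k\|_{H^1}$, $\|\psi_k\|_*$ is bounded above by a constant independent of $c_1$ times the corresponding radius appearing in \eqref{4.14}--\eqref{4.15}. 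Choosing $c_1$ larger than all such universal constants then forbids $(v_{1k},v_{2k})$ from lying on the boundary, producing the desired contradiction.

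Using the decomposition \eqref{dec}, the equation $v_1+T_1^{\mathfrak{t}_k}(v_{1k},v_{2k})=0$ becomes the four scalar identities obtained by testing against $\phi_1\in O_{q_k,\lambda_k}^{(1)}$ and against $v_{q_k}$, $\partial_\lambda v_{q_k}$, $\partial_q v_{q_k}$. By design of the homotopy \eqref{6.1}--\eqref{6.4}, the leading parts identified in Proposition~\ref{le5.2} persist for every $\mathfrak{t}_k\in[0,1]$ while $\mathfrak{t}_k$ multiplies only the remainders. Testing \eqref{6.1} against $\phi_1=\phi_k$ and using $\mathfrak{B}(\phi_k,\phi_k)\geq c_0\|\phi_k\|_{H^1}^2$ gives at once $\|\phi_k\|_{H^1}\leq C\lambda(\rho_{1k})e^{-\lambda(\rho_{1k})}$ with $C$ independent of $c_1$. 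Next, combining $v_2+T_2^{\mathfrak{t}_k}(v_{1k},v_{2k})=0$ with the projection \eqref{6.2} and Taylor-expanding about $(p,w)$, the principal parts reconstruct exactly the linearisation of the shadow system \eqref{sy} at $(p,w)$ evaluated on the pair $(q_k-p,\psi_k)$; the relevant cancellations are enabled by the identity $\nabla H(p)=0$ coming from the second equation of \eqref{sy}. The non-degeneracy hypothesis on $(p,w)$ inverts this linearisation and yields $|q_k-p|+\|\psi_k\|_*\leq C\lambda(\rho_{1k})e^{-\lambda(\rho_{1k})}$.

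To close the argument, I insert \eqref{5.9} into the vanishing of \eqref{6.3}: the $(a_k-1)\lambda_k$-terms cancel and one is left, to leading order, with
\[
\rho_{1k}-8\pi = \frac{2\Delta H_k(q_k)}{h(q_k)}\lambda_k e^{-\lambda_k}+\text{lower-order terms},
\]
which, compared with the definition \eqref{4.12} of $\lambda(\rho_{1k})$ and the condition $l(p)\neq 0$ guaranteed by Remark~\ref{rem:h}, controls $|\lambda_k-\lambda(\rho_{1k})|$. Plugging all already-obtained bounds into the projection onto $v_{q_k}$ coming from \eqref{6.4} yields, in turn, the required estimate on $|a_k-1|$. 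The main technical obstacle is the coupled step between $v_2+T_2^{\mathfrak{t}_k}=0$ and \eqref{6.2}: among error terms of formally comparable sizes one must isolate the exact block that reproduces the linearised shadow operator so that non-degeneracy can be invoked. A careful bootstrap is needed, first establishing rough $o(1)$-smallness of every quantity and then sharpening each estimate, mirroring the reasoning in \eqref{4.37}--\eqref{4.40}. One must also verify that $\mathfrak{t}_k$ uniformly multiplies only higher-order quantities so that all estimates are stable across the full homotopy $\mathfrak{t}\in[0,1]$; this stability is precisely what motivates the specific form of the deformation in \eqref{6.1}--\eqref{6.5}.
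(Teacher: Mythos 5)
Your proposal is correct and follows exactly the strategy the paper indicates: use the projection identities from Proposition~\ref{le5.2} (encoded in the homotopy \eqref{6.1}--\eqref{6.4}), the coercivity of $\mathfrak{B}$ to bound $\|\phi_k\|$, and the non-degeneracy of $(p,w)$ applied to the block formed by \eqref{6.2} together with $v_2+T_2^{\mathfrak{t}}=0$ to bound $|q_k-p|$ and $\|\psi_k\|_*$, with the remaining projections fixing $\lambda_k$ and $a_k$. The paper's own ``proof'' is only a pointer to \cite{cl1,cl3,lwy}, so your proposal essentially supplies the details the authors chose to omit, in the same spirit as those references.
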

\begin{proof}
One should take $(v_1,v_2)\in \bar S_{\rho_1}(p,w)\times \bar S_{\rho_2}(p,w)$, where $\bar S_{\rho_i}(p,w)$ stands for the closure of $S_{\rho_i}(p,w)$, $i=1,2$, such that
$$
\bigr(v_1+T_1^\mathfrak{t}(v_1,v_2),v_2+T_2^\mathfrak{t}(v_1,v_2)\bigr)=0, \qquad \mbox{for some } \mathfrak{t}\in[0,1].
$$
The goal is then to prove that $(v_1,v_2)\notin \partial\big(S_{\rho_1}(p,w)\times S_{\rho_2}(p,w)\big)$. The strategy is to use the estimates in the decomposition (\ref{5.16})-(\ref{5.19}), while for what concerns the estimates for the second component $v_2$ one should exploit also the non-degeneracy of $(p,w)$ to (\ref{sy}).

\medskip

Anyway, the proof of Lemma \ref{le6.1} is quite standard now and we will skip the details: similar arguments can be found in \cite[Lemma~4.3]{cl1}, \cite[Lemma 4.1]{cl3} and \cite[Lemma 5.1]{lwy}.
\end{proof}

\medskip

\noindent The goal is to compute the following degree:
\begin{align} \label{grado}
d_T(p,w)=\mathrm{deg}\Big(\big(v_1+T_1(v_1,v_2),v_2+T_2(v_1,v_2)\big);S_{\rho_1}(p,w)\times S_{\rho_2}(p,w),0\Big).
\end{align}
As we have pointed out, we want to reduce this computation to a finite-dimensional problem (at least for $v_1+T_1(v_1,v_2)$). In order to do this we set
$$S_1^*(p,w)=\Big\{(q,\lambda,a):v_{q,\lambda,a}+\phi\in S_{\rho_1}(p,w), \phi\in O_{q,\lambda}^{(1)}\Big\}$$
and define the map $\Phi_{p}=(\Phi_{p,1},\Phi_{p,2},\Phi_{p,3},\Phi_{p,4})$ by
\begin{align*}
&\Phi_{p,1}=\frac{1}{8\pi}\l\nabla(v_1+T^0_1(v_1,v_2)),\nabla\partial_{q}v_{q}\r,\\
&\Phi_{p,2}=\l\nabla(v_1+T^0_1(v_1,v_2)),\nabla\partial_{\lambda}v_{q}\r,\\
&\Phi_{p,3}=\l\nabla(v_1+T^0_1(v_1,v_2)),\nabla v_{q}\r,\\
&\Phi_{p,4}=v_2+T^0_2(v_1,v_2).
\end{align*}
We notice that due to the decomposition \eqref{dec} and by the fact that $\mathfrak{B}(\phi,\phi_1)$ is positive definite, the projection in the $\phi$ direction does not change the sign of the total degree. Moreover, by Lemma \ref{le6.1} and the invariance of the degree we have
\begin{align}
\label{6.6}
\mathrm{deg}\Big(\big(v_1+&T_1(v_1,v_2),v_2+T_2(v_1,v_2)\big);S_{\rho_1}(p,w)\times S_{\rho_2}(p,w),0\Big)=\mathrm{deg}\Big(\Phi_{p};S_1^*(p,w)\times S_{\rho_2}(p,w),0\Big).
\end{align}
We are now able to compute the right-hand side of (\ref{6.6}) and prove Theorem \ref{th1.4}.\\

\noindent {\em Proof of Theorem \ref{th1.4}.}
We have here to compute the degree $d_T(p,w)$ in \eqref{grado} and prove that is relates to the degree $d_S(p,w)$ of the shadow system \eqref{sy} contributed by the Morse index of $(p,w)$. To do this we compute the right-hand side of (\ref{6.6}).

We start by noting that
\begin{align}
\label{6.7}
\Phi_{p,2}=-\left(\rho_1-8\pi-16\pi\frac{\Delta H(q)}{\rho_1h(q)}\lambda \,e^{-\lambda}\right)
\end{align}
and
\begin{align}
\label{6.8}
\frac{\partial\Phi_{p,1}}{\partial\lambda}=\frac{\partial\Phi_{p,1}}{\partial a}
=\frac{\partial\Phi_{p,2}}{\partial a}=\frac{\partial\Phi_{p,3}}{\partial \psi}
=\frac{\partial\Phi_{p,3}}{\partial q}=\frac{\partial\Phi_{p,4}}{\partial a}
=\frac{\partial\Phi_{p,4}}{\partial\lambda}=0,
\end{align}
It is easy to see that $\Phi_{p,1}=0$, $\Phi_{p,3}=0$ and $\Phi_{p,4}=0$ if and only if
\begin{align}
\label{6.9}
q=p, \quad a=1, \quad \psi=0,
\end{align}
and ${\Phi}_{p,2}=0$ if and only if
\begin{align}
\label{6.10}
\rho_1-8\pi=\frac{16\pi}{\rho_1h(q)}\Delta H(q)\lambda \,e^{-\lambda}.
\end{align}
One can show that if $|\rho_1-8\pi|$ is taken sufficiently small, equation (\ref{6.10}) possesses a unique solution $\lambda=\lambda_1(\rho_1)$. Hence, $(p,\lambda_1(\rho_1),a,0)$ is the only solution of $\Phi_{p}=0,$ where $ a=1.$ To obtain the degree of $\Phi_{p}$
at $(p,\lambda_1(\rho_1),a,0)$ we have to get the number of negative eigenvalues of the following matrix:
\begin{align*}
\mathcal{M}=\left[\begin{array}{llll}
\frac{\partial\Phi_{p,1}}{\partial q}&\frac{\partial\Phi_{p,1}}{\partial \lambda}
&\frac{\partial\Phi_{p,1}}{\partial a}
&\frac{\partial\Phi_{p,1}}{\partial \psi}\vspace{0.2cm}\\
\frac{\partial\Phi_{p,2}}{\partial q}&\frac{\partial\Phi_{p,2}}{\partial \lambda}
&\frac{\partial\Phi_{p,2}}{\partial a}
&\frac{\partial\Phi_{p,2}}{\partial \psi}\vspace{0.2cm}\\
\frac{\partial\Phi_{p,3}}{\partial q}&\frac{\partial\Phi_{p,3}}{\partial \lambda}
&\frac{\partial\Phi_{p,3}}{\partial a}
&\frac{\partial\Phi_{p,3}}{\partial \psi}\vspace{0.2cm}\\
\frac{\partial\Phi_{p,4}}{\partial q}&\frac{\partial\Phi_{p,4}}{\partial \lambda}
&\frac{\partial\Phi_{p,4}}{\partial a}
&\frac{\partial\Phi_{p,4}}{\partial \psi}\vspace{0.2cm}\\
\end{array}\right].
\end{align*}
We point out that $\mu_{\mathcal{M}}$ is an eigenvalue of $\mathcal{M}$ if there exist $\nu\in\mathbb{R}^{2},$
$\lambda,a\in\mathbb{R}$ and $\Psi$ such that
\begin{align*}
\mathcal{M}\left[\begin{array}{l}\nu\\a\\ \lambda\\ \Psi\end{array}\right]=
\mu_{\mathcal{M}}\left[\begin{array}{l}\quad\quad \nu\\ \quad\quad a\\ \quad\quad\lambda\\ (-\Delta)^{-1}\Psi\end{array}\right].
\end{align*}
We set $N(T)$ as the number of the negative eigenvalues (with multiplicity) of the matrix $T$. Let
\begin{align*}
\mathcal{M}_1=\left[\begin{array}{ll}\frac{\partial\Phi_{p,1}}{\partial p}&\frac{\partial\Phi_{p,1}}{\partial \psi}\vspace{0.2cm}\\
\frac{\partial\Phi_{p,4}}{\partial p}&\frac{\partial\Phi_{p,4}}{\partial \psi}\\
\end{array}\right] \quad
\mathrm{and} \quad
\mathcal{M}_2=\left[\begin{array}{ll}\frac{\partial\Phi_{p,2}}{\partial \lambda}&\frac{\partial\Phi_{p,2}}{\partial a}\vspace{0.2cm}\\
\frac{\partial\Phi_{p,3}}{\partial \lambda}&\frac{\partial\Phi_{p,3}}{\partial a}\\
\end{array}\right].
\end{align*}
By using (\ref{6.8}) we conclude that
\begin{align*}
N(\mathcal{M})=N(\mathcal{M}_1)+N(\mathcal{M}_2),
\end{align*}
or equivalently
\begin{align*}
\mathrm{sgn}\bigr(\det( \mathcal{M})\bigr)&=\mathrm{sgn}\bigr(\det( \mathcal{M}_1)\bigr)\;\;\mathrm{sgn}\bigr(\det( \mathcal{M}_2)\bigr) \\
&= \mathrm{sgn}\bigr(\det( \mathcal{M}_1)\bigr)\;\;\mathrm{sgn}\Big(\frac{\partial \Phi_{p,2}}{\partial\lambda}\Big)\;\;\mathrm{sgn}
\Big(\frac{\partial \Phi_{p,3}}{\partial a}\Big).
\end{align*}
Therefore,
\begin{align}\label{gr}
\deg\Big(\Phi_p;S_1^*(p,w)\times &S_{\rho_2}(p,w),0\Big)=(-1)^{N(\mathcal{M}_1)}\;\;\mathrm{sgn}\Big(\frac{\partial \Phi_{p,2}}{\partial\lambda}\Big)\;\;\mathrm{sgn}
\Big(\frac{\partial \Phi_{p,3}}{\partial a}\Big).
\end{align}

\medskip

\noindent First, by its definition it is easy to see that
$$
\mathrm{sgn}
\Big(\frac{\partial \Phi_{p,3}}{\partial a}\Big)=1.
$$
To compute $\frac{\partial\Phi_{p,2}}{\partial\lambda}$, recall that we are considering $q=p$. We have
$$\frac{\partial\Phi_{p,2}}{\partial\lambda}=-\frac{16\pi}{\rho_1h(p)}\Delta H(p)\lambda \,e^{-\lambda}+O(e^{-\lambda}).$$
Thus, by \eqref{6.10} we deduce
\begin{align*}
\frac{\partial\Phi_{p,2}}{\partial\lambda}=-(\rho_1-8\pi)+O(e^{-\lambda}).
\end{align*}

\medskip

\noindent Up to now we got from \eqref{gr}
\begin{align*}
\deg\Big(\Phi_p;S_1^*(p,w)\times &S_{\rho_2}(p,w),0\Big)=-\mathrm{sgn}(\rho_1-8\pi)\,(-1)^{N(\mathcal{M}_1)}.
\end{align*}It remains to compute $N(\mathcal{M}_1).$ One has $\frac{\partial\Phi_{p,1}}{\partial \psi}[\Psi]=\nabla\Psi(p)$ and
\begin{align*}
\frac{\partial\Phi_{p,4}}{\partial \psi}[\Psi]=\Psi-(-\Delta)^{-1}\left(\left(\rho_2\frac{h_2e^{w-8\pi G(x,p)}}
{\int_Mh_2e^{w-8\pi G(x,p)}}\right)\Psi-\rho_2\frac{h_2e^{w-8\pi G(x,p)}}
{\big(\int_Mh_2e^{w-8\pi G(x,p)}\big)^2}
\int_M\big(h_2e^{w-8\pi G(x,p)}\Psi\big)\right).
\end{align*}
Therefore, we deduce
\begin{align}
\label{6.11}
\mathcal{M}_1 \left(\begin{array}{l}
\nu\\
\Psi
\end{array}
\right) =
\Big[\frac{\partial(\Phi_{p,1},\Phi_{p,4})}{\partial(p,\psi)}\Big]
\left(\begin{array}{l}
\nu\\
\Psi
\end{array}
\right)
=\left(\begin{array}{l}
-\nabla^2H(p)\cdot\nu+\nabla\Psi(p)\\
\quad\quad\quad-\mathcal{I}_0
\end{array}\right),
\end{align}
where
\begin{align*}
\mathcal{I}_0=&-\Psi+(-\Delta)^{-1}\left(\rho_2\frac{h_2e^{w-8\pi G(x,p)}}
{\int_Mh_2e^{w-8\pi G(x,p)}}\Psi-\rho_2\frac{h_2e^{w-8\pi G(x,p)}}
{\big(\int_Mh_2e^{w-8\pi G(x,p)}\big)^2}\int_M\big(h_2e^{w-8\pi G(x,p)}\Psi\big) \right.\\
&-8\pi\rho_2\frac{h_2e^{w-8\pi G(x,p)}}
{\int_Mh_2e^{w-8\pi G(x,p)}}\big(\nabla G(x,p)\cdot \nu\big)\\
&+8\pi\rho_2\frac{h_2e^{w-8\pi G(x,p)}}
{\big(\int_Mh_2e^{w-8\pi G(x,p)}\big)^2}
\int_M\Big[h_2e^{w-8\pi G(x,p)}\big(\nabla G(x,p)\cdot \nu\big)\Big]\Biggr).
\end{align*}
We observe that \eqref{6.11} coincides with the eigenvalue problem of the linearized equation of (\ref{sy}) around the solution $(p,w)$. Thus, we get that ${N(\mathcal{M}_1)}$ is exactly the number of the negative eigenvalues of the linearized equation of (\ref{sy}), namely $(-1)^{N(\mathcal{M}_1)}=d_S(p,w)$, the degree of the shadow system \eqref{sy} contributed by the solution $(p,w)$. Therefore, we conclude that
$$
d_T(p,w)=-\mathrm{sgn}(\rho_1-8\pi)\,d_S(p,w).
$$
This concludes the proof of the Theorem~\ref{th1.4}.
\begin{flushright}
$\square$
\end{flushright}

\medskip

\noindent As a consequence, we can state the following:

\medskip

\noindent {\em Proof of Theorem \ref{bubb}.} Theorem \ref{bubb} follows from the Theorem \ref{th1.4}, see also the discussion in the Introduction and at the beginning of the Section \ref{sec:operator}.
\begin{flushright}
$\square$
\end{flushright}

\medskip

\noindent The next step is to compute the total degree of the shadow system (\ref{sy}). The strategy will be to decouple the system \eqref{sy} and then to use Theorem A to get the degree of the first equation in \eqref{sy}. In order to decouple the system and to simplify the problem we introduce the following deformation:
\begin{equation}
\label{6.12}
(S_\mathfrak{t})
\left\{\begin{array}{l}
\Delta w+\rho_2\left(\frac{h_2e^{w-8\pi G(x,p)}}
{\int_Mh_2e^{w-8\pi G(x,p)}}-1\right)=0, \vspace{0.2cm}\\
\nabla\Big(\log(h_1e^{-w\cdot(1-\mathfrak{t})})+4\pi R(x,x)\Big)_{|{x=p}}=0,
\end{array}\right. \qquad \mathfrak{t}\in[0,1].
\end{equation}
Clearly, we are starting from the system defined in \eqref{s} and we end up with a decoupled system. During the deformation
from $(S_1)$ to $(S_0)$ we have the following result, which will be then used in the degree analysis.
\begin{lemma}
\label{le6.2}
Let $\rho_2\notin8\pi\mathbb{N}$. Then there exists a uniform constant $C_{\rho_2}$ such that for all solutions to (\ref{6.12})
we have $|w|_{L^{\infty}(M)}<C_{\rho_2}.$
\end{lemma}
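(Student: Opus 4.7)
The strategy is to reduce the first equation of $(S_\mathfrak{t})$ to a classical singular mean field equation of the form \eqref{eq2} and to invoke the established quantization theory. First, observe that the parameter $\mathfrak{t}$ appears only in the second (finite-dimensional) equation of $(S_\mathfrak{t})$, hence it plays no role for the bound on $w$. Thus it suffices to show that solutions of
$$
\Delta w + \rho_2 \left(\frac{h_2\, e^{w - 8\pi G(x,p)}}{\int_M h_2\, e^{w - 8\pi G(x,p)}} - 1\right) = 0, \quad \int_M w = 0,
$$
admit an $L^\infty$ bound depending only on $\rho_2$ and $h_2$, and not on $p \in M$.

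The key reduction is the substitution $\tilde w = w - 8\pi G(\cdot, p)$. Since $\int_M G(\cdot, p) = 0$ we have $\int_M \tilde w = 0$, and using $-\Delta G(\cdot, p) = \delta_p - 1$ a direct computation shows that
$$
-\Delta \tilde w = \rho_2 \left(\frac{h_2\, e^{\tilde w}}{\int_M h_2\, e^{\tilde w}} - 1\right) - 8\pi(\delta_p - 1),
$$
which is exactly equation \eqref{eq2} with $S = \{p\}$ and $\alpha_p = 2$. The associated critical set \eqref{Sigma} is contained in $8\pi\mathbb{N}$. Since $\rho_2 \notin 8\pi\mathbb{N}$, the quantization and concentration-compactness results of \cite{bt, bm, l, ls} (see the discussion following \eqref{Sigma}) rule out blow-up of $\tilde w$ and yield, for each fixed $p$, a uniform $C^{2,\beta}$ bound on the regular part of $\tilde w$. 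Because $\alpha_p = 2$ matches the coefficient $8\pi$ in front of $G$, the singular parts of $\tilde w$ and $8\pi G(\cdot,p)$ cancel exactly in $w = \tilde w + 8\pi G(\cdot, p)$, and we obtain an $L^\infty$ bound on $w$.

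Uniformity in $p$ follows by a standard compactness-contradiction argument. Assume there were a sequence $(p_k, w_k)$ of solutions with $\|w_k\|_\infty \to \infty$. By compactness of $M$ we could extract $p_k \to p_\infty$, and the corresponding $\tilde w_k$ would form a blow-up sequence for the family of singular mean field equations with source $-8\pi(\delta_{p_k} - 1)$. The local blow-up masses would stay quantized: $8\pi$ at any regular blow-up point, and $8\pi(1 + \alpha_{p_\infty}) = 24\pi$ if blow-up occurred at the (possibly moving) singular point $p_\infty$. Summing over blow-up points, the total mass would lie in $8\pi\mathbb{N}$, contradicting $\rho_2 \notin 8\pi\mathbb{N}$. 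The main technical point is verifying that the classical blow-up analysis adapts uniformly to this moving-source family, which reduces to continuity of $G(\cdot, p)$ in $p$ away from the diagonal together with a standard diagonal extraction argument.
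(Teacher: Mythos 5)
Your proof is correct and follows essentially the same route as the paper's: reduce the $w$-equation of $(S_{\mathfrak{t}})$ (noting that $\mathfrak{t}$ does not enter it) to the singular mean field equation \eqref{eq2} with $S=\{p\}$, $\alpha_p=2$, and invoke the Bartolucci--Tarantello quantization/compactness theory together with elliptic estimates. Your explicit substitution $\tilde w = w - 8\pi G(\cdot,p)$ and your compactness argument for uniformity in $p$ simply spell out what the paper's one-line citation of \cite{bt} leaves implicit.
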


\begin{proof}
Since $\rho_2\notin8\pi\mathbb{N}$, by classic results concerning the blow up analysis of equation \eqref{eq2}, see \cite{bt}, any solution of
\begin{equation}
\label{6.13}
\Delta w+\rho_2\left(\frac{h_2e^{w-8\pi G(x,p)}}{\int_Mh_2e^{w-8\pi G(x,p)}}-1\right)=0
\end{equation}
is uniformly bounded above. The proof of the lemma follows then by using classical elliptic estimates.
\end{proof}

\medskip

\noindent{\em Proof of Theorem \ref{th1.5}.} Since the topological degree is independent of $h_1$ and $h_2$, by the Theorem \ref{th2.1.2} we can always choose $h_1$ and $h_2$ such that the solutions to the shadow system (\ref{sy}) are non-degenerate.

Let $d_{sy}$ denote the Leray-Schauder degree for (\ref{sy}). By Lemma \ref{le6.2} and the invariance of the degree, we have just to compute the topological degree of (\ref{6.12}) when $\mathfrak{t}=1,$ namely
\begin{align}
\label{6.14}
\left\{\begin{array}{l}
\Delta w+\rho_2\left(\frac{h_2e^{w-8\pi G(x,p)}}{\int_Mh_2e^{w-8\pi G(x,p)}}-1\right)=0,\vspace{0.2cm}\\
\nabla\bigr(\log h_1+4\pi R(x,x)\bigr)_{|x=p}=0.
\end{array}\right.
\end{align}
Since this is a decoupled system, the topological degree is given by the product of the degree of first equation and the degree contributed by the second equation. By the Poincare-Hopf Theorem, the degree of the second equation is simply $\chi(M)$, i.e. the Euler characteristic of $M$. On the other hand, using Theorem A with $|S|=1$ and $\alpha_q=2$ (see also Remark \ref{remark}), the topological degree for the first equation is $b_k+b_{k-1}+b_{k-2},$ where $b_{k}$ is given (\ref{b_k}). Therefore,
\begin{align}
\label{6.15}
d_{sy}=\chi(M)\bigr(b_k+b_{k-1}+b_{k-2}\bigr).
\end{align}
This concludes the proof of Theorem \ref{th1.5}.
\begin{flushright}
$\square$
\end{flushright}

\

\noindent Finally, we are now in position to prove the main Theorem \ref{th1.6}.

\

\noindent {\em Proof of Theorem \ref{th1.6}.} Theorem \ref{th1.6} is a consequence of Theorems \ref{th1.4}, \ref{th1.5}, \ref{th4.1} and Theorem A.

Firs of all, in order to apply these results, since the topological degree is independent of $h_1$ and $h_2$, by Remark \ref{rem:h} we can always choose $h_1$ and $h_2$ such that both the solutions to the shadow system (\ref{sy}) are non-degenerate and $l(p)\neq 0$, where $l(p)$ is given in \eqref{l(p)}.

\medskip

Using the notation introduced in \eqref{not} we have to prove that
$$
d_{SG}(2) = b_k-\chi(M)\bigr(b_k+b_{k-1}+b_{k-2}\bigr).
$$
As discussed in the Introduction, we know that
$$
	d_{SG}(2) = d_{SG}(1) ~+ ~\Bigr\{\mbox{degree of the blow up solutions for }\rho_1~\mathrm{crosses}~8\pi\Bigr\}.
$$
Since the degree of the bounded solutions stays constant when $\rho_1$ crosses $8\pi$, the degree jump is due to the blow up solutions for $\rho_1=8\pi$ in the following way:
\begin{align}\label{jump}
\begin{split}
	d_{SG}(1) -d_- =d_{SG}(2) - d_+,
\end{split}
\end{align}
where $d_-,d_+$ stands for the degree contributed by the bubbling solutions when $\rho_1\rightarrow8\pi^{-}$ and $\rho_1\rightarrow8\pi^+$ respectively. By Theorem \ref{th4.1} we know that all the blow up solutions are contained in the set $S_{\rho_1}(p,w)\times S_{\rho_2}(p,w)$ for some solution $(p,w)$ of (\ref{sy}) such that $l(p)\neq 0$. Moreover, the degree of each of these blow up solutions is given by Theorem \ref{th1.4}. Furthermore, by Remark \ref{rem:segno} we know that
$$
	\mbox{sgn}(\rho_{1}-8\pi) = \mbox{sgn}(l(p)).
$$
Therefore, from \eqref{jump} we deduce
$$
d_{SG}(1) - \sum_{l(p)<0} d_S(p,w) = d_{SG}(2) + \sum_{l(p)>0} d_S(p,w),
$$
hence the jump is given by
$$
d_{SG}(2)-d_{SG}(1)= -\sum_{l(p)\neq 0} d_S(p,w) = -d_S,
$$
where $d_S$ is the total degree of the shadow system \eqref{sy}. By Theorem \ref{th1.5} we get
$$
d_{SG}(2)-d_{SG}(1)=\chi(M) \bigr(b_k+b_{k-1}+b_{k-2}\bigr),
$$
where $b_k$ is defined in \eqref{b_k}. Since by Theorem A $d_{SG}(1)=b_k$, the proof of Theorem \ref{th1.6} is concluded.
\begin{flushright}
$\square$
\end{flushright}

\vspace{1cm}
\section{Appendix: proof of Proposition \ref{le5.2}}
In this section we will give the proof of Proposition \ref{le5.2} which is based on the decomposition of $\Delta\big(v_1+T_1(v_1,v_2)\big)$ in (\ref{5.13})-(\ref{5.15}). We follow here \cite{cl1, lwy}. Let
\begin{align*}
\overline{v}:=&\frac{\int_{B_{r_0}(0)}\frac{e^{\lambda}}{(1+e^{\lambda}|y|^{2})^2}v(y)\,\mathrm{d}y}
{\int_{\mathbb{R}^2}\frac{e^{\lambda}}{(1+e^{\lambda}|y|^{2})^2}\,\mathrm{d}y}
=\frac{1}{\pi}\int_{B_{r_0}(0)}\frac{e^{\lambda}}{(1+e^{\lambda}|y|^{2})^2}v(y)\,\mathrm{d}y.
\end{align*}
We start by pointing out the following Poincare-type inequality:
\begin{align}
\label{7.1}
\int_{B_{r_0}(0)}\frac{e^{\lambda}}{(1+e^{\lambda}|y|^{2})^2}\phi^2(y)\,\mathrm{d}y
\leq c\bigr(\|\phi\|_{H^1(B_{r_0}(0))}^2+\overline{\phi}^2\bigr),
\end{align}
for some constant $c=c(r_0)$ independent of $\lambda$.

\bigskip

Concerning the part which contains $E$, see \eqref{5.11} we let $\varepsilon_2>0$ be small, which will be chosen later. Write
$$E=E^++E^-,$$
with
\begin{align*}
E^+=\left\{\begin{array}{ll}
E~&\mathrm{if}~|\varphi|\geq\varepsilon_2,\\
0~&\mathrm{if}~|\varphi|<\varepsilon_2,
\end{array}\right.
\quad\mathrm{and}\quad
E^-=\left\{\begin{array}{ll}
0~&\mathrm{if}~|\varphi|\geq\varepsilon_2,\\
E~&\mathrm{if}~|\varphi|<\varepsilon_2.
\end{array}\right.
\end{align*}
As $\lambda\rightarrow\infty,$ we have
$$E^+=O(e^{|\varphi|+\lambda})$$
and
$$E^-=\rho_1h(q)\,e^{U}(O(\varphi^2)+O(\beta^2)).$$

\bigskip

Recall now $v_1 \in S_{\rho_1}(p,w)$ (see \eqref{4.14}) is in the form $v_1=v_{q,\lambda,a}+\phi,$ $\phi\in O_{q,\lambda}^{(1)}$ (see \eqref{4.9}). We have the following result.

\begin{lemma} (\cite{cl1})
\label{le7.1}
Let $U(x)$ and $\sigma$ be defined as in (\ref{4.3}) and \eqref{sigma}, respectively. Assume $\phi\in O_{q,\lambda}^{(1)}$. Then there is a constant $c$ and $\epsilon>0$ such that for large $\lambda$ it holds
\begin{equation}
\label{7.2}
\int_{B_{r_0}(q)}e^{U}\phi\,\mathrm{d}y=O(\lambda^2e^{-\lambda}\|\phi\|_{H^1}),
\end{equation}
and
\begin{equation}
\label{7.3}
\int_M\Big(|\nabla\phi|^2-\rho_1h(q)\,e^U\sigma(x)\phi^2\Big)\geq c\int_M|\nabla\phi|^2.
\end{equation}
\end{lemma}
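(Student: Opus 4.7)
Both estimates are the bubble-orthogonality bounds of Chen--Lin \cite{cl1}; I indicate the strategy I would follow.

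For the first estimate, I would start from the orthogonality $\int_M \nabla\phi\cdot\nabla v_q = 0$ built into $O^{(1)}_{q,\lambda}$, integrate by parts, and expand $-\Delta v_q$ piecewise by means of the defining equations \eqref{4.4}, \eqref{4.7}. In $B_{r_0}(q)$ the definition \eqref{4.8} gives $-\Delta v_q = \rho_1 h(q)\,e^{U}\bigr(1 + \eta + J\bigr)$; on $M\setminus B_{2r_0}(q)$ one has $v_q = 8\pi G(x,q)$, so $-\Delta v_q = -8\pi$; and on the annulus there are cutoff corrections that are uniformly bounded. Since $\phi\in\mathring{H}^1$, the constant piece integrates out. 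The outer and annular regions contribute $O(e^{-\lambda}\|\phi\|_{H^1})$ since $e^U = O(e^{-\lambda})$ there; the inner error is controlled by
$$
\Big|\int_{B_{r_0}(q)} e^{U}\phi\,(\eta+J)\Big| \leq C\lambda^2 e^{-\lambda}\Big(\int_{B_{r_0}(q)} e^U \phi^2\Big)^{1/2}\Big(\int_{B_{r_0}(q)} e^U\Big)^{1/2} = O(\lambda^2 e^{-\lambda}\|\phi\|_{H^1}),
$$
via Lemma \ref{le4.1}(ii), the pointwise expansion of $J$, the normalization $\int_M\phi=0$, and the Poincar\'e-type bound \eqref{7.1}. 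Solving for $\int_{B_{r_0}(q)} e^U\phi$ yields the claim.

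For the coercivity estimate I would argue by contradiction and bubble rescaling. Assume a sequence $\phi_n\in O^{(1)}_{q_n,\lambda_n}$ with $\|\phi_n\|_{H^1(M)}=1$, $\lambda_n\to\infty$, $q_n\to q_*$, violating the inequality. Set $R_n = (\rho_1 h(q_n)/8)^{1/2} e^{\lambda_n/2}$ and rescale by $\tilde\phi_n(y) = \phi_n(q_n + R_n^{-1}y)$. Standard elliptic regularity and the assumed near-degeneracy force $\tilde\phi_n \rightharpoonup \phi_\infty$ in $H^1_{\mathrm{loc}}(\R^2)$, where $\phi_\infty$ solves
$$
-\Delta\phi_\infty = \frac{8}{(1+|y|^2)^2}\,\phi_\infty \quad \text{on } \R^2, \qquad \int_{\R^2}\frac{\phi_\infty^2}{(1+|y|^2)^2}\,dy < \infty.
$$
By the classification of the kernel of the linearized Liouville operator, $\phi_\infty$ lies in the three-dimensional span of $\frac{1-|y|^2}{1+|y|^2}$ and $\frac{y_i}{1+|y|^2}$, $i=1,2$. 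Passing the three orthogonality conditions defining $O^{(1)}_{q_n,\lambda_n}$ to the scaled limit kills these three modes: the test fields $\partial_\lambda v_{q_n}$ and $\partial_{q_n} v_{q_n}$ rescale (up to lower-order corrections) precisely to the dilation and translation kernel elements, while orthogonality with $v_{q_n}$, combined with the first estimate already proved, kills the remaining ``mass'' component. Hence $\phi_\infty \equiv 0$. A strong convergence step, together with the fact that $e^U\sigma$ concentrates at $q_n$ and the outer contribution of $\phi_n$ is dominated by $\int |\nabla\phi_n|^2$ alone, then contradicts $\|\phi_n\|_{H^1}=1$.

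The main obstacle in this approach is the passage of the three orthogonality relations to the rescaled limit. One must expand $\partial_\lambda v_{q}$ and $\partial_{q} v_{q}$ carefully enough, after integration by parts against $\nabla\phi_n$ and change of variables $y = R_n(x-q_n)$, to produce the three kernel elements with errors that vanish in the limit; only then can one conclude $\phi_\infty\equiv 0$. Once this is in place, upgrading $H^1_{\mathrm{loc}}$ weak convergence to the global control needed to contradict $\|\phi_n\|_{H^1(M)}=1$ is routine and follows the scheme already used in \cite{cl1}.
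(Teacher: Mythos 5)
The paper does not actually prove Lemma \ref{le7.1}; it states it with a citation to \cite{cl1}, so there is no in-paper argument to compare against. Your sketch of \eqref{7.3} by contradiction, rescaling, and kernel classification is the standard route and is sound in outline; you also correctly identify the genuine obstacle (passing the three orthogonality relations, together with the mass relation coming from \eqref{7.2}, to the rescaled limit) and correctly note that orthogonality to $v_q$ is what excludes the near-constant negative direction of the bilinear form while $\partial_\lambda v_q,\ \partial_q v_q$ rescale to the dilation and translation kernel elements.

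There is, however, a genuine gap in your argument for \eqref{7.2}. After integrating by parts against $v_q$ and letting the $-8\pi$ piece drop out by the zero-mean condition, the key term is $\int_{B_{r_0}(q)} e^U (\eta+J)\phi$, and you bound it by pulling $\lambda^2 e^{-\lambda}$ out as a pointwise bound for $|\eta+J|$. That bound is false for $J$: by definition $J(x)=(H(x)-\nabla H(q)\cdot(x-q))\sigma(x)=O(|x-q|^2)$, so $|J|\sim r_0^2$ near $\partial B_{r_0}(q)$, and $e^U|J|$ is of order $1$ (not small) on the shell $|x-q|\sim e^{-\lambda/2}$. A direct Cauchy--Schwarz or H\"older estimate of $\int e^U J\phi$ only produces $O(e^{-\lambda/2}\|\phi\|_{H^1})$, much weaker than the claimed $O(\lambda^2 e^{-\lambda}\|\phi\|_{H^1})$, which is the rate actually used downstream (see \eqref{7.5} and \eqref{7.22}). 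The missing idea is to use the defining equation \eqref{4.7} to replace $\rho_1 h(q)\,e^U(\eta+J)$ by $-\Delta\eta$, so that
\begin{align*}
\rho_1 h(q)\int_{B_{r_0}(q)} e^U(\eta+J)\phi
= -\int_{B_{r_0}(q)} (\Delta\eta)\,\phi
= \int_{B_{r_0}(q)} \nabla\eta\cdot\nabla\phi - \int_{\partial B_{r_0}(q)} \phi\,\partial_n\eta,
\end{align*}
and then the pointwise gradient bound $|\nabla_x\eta|=O(\lambda^2 e^{-\lambda})$ from Lemma \ref{le4.1}(ii), together with a trace bound for $\phi$ on $\partial B_{r_0}(q)$, gives the sharp $O(\lambda^2 e^{-\lambda}\|\phi\|_{H^1})$. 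You cite Lemma \ref{le4.1}(ii), but apply it to the wrong quantity ($\eta+J$ in $L^\infty$ rather than $\nabla\eta$ after integration by parts); without the integration-by-parts step the advertised rate does not follow.
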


\vspace{0.5cm}

\noindent{\em Proof of Proposition \ref{le5.2}}. The proof is based on the proof of \cite[Lemma 4.2]{cl1} and \cite[Lemma 4.4]{lwy} and we shall sketch the process here. We start with part (1). Let $\phi\in O_{q,\lambda}^{(1)}$ and $\psi\in O_{q,\lambda}^{(2)}.$ Recall $v_1=v_{q,\lambda,a}+\phi,$ $\phi\in O_{q,\lambda}^{(1)}$ and $v_2=w+\psi,$ $\psi\in O_{q,\lambda}^{(2)}.$ In order to get the estimates we consider
$$\l\nabla\big(v_1+T_1(v_1,v_2)\big),\nabla \phi_1\rangle=-\l\Delta\big(v_1+T_1(v_1,v_2)\big),\phi_1\rangle.$$
Recall the decomposition of $\Delta\big(v_1+T_1(v_1,v_2)\big)$ in (\ref{5.13})-(\ref{5.15}). We write
\begin{align}
\label{7.4}
\l\nabla\big(v_1+T_1(v_1,v_2)\big),\nabla\phi_1\r=&\int\nabla\phi\cdot\nabla\phi_1
-\int_{B_{r_0}(q)}\rho_1h(p)\,e^{U}\phi\,\phi_1+\mathrm{remainder~terms}
\nonumber\\
:=&\mathfrak{B}(\phi,\phi_1)+\mathrm{remainder~terms}.
\end{align}
Clearly, $\mathfrak{B}$ is a symmetric bilinear form in $O_{q,\lambda}^{(1)}$ and by the second part of the Lemma \ref{le7.1}, $\mathfrak{B}(\phi,\phi)\geq c_0\|\phi\|_{H^1(M)}^2$ for some $c_0>0.$
For the remainder terms, recalling that $\int_M \phi = 0$ and by the first part of the Lemma \ref{le7.1} we deduce
\begin{align*}
\int_M\big(8\pi(a-1)+(8\pi-\rho_1)\big)\phi_1=0,
\end{align*}
\begin{align}
\label{7.5}
\Big|\int_{B_{r_0}(q)}\rho_1h(p)e^{U}\phi_1\Big|=O(\lambda^2e^{-\lambda})\|\phi_1\|_{H^1(M)}.
\end{align}
Still by using Lemma \ref{le7.1} and by \eqref{5.8} we conclude that the term concerned with $\frac{e^t}{\int_M h_1 e^{v_1-v_2}}-1$ is small. Using $|\nabla H(q)|\leq C\lambda \,e^{-\lambda}$ for $v_1\in S_{\rho_1}(p,w)$ we obtain
\begin{align}
\label{7.6}
\int_{B_{r_0}(q)}\nabla H\cdot (x-q)\rho_1h(q)\,e^{U}\phi_1=O(\lambda \,e^{-\lambda})\|\phi_1\|_{H^1(M)}.
\end{align}
By Lemma \ref{le7.1}, we have
\begin{align}
\label{7.7}
\int_{B_{r_0}(q)}\rho_1h(q)\,e^{U}(a-1)(U+s-1)\phi_1
=O(|a-1|)\|\phi_1\|_{H^1(M)}=O(\lambda \,e^{-\lambda})\|\phi_1\|_{H^1(M)}.
\end{align}

For $E^+$ and $E^-$ we obtain
\begin{align}
\label{7.8}
\int_{B_{r_0}(q)}|E^+\phi_1|\leq~\left(\int_{B_{r_0}(p)}|E^+|^2\right)^{\frac12}
\left(\int_{B_{r_0}(q)}\phi_1^2\right)^{\frac{1}{2}}=~O(\lambda \,e^{-\lambda})\|\phi_1\|_{H^1(M)},
\end{align}
and
\begin{align}
\label{7.9}
\int_{B_{r_0}(q)}|E^-\phi_1|\leq&\int_{B_{r_0}(q)}\rho_1h(q)\,e^{U}(O(\varphi^2)+O(\beta^2))\phi_1\nonumber\\
=&~O(\varepsilon_2)(\|\phi\|_{H^1(M)}+\lambda \,e^{-\lambda})\|\phi_1\|_{H^1(M)}+O\left(\frac{\lambda^3}{e^{2\lambda}}\right)
\|\phi_1\|_{H^1(M)},
\end{align}
provided $\varepsilon_2$ is small.

For the term which involves $\psi$, we have
\begin{align}
\label{7.10}
\Big|\int_{B_{r_0}(q)}\rho_1h(q)\,e^{U}\phi_1\psi\Big|=&~\Big|\int_{B_{r_0}(q)}\rho_1h(q)\,e^{U}\phi_1(\psi-\psi(q))\Big|+
\Big|\int_{B_{r_0}(q)}\rho_1h(q)\,e^{U}\phi_1\psi(q)\Big|\nonumber\\
=&~O(\lambda \,e^{-\lambda})\|\phi_1\|_{H^1(M)}.
\end{align}

\medskip

\noindent We consider now the terms in $M\setminus B_{r_0}(q)$. By \cite[Lemma 2.2]{cl1} we get
\begin{align}
\label{7.11}
\int_{B_{2r_0}(q)\setminus B_{r_0}(q)}\Delta(v_q-8\pi G(x,q))\phi_1=O\left(\frac{\lambda}{e^{\lambda}}\right)\|\phi_1\|_{H^1(M)}.
\end{align}
For the nonlinear term in $\Delta T_1(v_1,v_2)$ on $M\setminus B_{r_0}(q)$ we have
$$\int_{M\setminus B_{r_0}(q)}|e^{\varphi}\phi_1|=O\left(\int_{|\varphi|\geq\varepsilon_2}|e^{\varphi}\phi_1|
+\int_{|\varphi|\leq\varepsilon_2}|e^{\varepsilon_2}\phi_1|\right)=O(1)\|\phi_1\|_{H^1(M)}.$$
Because $\int_{M}h_1e^{v_1-v_2}\sim e^{\lambda},$ we deduce
\begin{align}
\label{7.12}
\int_{M\setminus B_{r_0}(q)}\frac{\rho_1h_1e^{v_1-v_2}}{\int_{M}h_1e^{v_1-v_2}}|\phi_1|=O(e^{-\lambda})
\int_{M\setminus B_{r_0}(q)}e^{\varphi}|\phi_1|=O(e^{-\lambda})\|\phi_1\|_{H^1(M)}.
\end{align}
It is easy to see that the remaining parts in $M\setminus B_{r_0}(q)$ are small.

Combining (\ref{7.4})-(\ref{7.12}), we get
\begin{align*}
\l\nabla(v_1+T_1(v_1,v_2)),\nabla\phi_1\r=\l\nabla\phi,\nabla\phi_1\r-\int_{B_{r_0}(q)}\rho_1h(q)e^{U}\phi\phi_1+O(\lambda \,e^{-\lambda})
\|\phi_1\|_{H^1(M)}.
\end{align*}

\bigskip

We prove now part (3). First, we note that by the definition of $v_q$, see \eqref{4.8}, and by Lemma \ref{le4.1}:
\begin{align}
\label{7.13}
\partial_{\lambda}v_q=\left(2-\frac{\frac{\rho_1h(q)}{4}e^{\lambda}|x-q|^2}{1+\frac{\rho_1h(q)}{8}e^{\lambda}|x-q|^2}+
\partial_{\lambda}\left[\eta+\frac{2\Delta H(q)}{\rho_1h(q)}\lambda^2e^{-\lambda}\right]\right)\sigma=(1+\partial_{\lambda}U)\sigma+O(\lambda^2e^{-\lambda}).
\end{align}
Since $\phi\in O_{q,\lambda}^{(1)},$ we have $\int_{M}\nabla\phi\cdot\nabla\partial_{\lambda}v_q=0$. It is not difficult to get
\begin{align}
\label{7.14}
\int_{B_{r_0}(q)}\partial_{\lambda}v_q=\int_{B_{r_0}(q)}(1+\partial_{\lambda}U)+O(\lambda^2e^{-\lambda})
=O(\lambda^2e^{-\lambda}).
\end{align}
Then
\begin{align}
\label{7.15}
\big(8\pi(a-1)+8\pi-\rho_1\big)\int_{B_{r_0}(q)}\partial_{\lambda}v_q=O(\lambda^3e^{-2\lambda}).
\end{align}
Again by (\ref{7.13}), we have
\begin{align}
\label{7.16}
\int_{B_{r_0}(q)}\rho_1h(q)\,e^{U}\partial_{\lambda}v_q=8\pi+O(\lambda^2e^{-\lambda})
\end{align}
and
\begin{align}
\label{7.17}
\int_{B_{r_0}(q)}&\rho_1h(q)\,e^{U}\left[-2\log\left(1+\frac{\rho_1h(q)}{8}e^{\lambda}|x-q|^2\right)\right]
\partial_{\lambda}v_q=-8\pi+O\left(\frac{\lambda^2}{e^{\lambda}}\right).
\end{align}
Combining (\ref{7.16}) and (\ref{7.17}) we deduce
\begin{align}
\label{7.18}
\int_{B_{r_0}(q)}&\rho_1h(q)\,e^{U}(U+s-1)\partial_{\lambda}v_q
=16\pi\lambda-16\pi+16\pi\log\frac{\rho_1h(q)}{8}+64\pi^2R(q,q)+O(\lambda \,e^{-\lambda}).
\end{align}
Using a scaling argument it is possible to show that
\begin{align}
\label{7.19}
\int_{B_{r_0}(q)}|a-1|\rho_1h(q)\,e^{U}O(|x-q|)\partial_{\lambda}v_q=O(e^{-\frac12\lambda})|a-1|=O(\lambda \,e^{-\frac32\lambda}).
\end{align}
and
\begin{align}
\label{7.20}
\int_{B_{r_0}(q)}&\rho_1h(q)\,e^{U}\nabla H(q)\cdot(x-q)\partial_{\lambda}v_q
=O\left(\frac{\lambda^2}{e^{\lambda}}\right)\int_{B_{r_0}(q)}e^{U}|x-q|=O(\lambda^2e^{-\frac32\lambda}),
\end{align}
where we have used that $\nabla H(q)\cdot(x-q)$ is an odd function.

Next, we estimate the term $\phi\partial_{\lambda}v_q$ and $\psi\partial_{\lambda}v_q$. By notice that
\begin{align}
\label{7.21}
0=\int_{M}\nabla\phi\cdot\nabla\partial_{\lambda}v_q=&-\int_{M}\phi\Delta(\partial_{\lambda}v_q)
=\rho_1h(q)\int_{B_{r_0}(q)}e^{U}\phi\,\partial_{\lambda}U+O(\lambda \,e^{-\lambda}\|\phi\|_{H^1(M)}).
\end{align}
Hence, by Lemma \ref{le7.1} and (\ref{7.21}), we have
\begin{align}
\label{7.22}
\int_{B_{r_0}(q)}\rho_1h(q)e^{U}\phi\,\partial_{\lambda}v_q=O(\lambda^2e^{-\lambda})\|\phi\|_{H^1(M)}=O(\lambda^3e^{-2\lambda}),
\end{align}
and
\begin{align}
\label{7.23}
\int_{B_{r_0}(q)}\rho_1h(q)\,e^{U}\psi\partial_{\lambda}v_q=8\pi\psi(q)+O(\lambda e^{-\frac32\lambda}).
\end{align}
By (\ref{7.13}), Lemma \ref{le7.1} and the Moser-Trudinger inequality,
\begin{align}
\label{7.24}
\int_{B_{r_0}(q)}|E^+\partial_{\lambda}v_q|\leq O(e^{-2\lambda})~\mathrm{and}~\int_{B_{r_0}(q)}|E^-\partial_{\lambda}v_q|=O(\lambda^3e^{-2\lambda}).
\end{align}

\medskip

Consider now $M\setminus B_{r_0}(q)$ and observe that by definition $\partial_{\lambda}v_q=0$ on $M\setminus B_{2r_0}(q)$. On the other hand in $B_{r_0}(q)\setminus B_{r_0}(q)$ we have the following estimates:
$$e^{v_1-v_2}=O(e^{\phi}), \quad \frac{\rho_1h_1e^{v_1-v_2}}{\int_{M}h_1e^{v_1-v_2}}=O(e^{-\lambda})\,e^{\phi},
\quad \partial_{\lambda}v_q=O\left(\frac{\lambda^2}{e^{\lambda}}\right).$$
By the Moser-Trudinger inequality,
\begin{align}
\label{7.25}
\int_{M\setminus B_{r_0(q)}}\frac{\rho_1h_1e^{v_1-v_2}}{\int_{M}h_1e^{v_1-v_2}}\partial_{\lambda}v_q
=O(\lambda^3e^{-2\lambda}).
\end{align}
By \cite[Lemma 2.2]{cl1} and $\partial_{\lambda}v_q=O(\lambda^2e^{-\lambda}),$ we have
\begin{align}
\label{7.26}
\int_{B_{2r_0}(q)\setminus B_{r_0}(q)}\Delta(v_q-\overline{v}_q-8\pi G(x,q))\cdot\partial_{\lambda}v_q=O(\lambda^3e^{-2\lambda}).
\end{align}
It is easy to see that the remaining parts in $M\setminus B_{r_0}(q)$ are small.

\medskip

Combining (\ref{7.13}) to (\ref{7.26}), we obtain
\begin{align}
\label{7.27}
\l\nabla(v_1+T_1(v_1,v_2)),\nabla\partial_{\lambda}v_q\r
=&-(a-1)\Big{(}16\pi\lambda-16\pi+16\pi\log\frac{\rho_1h(q)}{4}+64\pi^2R(q,q)\Big{)}\nonumber\\
&-8\pi\Big(\frac{e^{t}}{\int_{M}h_1e^{v_1-v_2}}-1\Big)+8\pi\psi(q)+O(\lambda e^{-\frac32\lambda}).
\end{align}
This proves part (3).

\bigskip

We consider now part (4). We start by observing that
\begin{align*}
\l\nabla(v_1+T_1(v_1,v_2)),\nabla v_q\r=\l\nabla(v_1+T_1(v_1,v_2)),\nabla(v_q-\overline{v}_q)\r=-\l\Delta(v_1+T_1(v_1,v_2)),(v_q-\overline{v}_q)\r.
\end{align*}
Note that
\begin{equation*}
\int_M\big[8\pi(a-1)+8\pi-2\rho_1\big](v_q-\overline{v}_q)=0 \quad \mathrm{and} \quad
\int_{M}\nabla\phi\cdot\nabla(v_q-\overline{v}_q)=0.
\end{equation*}
On $B_{r_0}(p)$ we have
\begin{align}
\label{7.28}
v_q-\overline{v}_q=2\lambda-2\log(1+\frac{\rho_1h(q)}{8}\,e^{\lambda}|x-q|^2)+8\pi R(q,q)+O(|y|)+2\log\frac{\rho_1h(q)}{8}+O\left(\frac{\lambda^2}{e^{\lambda}}\right).
\end{align}
We use (\ref{7.28}) to compute $\int_{M}\rho_1h\,e^{U}(U+s-1)(v_q-\overline{v}_q)$. After a scaling argument one can show that
\begin{align}
\label{7.29}
\int_{B_{r_0}(q)}\rho_1h(q)\,e^{U}\log\left(1+\frac{\rho_1h(q)}{8}e^{\lambda}|x-q|^2\right)=
8\pi+O\left(\frac{\lambda}{e^{\lambda}}\right),
\end{align}
\begin{align}
\label{7.30}
\int_{B_{r_0}(q)}\rho_1h(q)e^{U}\Big{[}\log(1+\frac{\rho_1h(q)}{8}e^{\lambda}|x-q|^2)\Big{]}^2
=16\pi+O\left(\frac{\lambda^2}{e^{\lambda}}\right),
\end{align}
and
\begin{align}
\label{7.31}
\int_{B_{r_0}(q)}\rho_1h\,e^{U}\left[\log\left(1+\frac{\rho_1h(q)}{4}e^{\lambda}|x-q|^2\right)\right]O(|x-q|)=O(e^{-\frac12\lambda}).
\end{align}
Therefore, by (\ref{7.29})-(\ref{7.31}),
\begin{align}
\label{7.32}
\int_{B_{r_0}(q)}\rho_1h\,e^{U}(U+s-1)(v_q-\overline{v}_q)=
\Big{[}256\pi^2R(q,q)+64\pi\log\frac{\rho_1h(q)}{8}-16\pi\Big{]}\lambda+32\pi\lambda^2-64\pi\lambda+O(1).
\end{align}
Similarly, we have
\begin{align}
\label{7.33}
\int_{B_{r_0}(q)}\rho_1h(q)\,e^{U}(v_q-\overline{v}_q)=
16\pi\lambda-16\pi+64\pi^2R(q,q)+16\pi\log\frac{\rho_1h(q)}{4}+O(e^{-\frac12\lambda}),
\end{align}
and
\begin{align}
\label{7.34}
\int_{B_{r_0}(q)}\rho_1h(q)\,e^{U}O(|x-q|)(v_q-\overline{v}_q)=O(\lambda e^{-\frac12\lambda}).
\end{align}
We have $\nabla H(q)=O(\lambda \,e^{-\lambda})$. By (\ref{7.28}) and by the fact that $\nabla H(q)\cdot (x-q)$ is an odd function, we deduce
\begin{align}
\label{7.35}
\int_{B_{r_0}(q)}\rho_1h(q)\,e^{U}\nabla H(q)\cdot (x-q)(v_q-\overline{v}_q)=O(\lambda^2e^{-2\lambda}).
\end{align}
By Lemma \ref{le7.1} we get
\begin{align}
\label{7.36}
\int_{B_{r_0}(q)}\rho_1h(q)\,e^{U}\varphi(v_q-\overline{v}_q)
=&\int_{B_{r_0}(q)}\rho_1h(q)\,e^{U}\phi\left[\lambda+s-2\log\left(1+\frac{\rho_1h(q)}{8}e^{\lambda}|x-q|^2\right)+O(|x-q|)\right]
\nonumber\\
&-\int_{B_{r_0}(q)}\rho_1h(q)\,e^{U}\psi\left[\lambda+s-2\log\left(1+\frac{\rho_1h(q)}{8}e^{\lambda}|x-q|^2\right)+O(|x-q|)\right]
\nonumber\\
=~&O(1)\|\phi\|_{H^1(M)}+o(1)\|\psi\|_*-16\pi\lambda\psi(q).
\end{align}
Similarly as in the proof of part (3), we have $\int_{B_{r_0}(q)}E(v_q-\overline{v}_q)=O(\lambda^4e^{-2\lambda}).$
Since $v_q=O(1)$ on $M\setminus B_{r_0}(q),$ by \cite[Lemma 2.2]{cl1},
\begin{equation}
\label{7.37}
\int_{B_{2r_0}(q)\setminus B_{r_0}(q)}\Delta(v_q-8\pi G(x,q))(v_q-\overline{v}_q)=O(\lambda \,e^{-\lambda}).
\end{equation}

\medskip

We focus now on the integral outside $B_{r_0}(q).$ We have $(\int_{M}h_1e^{v_1-v_2})^{-1}=O(e^{-\lambda})$ and
\begin{align}
\label{7.38}
\int_{B_{2r_0}(q)\setminus B_{r_0}(q)}\frac{\rho_1h_1}{\int_{M}h_1\,e^{v_1-v_2}}e^{v_1-v_2}(v_q-\overline{v}_q)=\int_{B_{2r_0}(q)\setminus B_{r_0}(q)}O(e^{-\lambda})\,e^{\phi-\psi}=O(e^{-\lambda}).
\end{align}
Similarly we can prove
\begin{equation}
\label{7.39}
\int_{M\setminus B_{2r_0}(q)}\frac{\rho_1h_1}{\int_{M}h_1e^{v_1-v_2}}e^{v_1-v_2}(v_q-\overline{v}_q)=O(e^{-\lambda}).
\end{equation}
It is easy to see that the remaining parts in $M\setminus B_{r_0}(q)$ are small.

\medskip

By (\ref{7.32})-(\ref{7.39}), we have
\begin{align}
\label{7.40}
&\l\nabla(v_1+T_1(v_1,v_2),\nabla(v_q-\overline{v}_q)\r\nonumber\\
=~&(2\lambda-2+8\pi R(q,q)+2\log\frac{\rho_1h(q)}{8})\l\nabla(v_1+T_1(v_1,v_2)),\nabla\partial_{\lambda}v_q\r\nonumber\\
&+16\pi(a-1)\lambda+O(1)\|\phi\|_{H^1(M)}+o(1)\|\psi\|_*+O(\lambda \,e^{-\lambda}).
\end{align}

\bigskip

We conclude now with the proof of part (2). We observe that
$$\l\nabla(v_1+T_1(v_1,v_2)),\nabla \partial_{q} v_q\r=\l\nabla\partial_{q}(v_1+T_1(v_1,v_2)),\nabla(v_q-\overline{v}_q)\r.$$
Since $\phi\in O_{q,\lambda}^{(1)}$ we get $\l\nabla\phi,\nabla\partial_{q}(v_q-\overline{v}_q)\r=0.$
Moreover, using $\int_M(v_q-\overline{v}_q)=0$ we obtain $$\int_M(8\pi(a-1)+8\pi-2\rho_1)\partial_q(v_q-\overline{v}_q)=0.$$
For $x\in B_{r_0}(q),$ by \cite[Lemma 2.1]{cl1} one has
\begin{align}
\label{7.41}
\partial_{q}v_q=&-\nabla_xU+\frac{\partial_{q}h(q)}{h(q)}\partial_{\lambda}U+\partial_{q}\left(2\log h(q)+\frac{2\Delta H(q)}{\rho_1h(q)}\frac{\lambda^2}{e^{\lambda}}\right)\nonumber\\
&+8\pi\partial_{q}R(x,q)\mid_{x=q}+O(|x-q|)+O(\lambda^2e^{-\lambda}).
\end{align}
Since $\nabla_xU$ is symmetric with respect to $q$ in $B_{r_0}(q)$,
\begin{align}
\label{7.42}
\int_{B_{r_0}(q)}\rho_1h(q)\,e^{U}(U+s-1)\nabla_xU=\int_{B_{r_0}(q)}\rho_1h(q)\,e^{U}
O(|x-q|+\lambda^2e^{-\lambda})\nabla_xU=O(1).
\end{align}
Hence, noting the fact that $\partial_{\lambda}U$ is bounded, we deduce
\begin{align}
\label{7.43}
\int_{B_{r_0}(q)}\rho_1h(q)(U+s-1)\partial_{q}(v_q-\overline{v}_q)=O(\lambda).
\end{align}
For the other terms in (\ref{5.13}), it is possible to get the following estimates:
\begin{align}
\label{7.44}
\int_{B_{r_0}(q)}\rho_1h(q)\,e^{U}\partial_{q}(v_q-\overline{v}_q)=O(1),
\end{align}
\begin{align}
\label{7.45}
\int_{B_{r_0}(q)}\rho_1h(q)\,e^{U}O(|x-q|)\partial_{q}(v_q-\overline{v}_q)=O(1),
\end{align}
\begin{align}
\label{7.46}
\int_{B_{r_0}(q)}\rho_1h(q)\,e^{U}\nabla H(q)\cdot(x-q)\nabla_xU=(-8\pi+O(\lambda \,e^{-\lambda}))\nabla H(q),
\end{align}
and, using $\nabla H(q)=O(\lambda e^{-\lambda})$,
\begin{align}
\label{7.47}
\int_{B_{r_0}(q)}\rho_1h(q)\,e^{U}\nabla H(q)\cdot(x-q)\partial_{q}(v_q-\overline{v}_q)=8\pi\nabla H(q)+O(\lambda e^{-\frac32\lambda}).
\end{align}
For the term which involves $\phi$, we have
\begin{align}
\label{7.48}
\int_{B_{r_0}(q)}\rho_1h(q)\,e^{U}\phi\,\partial_{q}(v_q-\overline{v}_q)=\int_{B_{r_0}(q)}\rho_1h(q)\,e^{U}\phi\,\partial_{q}U
+\int_{B_{r_0}(q)}\rho_1h(q)\,e^{U}\phi\left(O\left(\frac{\lambda^2}{e^{\lambda}}\right)+O(|x-q|)\right).
\end{align}
Using $\l\nabla\phi,\nabla\partial_{q}(v_q-\overline{v}_q)\r=0$ one gets
\begin{align*}
0=&\int_{M}\nabla\phi\nabla\partial_{q}v_q=-\int_{M}\phi\Delta(\partial_{q}v_q)\\
=&\int_{B_{r_0}(q)}\rho_1h(q)\,e^{U}\partial_{q}U\phi+
\partial_{q}\log h(q)\int_{B_{r_0}(q)}\rho_1h(q)\,e^{U}\phi+O(\lambda^2e^{-\lambda})\|\phi\|_{H^1(M)}.
\end{align*}
By (\ref{7.2}) and the above equality, we have
\begin{align*}
\int_{B_{r_0}(q)}2\rho_1h(q)\,e^{U}\partial_{q}U\phi=O(\lambda^2e^{-\lambda})\|\phi\|_{H^1(\Omega)}.
\end{align*}
On the other hand, for the terms concerning $\frac{e^t}{\int_Mh_1e^{v_1-v_2}}-1$ and $\psi$, we obtain
\begin{align*}
\int_{B_{r_0}(q)}\rho_1h(q)\,e^{U}\left(\frac{e^t}{\int_Mh_1e^{v_1-v_2}}-1-\psi\right)\partial_q(v_q-\overline{v}_q)
=-8\pi\nabla\psi(q)+O\left(\frac{e^t}{\int_Mh_1e^{v_1-v_2}}-1-\psi(q)\right),
\end{align*}
where we used
\begin{align*}
\int_{B_{r_0}(q)}\rho_1h(q)\,e^U\nabla\psi(q)(x-q)\nabla_yU=(8\pi+O(e^{-\lambda}))\nabla\psi(q)
\end{align*}
and (\ref{7.44}). We can see that $\partial_{q}(v_q-\overline{v}_q)=O(e^{\frac12\lambda}).$ Hence, as in the proof of part (3) we have
\begin{align*}
\int_{B_{r_0}(q)}E\partial_{\lambda_q}(v_q-\overline{v}_q)=O(\lambda^3e^{-\frac32\lambda}).
\end{align*}

\medskip

We consider now $M\setminus B_{r_0}(q).$ In this case $\partial_{q}(v_q-\overline{v}_q)=O(1).$ Hence by \cite[Lemma 2.2]{cl1} we get
$$\int_{B_{2r_0}(q)\setminus B_{r_0}(q)}\Delta(v_q-8\pi G(x,q))\cdot\partial_{q}(v_q-\overline{v}_q)=O(\lambda \,e^{-\lambda}).$$
Since $\int_{M}h_1e^{v_1-v_2}=O(e^{-\lambda}),$ it is not difficult to see that the integral of the products of $\partial_{q}v_q$ and the nonlinear terms in (\ref{5.14}) and (\ref{5.15}) are of order $O(e^{-\lambda}).$

\bigskip

The estimates above imply
\begin{align}
\label{7.49}
&\l\nabla(v_1+T_1(v_1,v_2)),\nabla\partial_{q}(v_q-\overline{v}_q)\r\nonumber\\
=&-8\pi\nabla H(q)+8\pi\nabla \psi(q)+O\left(\lambda |a-1|+\left|\frac{e^{t}}{\int_{M} h_1e^{v_1-v_2}}-1-\psi(q)\right|
+\frac{\lambda}{e^{\lambda}}\right).
\end{align}
This concludes the proof of part (2) and of the proposition.
\begin{flushright}
$\square$\\
\end{flushright}

\vspace{1cm}

\end{document}